\def\?[#1]{\textbf{[#1]}\marginpar{\Large{\textbf{??}}}}
\def\smallsection#1{\smallskip\noindent\textbf{#1}.}
\newtheorem*{theorem*}{Theorem}
\newtheorem{theorem}{Theorem}
\newtheorem{prop}{Proposition}[section]
\newtheorem{defi}[prop]{Definition}
\newtheorem{hyp}{Hypothesis}
\newtheorem{lemma}[prop]{Lemma}
\newtheorem{qu}{Question}[section]
\newtheorem{claim}{Claim}[section]
\numberwithin{equation}{section}
\newcommand{\CC}{{\mathbb C}}
\newcommand{\ZZ}{{\mathbb Z}}
\newcommand*{\dd}{\mathop{}\!\mathrm{d}}
\DeclareMathOperator{\Spec}{Spec}
\DeclareMathOperator{\Op}{Op}
\DeclareMathOperator{\diag}{diag}
\DeclareMathOperator{\Vol}{Vol}
\DeclareMathOperator{\Span}{span}
\newcommand{\ip}[2]{\left  \langle#1,#2 \right \rangle}
\newcommand{\mat}[1]{\begin{pmatrix} #1 \end{pmatrix}}
\newcommand{\set}[1]{ \left \{ #1 \right  \}}
\newcommand{\N}{\mathbb{N}}
\newcommand{\p}{\partial}
\newcommand{\dbar}{\overline{\p}}
\newcommand{\Z}{\mathbb{Z}}
\newcommand{\T}{\mathbb{T}}
\newcommand{\e}{\varepsilon}
\newcommand{\R}{\mathbb{R}}
\newcommand{\norm}[1]{ \left \| #1 \right  \|  }
\newcommand{\jpb}[1]{ \left \langle #1  \right \rangle    }
\newcommand{\C}{\mathbb{C}}
\renewcommand{\phi}{\varphi}
\renewcommand{\P}{\mathbb{P}}
\renewcommand{\Re}[1]{{\rm{Re}} \left ( #1\right ) }
\renewcommand{\Im}[1]{{\rm{Im}} \left ( #1 \right ) }
\let\Im=\Imag
\let\Re=\Real
\newcommand{\abs}[1]{\left | #1 \right| }
\newcommand{\SAA}{\Theta} 
\newcommand{\SAAA}{\Theta} 
\newcommand{\edit}[1]{{#1}}
\title{Spectral Instability of Random Fredholm Operators}
\author{Simon Becker} 
\address[Simon Becker]{ETH Zurich, 
Institute for Mathematical Research, 
Rämistrasse 101, 8092 Zurich, 
Switzerland}
\email{simon.becker@math.ethz.ch}
\author{Izak Oltman} 
\address[Izak Oltman]{Department of Mathematics, Northwestern University, 2033 Sheridan Rd, Evanston, IL 60208}
\email{ioltman@northwestern.edu}
\author{Martin Vogel} 
\address[Martin Vogel]{Institut de Recherche Math{\'e}matique Avanc{\'e}e - UMR 7501, 
Universit{\'e} de Strasbourg et CNRS, 7 rue René-Descartes, 67084 Strasbourg Cedex, France.}
\email{vogel@math.unistra.fr}
\date{\today}
\begin{document}

\begin{abstract}
If $A \colon D(A) \subset \mathcal{H} \to \mathcal{H}$ is an {unbounded} Fredholm operator of index $0$ on a Hilbert space $\mathcal{H}$ with a dense domain $D(A)$, then its spectrum is either discrete or the entire complex plane. 
{This spectral dichotomy plays a central role in the study of \textit{magic angles} in twisted bilayer graphene.}

This paper proves that if {such} operators (with certain additional assumptions) are perturbed by certain random trace-class operators, their spectrum is discrete with high probability.
\end{abstract}

\maketitle

\section{Introduction}

This article describes the spectrum of random perturbations of Fredholm operators of index $0$.
If $A \colon D(A) \subset \mathcal{H} \to \mathcal{H}$ is an {unbounded}  Fredholm operator of index $0$ on a {separable} Hilbert space $\mathcal{H}$ with dense domain $D(A)$, then $\Spec(A)$ is either discrete or the entire complex plane (see Proposition \ref{prop1}).
\edit{
Here we denote $\Spec(A)$ as the spectrum of a linear operator on a Hilbert space, defined as the complement of the set of points $z\in \C$ such that $A-z$ is bijective.
}

{A striking example of this dichotomy was presented in \cite{seeley1986}, where Seeley introduced {a particularly simple} family of operators {with this property}}, defined as
\begin{align}
   A_{\text{Seeley}} \colon H^1(\R /2\pi \Z) \ni f \mapsto  a(x) \p_xf(x) + b(x) f(x) \in L^2 (\R / 2\pi \Z)\label{eq:seeleyoperator}
\end{align}
for $a,b \in C^\infty (\R /2\pi \Z)$ and $|a| > 0$. 
The spectrum of $A_{\text{Seeley}}$ is either a discrete lattice, empty, or $\C$ depending on $a$ and $b$.

{While the case where $\Spec(A) = \C$ was considered pathological, the recent study of physical models in \textit{twisted bilayer graphene} (TBG) have shown this case to be physically highly relevant.}
Indeed, this spectral dichotomy {appears} in the mathematical study of TBG in the so-called chiral limit, see \cite{tarnopolsky2019origin,Becker2020} for details (which is the main motivation for this article).
In this model, a Fredholm operator of index $0$ is defined with a parameter $\beta \in \C$.
The theory of TBG in the chiral limit defines \textit{magic} twisting angles as the $\alpha$ such that the spectrum of the operator is $\C$ (this is further described in \S \ref{sec:motiv}). 

For any Fredholm operator of index $0$, it is easy to construct an arbitrarily small perturbation to make the spectrum of such an operator discrete (by mapping the kernel to the cokernel). 
The question that this paper aims to address is 

\textit{{When is the spectrum} of a random perturbation of Fredholm operators of index zero on a Hilbert space {discrete?}}

The perturbations considered in this article are random trace-class perturbations.
These types of random perturbations appear {when} numerically {analyzing such operators}.
For example, if we restrict an operator $A$ to an $N$-dimensional vector space\footnote{For example, we could approximate $H^1(\R /2\pi \Z)$ by the first $N$ Fourier modes.} and project the image onto this same vector space, then we can easily compute the spectrum numerically.
As we increase $N$, rounding errors in mathematical software {can be modeled by} finite-rank random perturbations (see also \cite{references}).

In this paper (under certain conditions on the random perturbation), we provide a quantitative lower bound for the smallest singular value of the randomly perturbed operator (with certain probability). 
As a by-product, we see that these randomly perturbed operators have a discrete spectrum with high probability.
We then apply our findings to the TBG model (see Theorem \ref{Thm.1}).
A heuristic version of our main theorem is stated below; for a more precise version, see Theorem \ref{thm:general result}.
\begin{theorem*}[Heuristic Main Result]
Suppose $A - z$ is a Fredholm operator of index $0$ for all $z\in \Omega \subset \C$ (an open set) and $Q_\omega$ is a suitable random trace class perturbation (see \eqref{random_pertubation}), then for sufficiently small $\delta > 0$, the set $\Spec(A + \delta Q_\omega) \cap \Omega$ is discrete with high probability.
\end{theorem*}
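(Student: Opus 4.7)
The plan is to exploit the spectral dichotomy (Proposition~\ref{prop1}): since $A + \delta Q_\omega - z$ remains Fredholm of index $0$ throughout $\Omega$ (trace-class perturbations of Fredholm operators preserve the index), its spectrum in $\Omega$ is either discrete or the whole of $\Omega$. Consequently, it suffices to produce, with high probability, a \emph{single} point $z_0 \in \Omega$ at which $A + \delta Q_\omega - z_0$ is invertible. In fact, a quantitative lower bound
\[
s_{\min}(A + \delta Q_\omega - z_0) \geq \eta(\delta) > 0
\]
at one such $z_0$ is enough to break the dichotomy and force discreteness of $\Spec(A + \delta Q_\omega) \cap \Omega$.

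To obtain such a bound, I would set up a Grushin problem at $z_0$: choose finite-rank operators $R_+ \colon \mathcal{H} \to \CC^N$ and $R_- \colon \CC^N \to \mathcal{H}$ (built from singular vectors associated with the $N$ smallest singular values of $A - z_0$) so that
\[
\mathcal{P}_0 = \begin{pmatrix} A - z_0 & R_- \\ R_+ & 0 \end{pmatrix} \colon D(A) \oplus \CC^N \to \mathcal{H} \oplus \CC^N
\]
is bijective with bounded inverse $\begin{pmatrix} E & E_+ \\ E_- & E_{-+}\end{pmatrix}$. By the Schur complement formula, $A - z_0$ is invertible iff the $N\times N$ matrix $E_{-+}$ is, and quantitative bounds on $s_{\min}(A - z_0)$ translate to bounds on $s_{\min}(E_{-+})$ up to constants controlled by $\norm{\mathcal{P}_0^{-1}}$. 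For $\delta$ small enough to make the Neumann series converge, inserting the random perturbation yields the \emph{effective Hamiltonian}
\[
E_{-+}^{\delta}(z_0) \;=\; E_{-+} \;-\; \delta\, E_- Q_\omega E_+ \;+\; \mathcal{O}\pr{\delta^2 \norm{Q_\omega}^2},
\]
so that invertibility of $A + \delta Q_\omega - z_0$ is equivalent to invertibility of this $N \times N$ random matrix.

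The probabilistic heart of the argument is then a small-ball estimate for $s_{\min}(E_{-+}^{\delta}(z_0))$. One chooses $z_0$ so that $E_{-+}$ is as small as possible (for instance, a point of the unperturbed spectrum, where $E_{-+}$ is singular), so that the random term $-\delta E_- Q_\omega E_+$ dominates. Because $E_\pm$ act between $\CC^N$ and $\mathcal{H}$, the product $E_- Q_\omega E_+$ is a genuine $N \times N$ random matrix, and a Rudelson--Vershynin-type anti-concentration bound of the form
\[
\P\bigl( s_{\min}(E_- Q_\omega E_+) \leq t \bigr) \;\leq\; C\pr{t/\sigma}^{c}
\]
(with $\sigma$ quantifying the spread of $Q_\omega$ in the directions probed by $E_\pm$) would deliver the tail estimate. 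One then optimizes $t$ against $\delta$ so that the linear main term beats the quadratic Neumann remainder.

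The main obstacle, I expect, is verifying the non-degeneracy input required for the anti-concentration step. Since $Q_\omega$ is trace class, its variances decay rapidly, and one must rule out that the near-kernel and near-cokernel of $A - z_0$ are aligned with the tail of $Q_\omega$'s covariance: otherwise the effective matrix $E_- Q_\omega E_+$ is essentially degenerate and no lower bound on $s_{\min}$ survives. The hypothesis accompanying \eqref{random_pertubation} should provide a controlled lower bound on the variances of $Q_\omega$ along the relevant finite-dimensional subspace (or a generic basis assumption), and combining this with standard smallest-singular-value estimates for square random matrices should complete the proof.
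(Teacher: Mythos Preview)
Your proposal is essentially the paper's strategy: invoke the dichotomy, reduce to invertibility at one point $z_0$, set up a Grushin problem with $R_\pm$ built from the small singular vectors of $A-z_0$, pass to the effective $N\times N$ matrix $E_{-+}^\delta$ via a Neumann series, and then run a probabilistic argument on this finite matrix. Your identification of the key obstacle---that the covariance of $Q_\omega$ must not vanish on the near-kernel/near-cokernel of $A-z_0$---is exactly the content of Hypothesis~\ref{def1} (the lower bound $C_S$ on $S_1,S_2$ restricted to those subspaces).

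Two points where the paper differs from your sketch. First, for the bare discreteness statement the paper uses a softer argument than a Rudelson--Vershynin small-ball bound: once one controls the law of the full perturbation $E_-^0 Q_\omega E_+^0 + T$ (remainder included) by an absolutely continuous measure on $\CC^{N\times N}$ (Lemma~\ref{lemma.1}), the event $\{\det E_{-+}^\delta = 0\}$ has measure zero automatically, and the only probability loss comes from the norm-cutoff event $\mathcal{Q}_M^c$ needed for the Neumann series. The quantitative small-ball estimate you describe is instead the content of the separate Theorem~\ref{theorem:quantitative}, where a Hager--Sj\"ostrand determinant bound (Lemma~\ref{lemma.2'}) plays the role of your anti-concentration input. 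Second, there is no need to pick $z_0$ so that $E_{-+}$ is singular or ``small'': the argument works for any fixed $z_0\in\Omega$, since adding a random matrix with a density to \emph{any} deterministic matrix yields a nonsingular matrix almost surely. The paper also handles the quadratic Neumann remainder $T$ not by a separate norm bound competing with the linear term, but by absorbing it directly into the Radon--Nikodym comparison of Lemma~\ref{lemma.1}.
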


The general framework of our proof follows the works of Hager, Sj\"ostrand, and the authors \cite{Hager2006,Hager2008,Vogel2020,oltman2023}. 
However, in all of those cases, the proofs rely on the semiclassical ellipticity of the symbol of the operator being studied.
{In this paper, we will work under more general assumptions of the operator to randomly perturb.
In the case the operator is a pseudo-differential operator, we allow the principal symbol to be \textit{not} semiclassicaly elliptic.}

The spectral stability of our main result can be characterized more abstractly in the following way.
{Recall {that} $\lambda \in \C$ is a normal eigenvalue of $A$ if $\lambda \in \Spec(A)$ is isolated and the kernel of $A-\lambda$ is finite dimensional.}
We then define the discrete spectrum of $A$ as
\begin{align}
    \Spec_{\text{disc}}(A) \coloneq \set{\lambda \in \C : \lambda \text{ is a normal eigenvalue of $A$}}
\end{align}

and its essential spectrum
\begin{align}
    \Spec_{\text{ess}} (A) \coloneq \Spec(A) \setminus \Spec_{\text{disc}}(A).
\end{align}
If $A$ is a self-adjoint operator, and $S$ is a relatively $A$-compact operator, then it is a classical theorem due to Weyl \cite{Weyl1910} that
\begin{align}
    \Spec_{\text{ess}}(A) = \Spec_{\text{ess}}(A +S ).
\end{align}

It is well known that this {does not hold in general}  for non-self-adjoint operators $A$. 
In fact for non-self-adjoint operators $A$, there exists a maximal set \cite{GUSTAFSON1969121} $\operatorname{Spec}_{\text{ess,stab}}(A)$ contained in $\operatorname{Spec}_{\text{ess}}(A)$ that is invariant under compact perturbations
\[ \operatorname{Spec}_{\text{ess,stab}}(A) = \bigcap_{S \text{ compact on }\mathcal H} \Spec(A+S).\]
One can verify that
\begin{equation}
\label{eq:characterization}
\operatorname{Spec}_{\text{ess,stab}}(A)=\{ \lambda: A-\lambda \text{ is not a Fredholm operator of index }0\}.
\end{equation}

We recall that a Fredholm operator on a Hilbert space $\mathcal H$ is a closed linear operator $A\colon D(A) \subset \mathcal H \to \mathcal H$ such that $\operatorname{ran}(A)$ is closed and both its kernel and its cokernel are finite-dimensional. Here, $D(A)$ is a dense linear subspace of {$\mathcal H$} that is complete with respect to the norm $\Vert x \Vert_{D(A)}:=\sqrt{\Vert x  \Vert^2 +\Vert Ax \Vert^2}.$
 
In the case of self-adjoint operators $A$, Weyl's theorem gives
\[\operatorname{Spec}_{\text{ess,stab}}(A)=\operatorname{Spec}_{\text{ess}}(A).\]

Equation \eqref{eq:characterization} raises the question about the stability of $ \operatorname{Spec}_{\text{ess}}(A) \setminus \operatorname{Spec}_{\text{ess,stab}}(A)$ under {generic} perturbations. 
By definition, this part of the spectrum is not stable under all compact perturbations, but is it stable under suitable random perturbations of $A$?

This article considers random perturbations of Fredholm operators of index $0$.
As we shall see in the following, the spectrum of such operators satisfies a dichotomy on the set of $z$ such that $A-z$ is Fredholm of index $0$. 
{We aim to provide quantitative estimates on the stability of the essential spectrum by estimating the probability the smallest singular value of $A -z$ is not too small.}

\begin{qu}
\label{qu:stab}
How stable is the essential spectrum of non-self-adjoint Fredholm operators of index $0$ under random trace-class perturbations?
\end{qu}

More specifically, for a fixed {separable} Hilbert space $\mathcal{H}$ with a dense subset $D(A)$, we consider unbounded Fredholm operators $A \colon D(A)\subset \mathcal H \to \mathcal{H}$ with index $0$. For such operators, let
\begin{align}
    \rho_F(A)\coloneqq \{z\in \CC:  A-z \text{ is a Fredholm operator}\} \label{eq:rhoFA}
\end{align}
denote the Fredholm domain of $A$. Because the Fredholm index is constant with respect to small perturbations (see for instance \cite[Theorem C.5]{dyatlov2019mathematical}), the Fredholm index is constant $\operatorname{ind}(A-z')=\operatorname{ind}(A-z)$ for $z,z'$ in the same connected component of $\rho_F(A)$ \edit{and $\rho_F(A)$ is an open set}.   We then define \edit{the open set}
\begin{align}
    \rho^{(0)}_F(A)\coloneqq \{z\in \CC:  A-z \text{ is a Fredholm operator of index } 0\} \label{eq:rhoFA0}.
\end{align}
Note that if $A$ is Fredholm of index zero, then trivially $0\in \rho^{(0)}_F(A)$.

Analytic Fredholm theory, see Proposition \ref{prop1}, shows that for each connected component $D \subset \rho_F^{(0)}(A)$, either $D\subset \Spec(A)$ or $\Spec(A) \cap D$ is discrete.
adWe can always add an arbitrarily small \edit{(ad hoc)} finite rank perturbation to $A$ so that the second case holds\footnote{\edit{Indeed, without loss of generality, suppose that $0\in D$. Then if the kernel of $A$ has an orthonormal basis $\set{u_i}_{i=1}^{N}$, and the cokernel of $A$ has an orthonormal basis $\set{v_i}_{i=1}^N$, then $A + \delta  \sum_1^N v_i \otimes u_i $ is invertible for every $\delta > 0$, so that $0 \notin \Spec(A + \delta \sum_1^N v_i \otimes u_i )$ and thus spectrum of the perturbation of $A$ is discrete in $D$.}}.
The main result of this paper proves that the second case is generic (in a certain sense\edit{, but unlike the ad hoc perturbation}), so that the stability in Question \ref{qu:stab} fails dramatically.

As a random perturbation, we consider quasi-finite-rank random perturbations of the matrix elements of the operator. 
Although often not indicative of a physical noise profile, it is motivated by numerical algorithms, where one often considers finite-rank approximations of the full operator. 
Thus, a take on Question \ref{qu:stab} from the perspective of a numerical analyst could be whether a noisy finite-rank implementation of a Fredholm operator of index $0$ can still be expected to show traces of non-discrete spectra? 
As our main theorem and the illustration in Figure \ref{fig:1} show, the answer is negative. 

\subsection{The motivating example: Twisted bilayer graphene}\label{sec:motiv}

A representative of the family of operators that motivated {this paper} appears in the mathematical study of \textit{twisted bilayer graphene} (TBG) in the so-called chiral limit, see \cite{Becker2020} for details. The operator is defined as
\begin{align}
D_h(\beta) \coloneqq \mat{2h D_{\bar z } &\beta  U(z) \\ \beta  U(-z) & 2h D_{\bar z}}\colon H^1 (\C /\Gamma ; \C^2 ) \to L^2 (\C /\Gamma ; \C^2) \label{eq:first define of Dh}
\end{align}
where $\beta \in \C$, $\vert \beta\vert = 1$ is the coupling parameter, $D_{\bar z} \coloneq (2i)^{-1}(\p_{\Re z} + i \p_{\Im z})$, 
\begin{align}
U(z) \coloneqq \sum _{k=0}^2  \omega^k e^{\frac{1}{2} (z \bar \omega ^k - \bar z \omega^k)},
\end{align}
$\omega\coloneqq e^{2\pi i /3}$, $\Gamma \coloneq 4\pi (i\omega \Z \oplus i \omega^2 \Z)$, and $h\in \R_{> 0 }$ is proportional to the twisting angle between two stacked layers of graphene. It is proven in \cite{Becker2020} that there exists a discrete set $\mathcal{M} \subset \C$ such that
\begin{align}
\label{eq:M}
\Spec_{L^2 (\C  /\Gamma)} D_h(\beta) = \begin{cases}
\C & \beta / h \in \mathcal{M},\\
\Gamma ^* & \beta / h \notin \mathcal{M}
\end{cases}
\end{align}
where $\Gamma^*$, the dual lattice of $\Gamma$, is a discrete set.\footnote{\edit{
The dual lattice of $\Gamma$ is by definition all $k\in \C $ such that $(\gamma \bar k + \bar \gamma k)\in 4\pi \Z$ for all $\gamma \in \Gamma$, which can be explicitly computed as $\Gamma ^*  = 3^{-1/2} (\omega \Z \oplus \omega^2 \Z )$.}
}
In the theory of twisted bilayer graphene, for fixed $\beta$, the twisting angle $h$ is called \emph{magic} if and only if $\Spec(D_h(\beta))=\CC$. 
\edit{
The set $\mathcal{M}$ has no explicit description, however it can be described as the set of eigenvalues of a certain auxiliary compact operator (see \cite[Theorem 2]{Becker2020} for details).}

A consequence of our main result (Theorem \ref{thm:general result}) is that $D_h(\beta)$ does not exhibit any magic angles {(with overwhelming probability)} if perturbed by a suitable small random perturbation, as described below.

\begin{theorem}[Application to TBG]  \label{Thm.1}
Suppose $\chi(z,\zeta) \in C^\infty_0 (T^* (\C /\Gamma); [0,1])$ is identically $1$ for $|\zeta| < C$, for some sufficiently large $C$, and
\begin{align}
Q_\omega := \mat{\Op_h (\chi) & 0 \\ 0 &\Op_h (\chi) } \circ \left ( \sum _{j,k} \alpha _{j,k} e_j \otimes e_k \right ) \circ \mat{\Op_h (\chi)& 0 \\ 0 & \Op_h (\chi) } 
\end{align}
where $\set{e_j}_{j\in \N}$ is an orthonormal basis of $L^2 (\C / \Gamma ; \C^2)$, $\alpha _{j,k}$ are i.i.d. complex Gaussian random variables with mean $0$ and variance $1$, and $\Op_h (\chi)$ is the Weyl quantization of $\chi$ (see Definition \ref{def:quantization_thing}). 
Then \edit{for fixed $\beta$,} if $0 < \delta < h^{\kappa}$, with $\kappa > 2$, $D_h (\beta) + \delta Q_\omega$ has discrete spectrum with probability at least
\begin{align}
1 - C_1 e^{-C_2 / h ^{2 \kappa }} 
\end{align}
for positive constants $C_1$ and $C_2$ (i.e. with overwhelming probability).
\end{theorem}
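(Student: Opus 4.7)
The plan is to deduce Theorem \ref{Thm.1} directly from the general result, Theorem \ref{thm:general result}, by verifying each of its hypotheses for the pair $(D_h(\beta), Q_\omega)$ and then inserting the semiclassical scaling $\delta < h^\kappa$. The first ingredient I need is that $D_h(\beta) - z$ is a Fredholm operator of index $0$ for every $z \in \C$; this is automatic from \eqref{eq:M}, since $D_h(\beta)$ itself is Fredholm of index $0$ and the Fredholm index is constant in connected components of $\rho_F(D_h(\beta))$, which in this case is all of $\C$ (either $\Spec(D_h(\beta))$ is $\Gamma^*$ or $\Spec(D_h(\beta))=\C$ but $D_h(\beta)-z$ is still Fredholm of index $0$ at every $z$). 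Therefore I can take the open set $\Omega$ in the statement of Theorem \ref{thm:general result} to be an arbitrary bounded open subset of $\C$.

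Next I would show that $Q_\omega$ fits the abstract template \eqref{random_pertubation} of admissible random perturbations. The operator $\Op_h(\chi)$ for $\chi \in C_0^\infty(T^*(\C/\Gamma))$ is a semiclassical pseudodifferential operator whose Schwartz kernel is smooth and compactly supported in phase space. In particular, $\Op_h(\chi)$ is trace-class with $\Vert \Op_h(\chi)\Vert_{\mathrm{tr}} = O(h^{-2})$ and behaves like a projection onto a spectral subspace of effective dimension $N(h) = O(h^{-2})$. Consequently the sandwich $\Op_h(\chi)\bigl(\sum \alpha_{j,k} e_j \otimes e_k\bigr)\Op_h(\chi)$ is quasi-finite-rank: up to a negligible remainder it is a Gaussian matrix acting between subspaces of dimension $N(h)$. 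This puts $Q_\omega$ exactly in the form of random trace-class perturbation required by Theorem \ref{thm:general result}, with the appropriate Hilbert--Schmidt norm and rank controlled polynomially in $h$.

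The third step is the quantitative one: I would insert the scale $\delta = h^\kappa$ into the probability estimate provided by Theorem \ref{thm:general result}. That theorem gives a probability at least $1 - C_1 \exp(-C_2 \delta^2 /\tau^2)$, say, for the smallest singular value of $D_h(\beta) + \delta Q_\omega - z$ to be bounded below on a fine grid in $\Omega$, where $\tau$ is the effective noise scale of the finite-rank Gaussian block; after normalization by the (polynomial in $h$) rank, $\tau^2$ will be $O(h^{M})$ for some $M$ depending on the symbol calculus bounds used. Combining $\delta^{2} \sim h^{2\kappa}$ with $\tau^2 \sim h^{M}$ and choosing $\kappa > 2$ yields a lower tail bound of the form $1 - C_1 e^{-C_2/h^{2\kappa}}$, as stated. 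Finally, because a lower bound on the smallest singular value of $D_h(\beta) + \delta Q_\omega - z$ on, say, the boundary of $\Omega$ together with analytic Fredholm theory (Proposition \ref{prop1}) forces $\Spec(D_h(\beta) + \delta Q_\omega) \cap \Omega$ to be discrete, the probabilistic bound transfers from singular values to the discreteness of the spectrum, and exhausting $\C$ by an increasing sequence of such bounded $\Omega$'s completes the proof.

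The main obstacle I anticipate is not the Fredholm bookkeeping or the Gaussian concentration, but rather verifying the quantitative hypotheses of Theorem \ref{thm:general result} in the non-elliptic setting: one has to check that the conjugation by $\Op_h(\chi)$ effectively injects the Gaussian noise into the directions spanned by the approximate kernels and cokernels of $D_h(\beta)-z$, uniformly in $z \in \Omega$. Since $D_h(\beta)$ is \emph{not} semiclassically elliptic, one cannot appeal to standard parametrix constructions to localize these near-kernels in phase space. The constant $C$ in the statement (``$\chi$ is identically $1$ for $|\zeta| < C$'') must be chosen large enough that the low-frequency cutoff $\Op_h(\chi)$ captures the phase-space support of the approximate null vectors of $D_h(\beta)-z$; this is the step that dictates the size of $C$ and, together with the symbol estimates for $\Op_h(\chi)$, ultimately fixes $M$ and hence the admissible range $\kappa > 2$.
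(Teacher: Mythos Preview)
Your overall strategy---reduce to Theorem~\ref{thm:general result} by verifying its hypotheses for $D_h(\beta)$ with $S_1=S_2=\Op_h(\chi I_2)$, then plug in the scaling $\delta<h^\kappa$---is exactly the paper's approach. But several steps in your execution are off.

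First, the probability bound from Theorem~\ref{thm:general result} is not of the form $1-C_1\exp(-C_2\delta^2/\tau^2)$: the exponent in \eqref{eq:prob bound} is
\[
\frac{C_0\|S_1\|\,\|S_1\|_{\operatorname{Tr}}\|S_2\|_{\operatorname{HS}}^2-\alpha\delta^{-2}}{2\|S_1\|^{3/2}\|S_2\|},
\]
so smaller $\delta$ makes the bound \emph{better}, not worse. For $S_1=S_2=\Op_h(\chi I_2)$ one has $\|S_1\|,\|S_2\|\sim 1$ and $\|S_1\|_{\operatorname{Tr}},\|S_2\|_{\operatorname{HS}}^2=\mathcal{O}(h^{-2})$, so the numerator is $\mathcal{O}(h^{-4})-\alpha\delta^{-2}$. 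The condition $\kappa>2$ is precisely what makes $\alpha\delta^{-2}>\alpha h^{-2\kappa}$ dominate $h^{-4}$, yielding the exponent $-C_2 h^{-2\kappa}$. Your account of this competition is garbled and would not recover the correct threshold.

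Second, there is no need for a grid in $\Omega$, a boundary argument, or an exhaustion of $\C$. Since $\rho_F^{(0)}(D_h(\beta))=\C$ is connected, Proposition~\ref{prop1} says that invertibility of $D_h(\beta)+\delta Q_\omega-z_0$ at a \emph{single} point $z_0$ already forces the spectrum to be discrete in all of $\C$. The paper (via the Grushin problem in the proof of Theorem~\ref{thm:general result}) works at one fixed $z_0$.

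Third, and most importantly, you correctly flag the verification of Hypothesis~\ref{def1} as the crux, but you misjudge the mechanism. You say one cannot appeal to parametrix constructions because $D_h(\beta)$ is not semiclassically elliptic. In fact the paper \emph{does} use a parametrix: while $D_h(\beta)$ is not elliptic, the operator $D_h(\beta)^*D_h(\beta)-t_i^2+i\Op_h(\tilde\chi I_2)$ has principal symbol with determinant uniformly bounded below (the imaginary shift by $\tilde\chi$ handles the region $|\zeta|$ small, and for $|\zeta|$ large the symbol of $D_h(\beta)^*D_h(\beta)$ dominates). A standard matrix-valued parametrix (Proposition~\ref{claim.parametrix}) then shows that each eigenfunction $u$ with eigenvalue $t_i^2<\alpha$ satisfies $\Op_h(\chi I_2)u=u+\mathcal{O}(h^\infty)$, which is exactly the lower bound \eqref{CS constant} with $C_S=1+\mathcal{O}(h^\infty)$. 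This is where the ``sufficiently large $C$'' in the statement comes from: $\chi$ must be $1$ on a set large enough that the modified operator is elliptic.
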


In Figures \ref{fig:2} and \ref{fig:1} we provide numerical evidence supporting Theorem \ref{Thm.1}.
In Figure \ref{fig:2}, we compute the spectrum of a finite matrix truncation of $D_h(\beta)$ at a fixed magic angle (top left plot).
The accumulation of eigenvalues near the origin should be interpreted as a finite-rank analogue of having the entire complex plane as spectrum.
We then compute the spectrum at the same $h$ and $\beta$, but with a small random perturbation.
We observe that there is no accumulation of eigenvalues near the origin, suggesting that the magic angles have been washed out by randomness.

In Figure \ref{fig:1}, we attempt to quantify this finite-rank analogue of having the entire complex plane as spectrum.
We fix $\beta = 1$, vary $h$ between $0$ and $2.5$, and measure the density of eigenvalues near the origin.
Any spikes in the eigenvalues should correspond to magic angles.
We then notice two spikes (corresponding to the first two magic angles) as shown in the first plot.
We then do the same computation, but with a random perturbation added, and observe no spikes in eigenvalues.

{
Spectral analysis of random perturbations of non-self-adjoint operators is a rich field within the random matrix theory community.
Indeed, Davies and Hager described the spectrum of randomly perturbed Jordan matrices \cite{davies2009perturbations}.
Similarly, Guionnet, Wood, and Zeitouni described the empirical measure of eigenvalues of certain randomly perturbed non-self-adjoint operators \cite{guionnet2014convergence}.
Basak, Paquette, and Zeitouni considered random perturbations of banded and twisted Toeplitz matrices \cite{basak2019regularization,basak2020spectrum}.
Sj\"ostrand and the author further described similar spectral properties for Toeplitz matrices and Toeplitz matrices \cite{sjostrand2016large,sjostrand2021general}.
}
{In various settings, the limiting spectral measure of randomly perturbed non-self-adjoint operators have been described. See \cite{hager20062} who considered perturbations of $hD_x + g(x)$, \cite{Vogel2020} who considered perturbations of quantizations of tori, and \cite{oltman2023} who considered perturbations of quantizations of K\"ahler manifolds}\edit{.}

\begin{figure}
    \includegraphics[width=6cm]{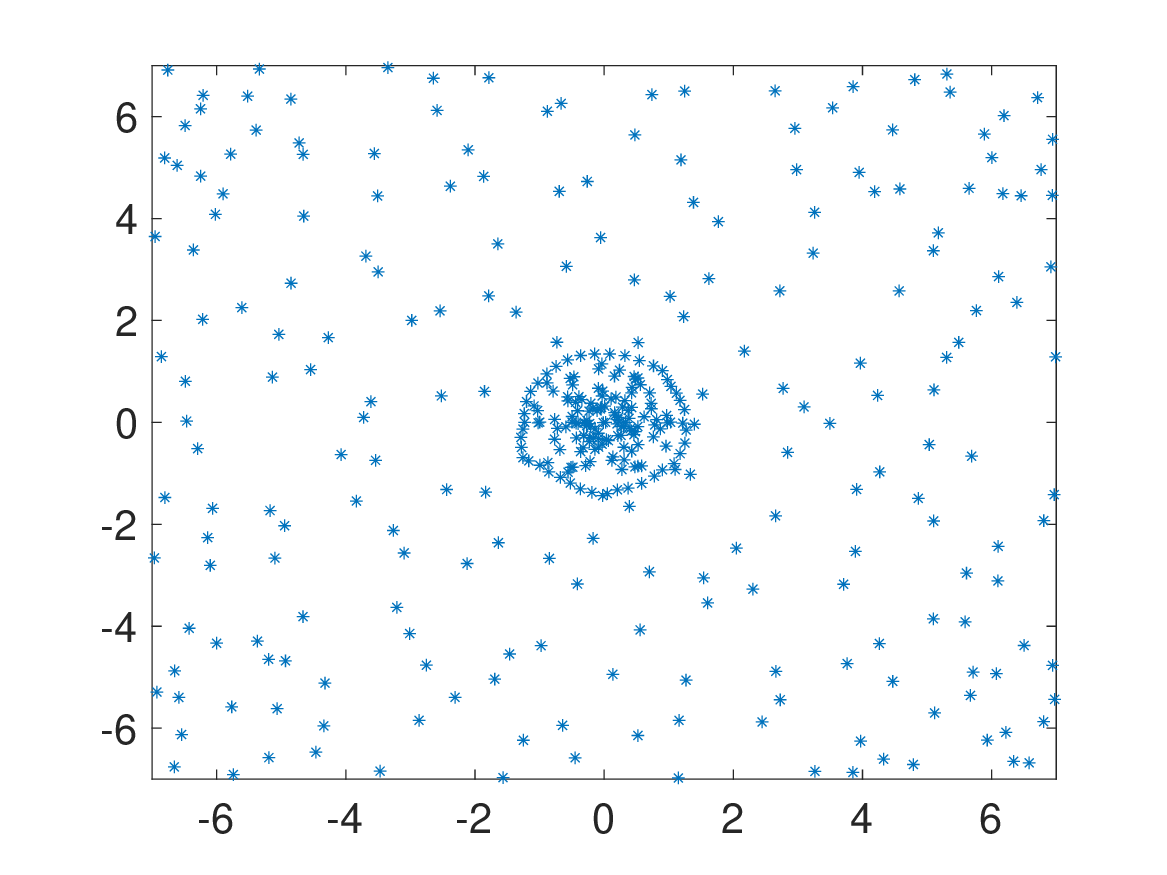}
    \includegraphics[width=6cm]{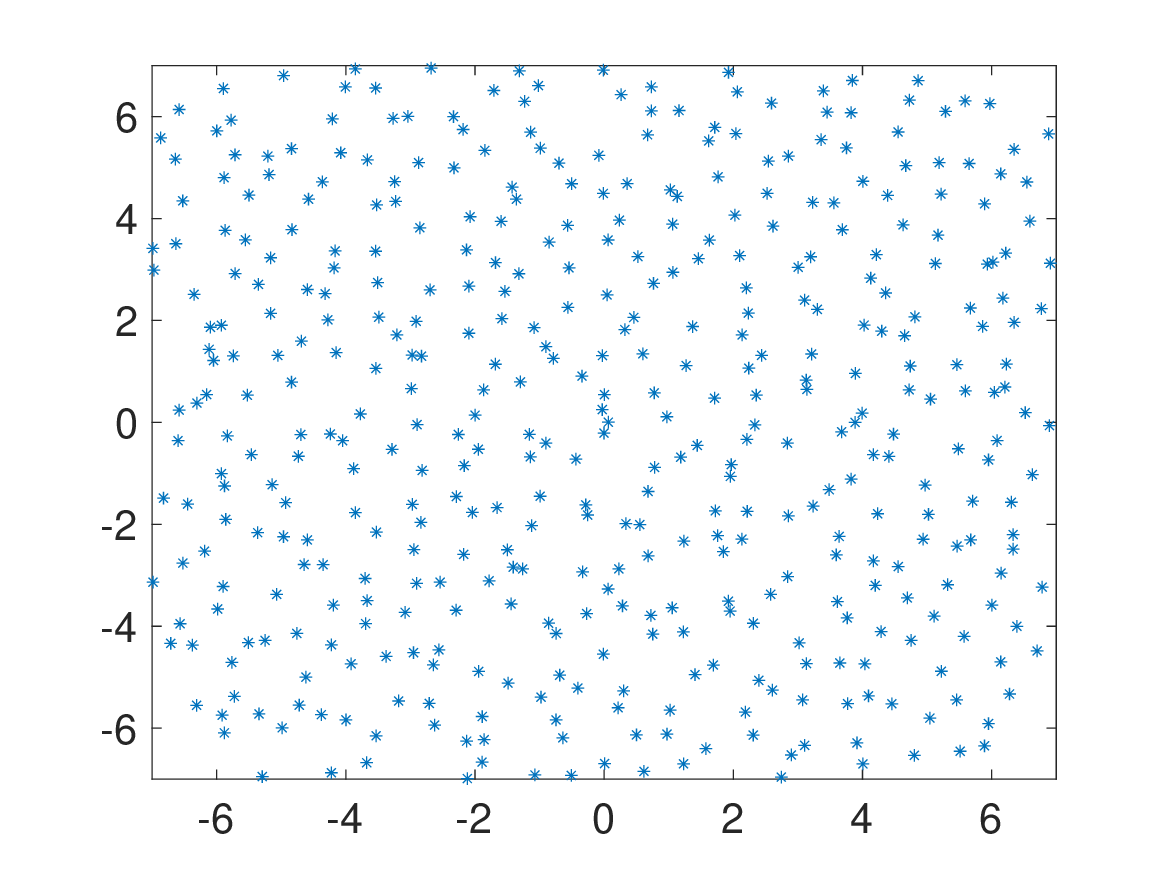}\\
        \includegraphics[width=6cm]{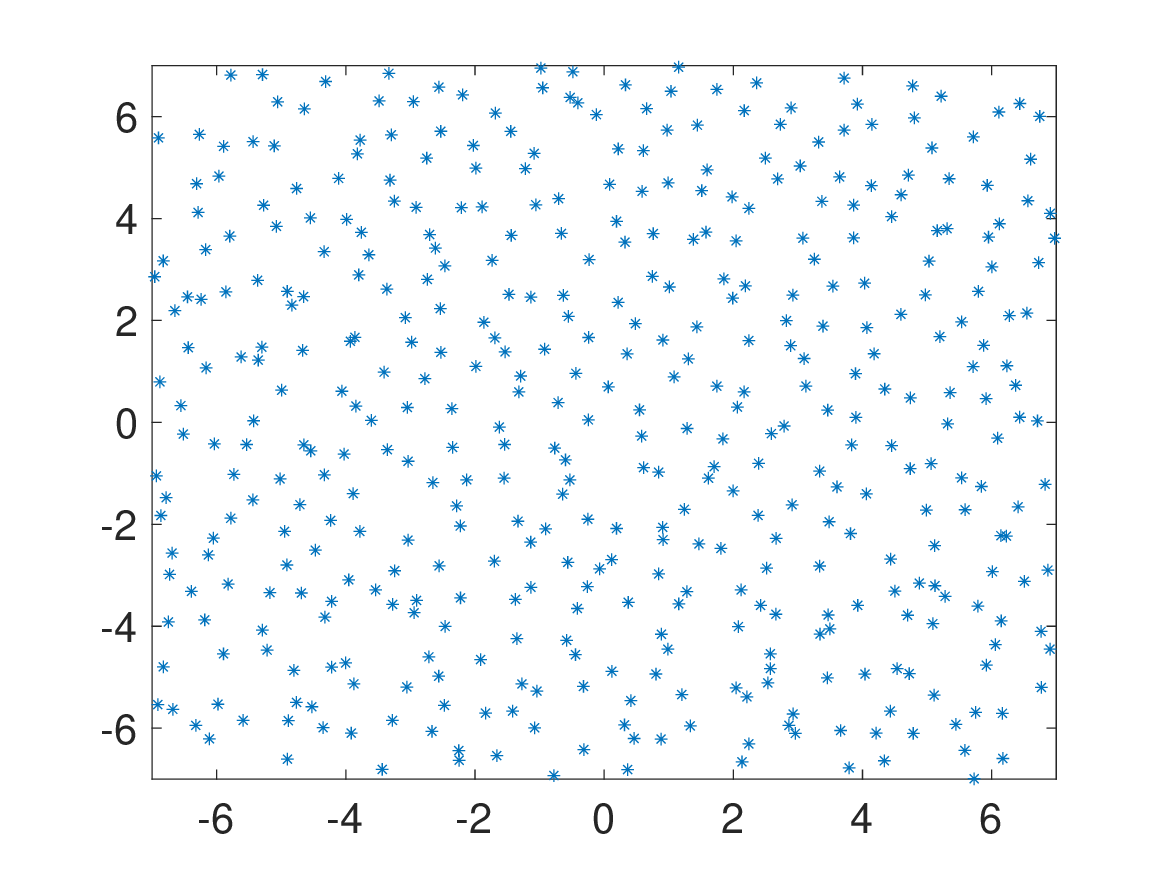}
    \includegraphics[width=6cm]{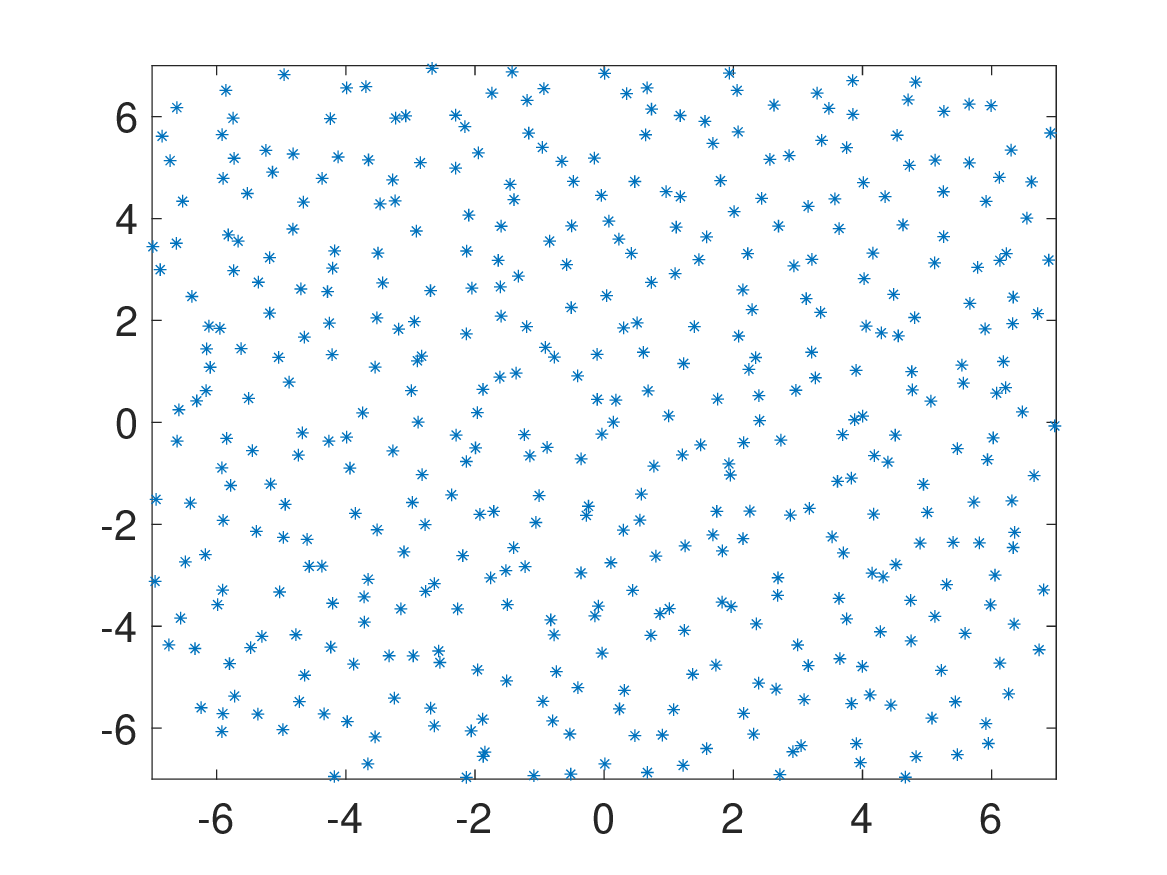}
    \caption{Spectrum of smallest 600 eigenvalues of finite matrix truncation of $D_h(\beta)$, with matrix size 13122, for largest magic $h$ and $\beta=1$ (top left). The accumulation of eigenvalues since $\Spec(D_h(\beta))=\CC$ in the center is clearly visible. Spectrum of finite matrix truncation of $D_h(\beta)$, with same $h,\beta$ and random $\delta =0, 0.01, 10^{-4}, 10^{-7}$ perturbation (clockwise). The accumulation of eigenvalues in the center gets resolved immediately.}\label{fig:2}
\end{figure}

\begin{figure}
\includegraphics[width=5cm]{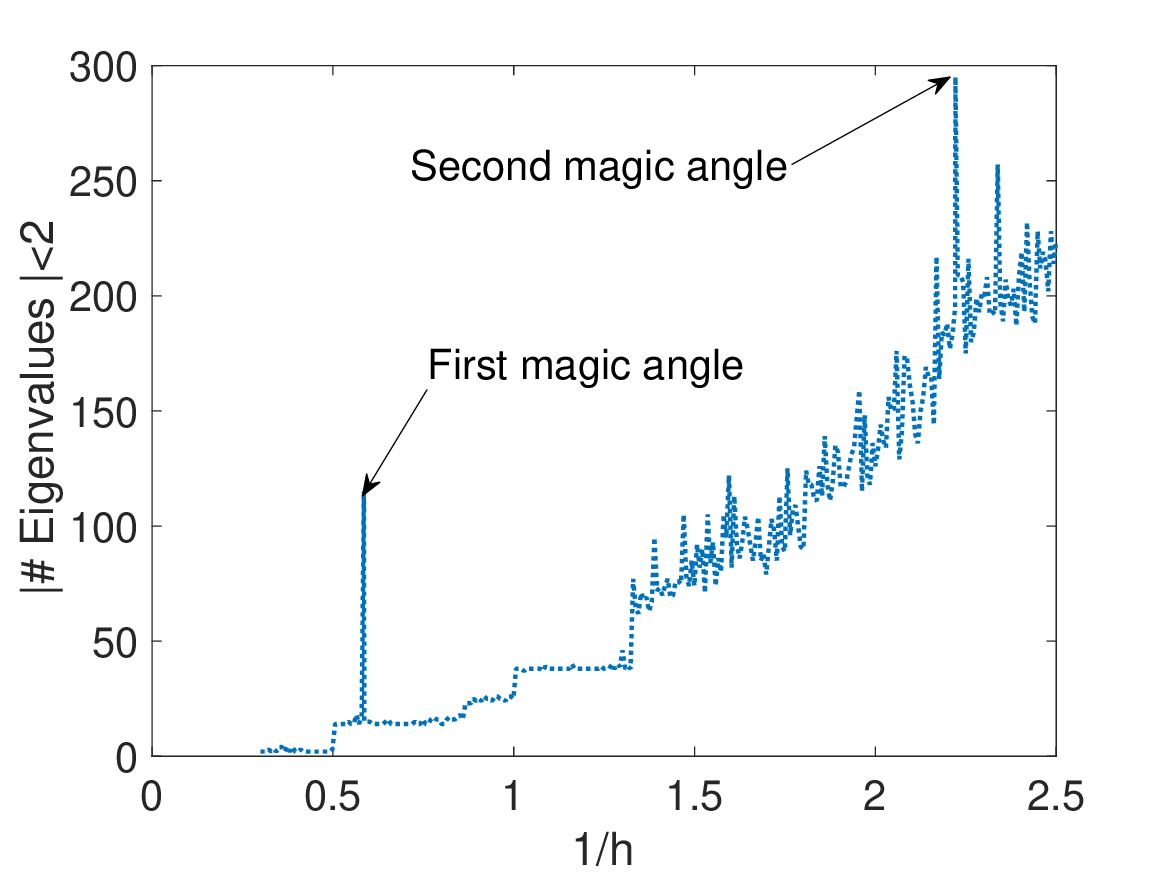}
     \includegraphics[width=5cm]{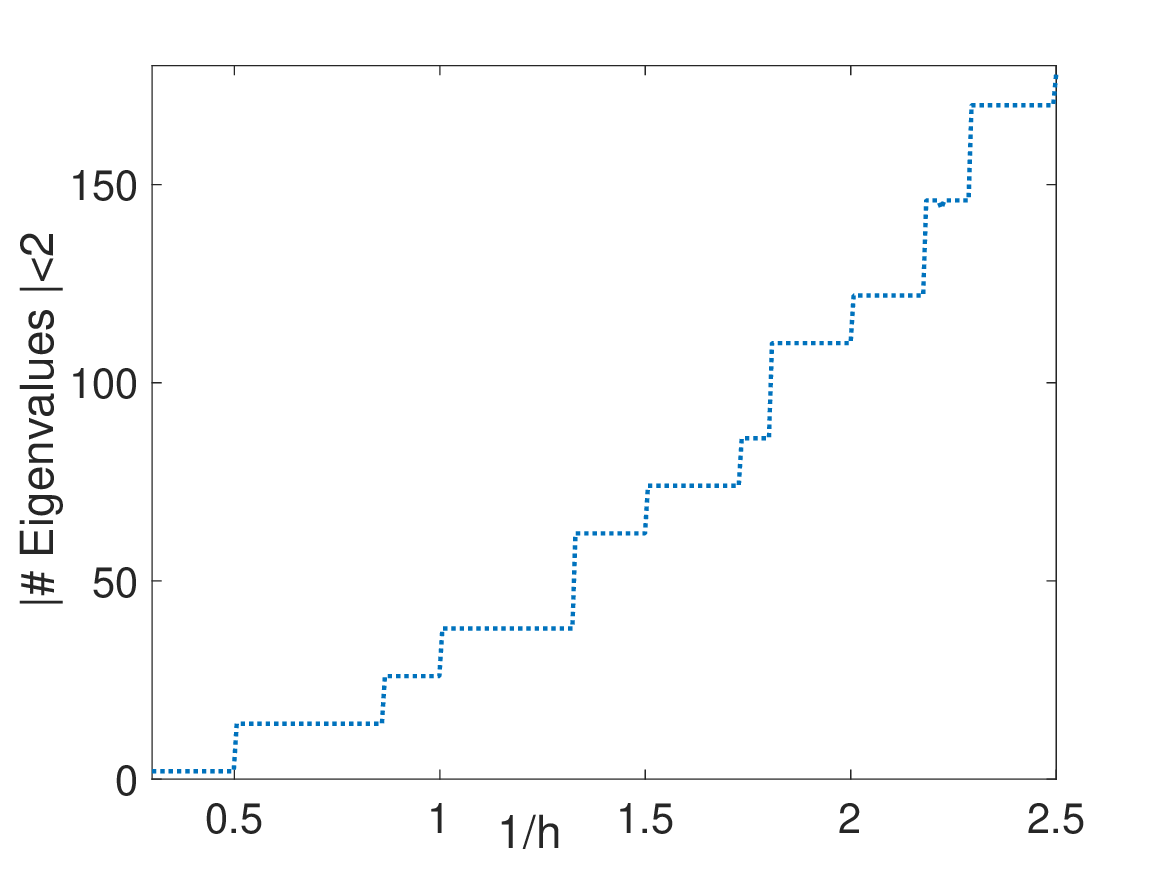}
         \includegraphics[width=5cm]{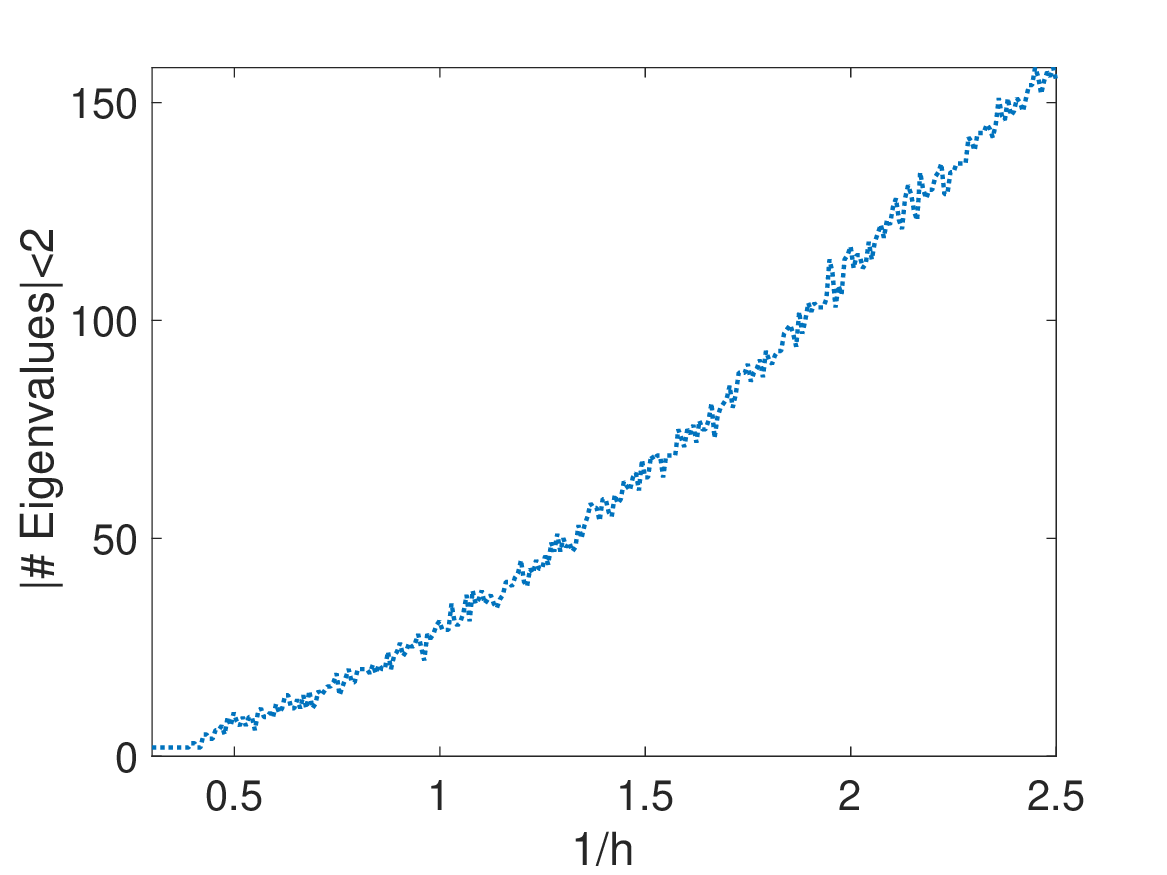}
     \caption{\label{fig:1}Counting the number of eigenvalues of a finite rank approximation of $D_h(1)$ (defined in \eqref{eq:first define of Dh}) for different $h$ in a ball of radius $2$ with random perturbations of size $\delta=0,10^{-5},10^{-1}$ from left to right. Magic angles at $1/h \approx 0.586,2.221$ are clearly visible as spikes in the unperturbed figure (left) but washed out in the perturbed ones (center and right).}
\end{figure}

\smallsection{Outline of paper} 
In Section \ref{section1} we provide the relevant background to state the general result of this paper (Theorem \ref{thm:general result}) as well as a more quantitative result (Theorem \ref{theorem:quantitative}). 
In Section \ref{section2} we provide a proof of these results. 
Then in Section \ref{sec:TBG}, we discuss the application of Theorem \ref{thm:general result} to an operator appearing in the study of magic angles for twisted bilayer graphene, proving Theorem \ref{Thm.1}.
This analysis requires {the already known quantization procedure for matrix-valued functions on $\C / \Gamma$, which we felt worthwhile to outline separately in Appendix \ref{section3}}.

\smallsection{Notation}
For $u,v$ elements of a Hilbert space, the operator $u\otimes v$ operates on $w \in \mathcal{H}$ by:
\begin{align}
(u\otimes v) w = u \ip{w}{v}.
\end{align}
For a function $f$ depending on a positive parameter $h>0$ and $M \in \N$ we write $f = O(h^M)$ if there exist $h_0 > 0$ and $C_M >0$ such that $|f| \le C_M h^M$ for all $0 < h < h_0$.
We write $f= O(h^\infty)$ if $f = O(h^M)$ for all $M \in \N$.

\smallsection{Acknowledgments}
We would like to thank Maciej Zworski for suggesting this problem as well as providing many helpful discussions. 
S. Becker acknowledges support from the SNF Grant PZ00P2 216019. 
I. Oltman was jointly supported by the National Science Foundation Graduate Research Fellowship under grant DGE-1650114 and by the Simons Targeted Grant Award No. 896630. 
M. Vogel was partially funded by the Agence Nationale de la Recherche, through the project ADYCT (ANR-20-CE40-0017).

\section{Setup of problem and main result}\label{section1}

For the remainder of this paper, we let $\mathcal H $ denote a fixed {separable} Hilbert space.
We start with a simple observation about the operators we consider.

\begin{prop}\label{prop1}
Let $A$ be a closed {densely defined} linear operator $A\colon D(A) \subset \mathcal H \to \mathcal H$ and $D\subset \rho_F^{(0)}(A)$ (defined in \eqref{eq:rhoFA0}) be a connected component, then either $D \subset \Spec(A)$ or $D \cap \Spec(A)$ is discrete.
\end{prop}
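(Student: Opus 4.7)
The plan is a standard application of analytic Fredholm theory via the Schur complement (Grushin) reduction, together with a connectedness argument on $D$.

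First I would localize near a point. Fix $z_0\in D$. Since $A-z_0$ is Fredholm of index zero, set $N:=\dim\ker(A-z_0)=\dim\operatorname{coker}(A-z_0)$, choose orthonormal bases of $\ker(A-z_0)$ and $\ker(A-z_0)^*$, and use them to build a Grushin problem: define $R_+:\mathcal H\to\CC^N$ and $R_-:\CC^N\to \mathcal H$ so that the block operator
\[
\mathcal P(z):=\begin{pmatrix} A-z & R_- \\ R_+ & 0 \end{pmatrix}\colon D(A)\oplus \CC^N \to \mathcal H\oplus \CC^N
\]
is bijective at $z=z_0$. Since bijectivity is open with respect to operator-norm-bounded perturbations and $z\mapsto A-z$ is analytic in $z$ (as a bounded perturbation relative to $A-z_0$), $\mathcal P(z)$ remains bijective on some open disk $U\subset D$ around $z_0$, and the inverse
\[
\mathcal E(z)=\begin{pmatrix} E(z) & E_+(z) \\ E_-(z) & E_{-+}(z) \end{pmatrix}
\]
is holomorphic on $U$. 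A standard Schur-complement computation shows that for $z\in U$, $A-z$ is invertible on $\mathcal H$ iff the finite matrix $E_{-+}(z)\in \CC^{N\times N}$ is invertible, and in that case $(A-z)^{-1}=E(z)-E_+(z)E_{-+}(z)^{-1}E_-(z)$.

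Next I would apply the identity principle. The function $f_{z_0}(z):=\det E_{-+}(z)$ is holomorphic on $U$, and $\Spec(A)\cap U=\{z\in U: f_{z_0}(z)=0\}$. Thus either $f_{z_0}\equiv 0$ on $U$, in which case $U\subset \Spec(A)$, or the zero set is discrete in $U$. This is the local dichotomy.

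Finally I would globalize via connectedness. Define
\[
S_1:=\{z_0\in D:\ U\subset \Spec(A)\ \text{for some neighborhood }U\text{ of }z_0\},\qquad S_2:=D\setminus S_1.
\]
Clearly $S_1$ is open. For $z_0\in S_2$, the local dichotomy rules out $f_{z_0}\equiv 0$ on the associated $U$, so $\Spec(A)\cap U$ is a discrete subset of $U$; removing its isolated points produces an open neighborhood of $z_0$ contained in $S_2$, proving $S_2$ is open. Since $D$ is connected, either $S_1=D$, yielding $D\subset \Spec(A)$, or $S_2=D$, and covering $D$ by the local neighborhoods $U$ then shows $\Spec(A)\cap D$ has no accumulation point in $D$, i.e.\ it is discrete in $D$.

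The only mildly delicate step is the construction of the Grushin problem when $A$ is unbounded: one must verify that $\mathcal P(z)$, viewed as an operator from $D(A)\oplus\CC^N$ (with the graph norm) to $\mathcal H\oplus \CC^N$, depends analytically on $z$ and that the perturbation $(z_0-z)I$ is bounded relative to $A-z_0$, so the open-mapping/Neumann-series argument gives holomorphy of the inverse on a neighborhood. Everything else is routine once this local holomorphic framework is in place.
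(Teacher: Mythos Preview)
Your argument is correct and essentially the same as the paper's, just at a different level of detail: the paper's proof is a one-line invocation of analytic Fredholm theory (citing Theorem~C.8 of Dyatlov--Zworski), whereas you have written out a self-contained proof of that theorem via the Grushin/Schur complement reduction, which is precisely the machinery the paper uses later for its main result. One minor wording issue: in your openness argument for $S_2$, you do not need to ``remove isolated points'' --- once $\Spec(A)\cap U$ is discrete, the entire $U$ lies in $S_2$ (any $w\in U$ with a neighborhood in $\Spec(A)$ would contradict discreteness), which matters since $z_0$ itself could be one of those isolated spectral points.
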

\begin{proof}
Consider a connected component $D\subset \rho_F^{(0)}(A).$ By assumption, the Fredholm index of $A-z$ is zero for all $z \in D.$ 
\edit{
By analytic Fredholm theory, the Proposition follows.
Indeed, one can follow \cite[Theorem C.8]{dyatlov2019mathematical} which constructs a holomorphic function $f(z)$ with the property $f(z) = 0 \iff$ $z\in \Spec(A)$.
We note that in \cite[Theorem C.8]{dyatlov2019mathematical}, $A$ is assumed to be bounded, however the proof follows without modification for an unbounded $A$.
}
\end{proof}

\edit{
We note that for $j\neq 0$, $\rho_F ^{(j)} (A) \subset \Spec(A)$.
This trivially follows by observing that if $z\in \rho_F ^{(j)}(A)$ (for $j\neq 0$), then the kernel or cokernel of $A-z$ is nontrivial and thus $A-z$ is not bijective.
}

\subsection{Random perturbation}
{Here we discuss the random perturbations considered in this work.}
Let $\set{e^1_j : j \in \N}$ and $\set{e^2_j : j \in \N}$ be two orthonormal bases of $\mathcal{H}$, and define
\begin{align}
Q_\omega  \coloneqq S_1 \circ \left  ( \sum _{j,k=1}^\infty \alpha _{j,k} e^1_j \otimes e^2_k \right )  \circ S_2 \label{random_pertubation}
\end{align}
where $S_1$ and $S_2$ are fixed Hilbert--Schmidt operators and $\alpha _{j,k}$ are independent identically distributed {complex} Gaussian random variables with mean $0$ and variance $1$. To make sense of the sum in \eqref{random_pertubation}, we decompose $S_1$ and $S_2$ using a singular value decomposition and use that the law of $(\alpha _{j,k})$ is invariant under unitary transformations, to rewrite $Q_\omega$ as
\begin{align}
Q_\omega = \sum _{j,k}  s_j^1  s_k ^2 \alpha _{j,k}   e_j^1 \otimes  e_j^2
\end{align}
where $ s_j^{1,2}$ are the singular values\footnote{If $S$ is a Hilbert--Schmidt operator, we say $\set{s_j}_{j\in \N}$ are the singular values of $S$ if $s_j$ are the eigenvalues of $\sqrt{S^* S}$. Although not necessarily here, we assume $s_j$ are decreasing to zero {as $j\to\infty$}.} of $S_1$ and $S_2$ respectively and $ e_j^{1,2}$ are two potentially different (from the ones above) orthonormal bases of $\mathcal{H}$. Each term in this sum is an independent complex Gaussian random variable of mean zero and variance $ s_j ^1  s_k^2$. 
It can be shown that with probability $1$, the Hilbert--Schmidt norm of $Q_\omega$ is finite (this follows from Lemma \ref{lemma.norm of qomega} and continuity of measures). 
And under the additional assumption that $S_1$ or $S_2$ is trace-class (we will assume in our main result that $S_1$ is trace-class), then $Q_\omega$ is almost surely trace-class (see \eqref{eq:trace_bound_qomega}).

To analyze $A + \delta Q_\omega$, we require certain elliptic-type properties of $S_1$ and $S_2$. {
Because $A$ is closed and densely defined, the adjoint of $A- z$ (for any fixed $z\in \C$), denoted by $(A- z)^*$ has dense domain 
\begin{align}
    D((A-z)^*)) \coloneq \set { u \in \mathcal{H} : \exists v \in \mathcal{ H } 
 \text{ such that }  \ip{(A-z) w}{u} = \ip{w}{v} \ \forall \  w \in D((A-z)) },
    \end{align}
    see for instance \cite[Theorem VIII.1]{reed1972methods}.
We may then define the operator $(A-z)(A-z)^*$ with domain
\begin{align}
    D((A-z)(A-z)^*) = \set{u \in D((A-z)^*) : (A-z) u \in D((A-z))}. \label{eq:domain}
\end{align}
A result of von Neumann (see for instance \cite[Theorem 3.24]{kato2013perturbation}) states that $(A-z)(A-z)^*$ with this domain is self-adjoint.
}
For the following hypothesis to make sense, we assume that
\begin{align}
    \exists \tilde z\in \C  \text{ such that } \Spec\left((A - \tilde z ) (A - \tilde z )^*\right) \text{ is discrete}. \label{assump:discrete spectrum}
\end{align}
This assumption can be relaxed to $\Spec\left((A - \tilde z ) (A - \tilde z )^*\right)$ being discrete near $0$.
\edit{We stress here that \eqref{assump:discrete spectrum} is much weaker than $A-\tilde z$ having discrete spectrum (which we aim to prove for a perturbation of $A$). 
See \S \ref{sec:TBG} where we prove \eqref{assump:discrete spectrum} for the operator $D_h(\beta)$ coming from TBG.}

{Our proof relies on constructing an auxiliary operator which has components projecting onto the singular vectors of $(A-z)$, which uses that the spectrum of $(A-z)(A-z)^*$ is discrete.
If we only have that the spectrum of $(A-z)(A-z)^*$ is discrete near zero, we can still build an auxiliary operator by using spectral projectors, at the cost of requiring finer analysis.
We will leave this added generality to future work.}

\begin{hyp}\label{def1}
For $\alpha  > 0 $ and $\tilde z  \in \C$ satisfying \eqref{assump:discrete spectrum}, we denote by $\set{e_1,\dots, e_N}$ and $\set{f_1, \dots ,f_N}$ the eigenvectors of $(A- \tilde z  ) (A- \tilde z )^*$ and $(A-\tilde z )^* (A- \tilde z )$ {(equipped with an analogously defined domain to \eqref{eq:domain})} with eigenvalues less than $\alpha$, respectively.
Let 
\begin{align}
    \mathcal{H}_{1,\alpha} \coloneq \Span(\set{e_i : i = 1, \dots, N}) &&\text{and} && \mathcal{H}_{2,\alpha} \coloneq
\Span(\set{f_i: i = 1,\dots, N}).
\end{align}

Because our main motivation is to apply our result to semiclassical operators, we allow $S_1$ and $S_2$ to depend on a positive parameter $h$.
We assume that for $S_1 = S_1(h,\alpha)$ and $S_2 = S_2(h,\alpha)$ there exists a $C_S = C_S (h,\alpha) > 0 $ such that 
\begin{subequations} \label{CS constant}
    \begin{align}
\norm{S_1 v } \ge C_S \norm{v} && \forall \ v \in \mathcal{H}_{1,\alpha },\\
\norm{S_2 w } \ge C_S \norm{w} && \forall \ w\in \mathcal{H} _{2,\alpha}\label{CS constant2}.
\end{align}
\end{subequations}

The set of pairs of operators $(S_1,S_2)$ that satisfy these properties will be denoted by $\mathfrak{S}(\tilde z, \alpha)$.

\end{hyp}

{We note that for a choice of $\alpha$, the number of eigenvalues of $(A-\tilde z ) (A- \tilde z )^*$ may be zero.
In this case, $\mathcal{H}_{1,\alpha}$ and $\mathcal{H}_{2,\alpha}$ are empty, so we do not require additional assumptions for $S_1$ and $S_2$.
Note that in this case, $\tilde z\notin \Spec(A)$, so before being randomly perturbed, $\Spec(A)$ is discrete within the connected component of $\rho_F^{(0)}(A)$ containing $\tilde z$.
}
{For such an $A$, Theorem \ref{thm:general result} stated below still holds, i.e. that the perturbed operator has discrete spectrum with high probability.}

{We implicitly assumed in Hypothesis \ref{def1} that $N$ is finite.
This will always be true for $A$ strictly unbounded (that is, when $A$ is \textit{not} bounded).}

\subsection{Main result}
We now state the main result of our paper showing that random perturbations {(of the form \eqref{random_pertubation})} of closed linear operators $A\colon D(A) \subset \mathcal H \to \mathcal H$ immediately yield a discrete spectrum with high probability. Because we want to apply this to semiclassical operators that depend on a parameter $h> 0$, we state this result for such operators.

\begin{theorem}[General Result]\label{thm:general result} 
Given the following:
\begin{enumerate}
\item \label{condition:1} Let $A=A(h)$ be a family of closed (possibly) unbounded operators $A\colon D(A) \subset \mathcal H \to \mathcal H$ indexed by a parameter $h \in \R_{>0}$.
\item \label{condition:2} 
\edit{Setting $D_j$ as disjoint, open, connected sets such that $\bigcup_{j \in \mathcal J} D_j \subset \rho_F^{(0)}(A)$ (with $\mathcal{J}$ an at most countable index set), then for} all $j\in \mathcal{J}$, there exists $z_j \in D_j$ such that $(A - z_j)^*(A-z_j)$ has discrete spectrum.
\item There are two families of Hilbert--Schmidt operators $$(S_1(h,\alpha), S_2(h,\alpha)) \in \bigcap_{j\in \mathcal{J}}\mathfrak{S}(z_j,\alpha)$$ (defined in \eqref{random_pertubation} and satisfying Hypothesis \ref{def1}) with constant $C_S  = C_S (h,\alpha ) > 0$ (as in \eqref{CS constant}).
\item \label{condition:5} For each $\alpha$ and $h$, $S_1(h,\alpha)$ is a trace-class operator.
\end{enumerate}
Then there exists $C_0> 0 $ such that $\Spec(A + \delta  Q_\omega) \cap \left( \bigcup_{j\in \mathcal{J}} D_j \right)$ (for $\delta > 0$ and $Q_\omega$ defined in \eqref{random_pertubation}) is discrete with probability at least
\begin{align}
\max \left ( 1 - C_0 \exp \left ( \frac{C_0 \norm{S_1}\norm{S_1}_{\operatorname{Tr}} \norm{S_2}_{\operatorname{HS}}^2 - \alpha \delta^{-2}}{2\norm{S_1}^{3/2} \norm{S_2}} \right ), 0 \right ) \label{eq:prob bound}
\end{align}
where $\norm{\cdot}_{\operatorname{Tr}}$ denotes the trace norm.
\end{theorem}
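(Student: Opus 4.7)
The plan is to reduce the discreteness claim, via Proposition \ref{prop1}, to an invertibility statement at the distinguished points $z_j$, and then establish that invertibility with probability at least \eqref{eq:prob bound} by means of a Grushin reduction on the low-singular-vector subspaces, combined with a Gaussian small-ball estimate on the resulting effective Hamiltonian. This is the same overall framework as in \cite{Hager2006,Hager2008,Vogel2020,oltman2023}, but now carried out without any ellipticity hypothesis on the unperturbed operator. The first step is immediate: by condition (5) and the discussion after \eqref{random_pertubation}, $Q_\omega$ is almost surely trace-class (hence compact), so stability of the Fredholm index yields $\bigcup_j D_j \subset \rho_F^{(0)}(A+\delta Q_\omega)$ almost surely, and Proposition \ref{prop1} applied to each $D_j$ shows that it suffices to produce, with the claimed probability, a single point of $D_j$ at which $A+\delta Q_\omega - z_j$ is invertible. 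I would take this point to be $z_j$.

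To analyze invertibility at $z_j$, I would set up the standard Grushin problem using the low-singular-vector bases $\{e_i\}_{i=1}^N \subset \mathcal H_{1,\alpha}$ and $\{f_i\}_{i=1}^N \subset \mathcal H_{2,\alpha}$ from Hypothesis \ref{def1}. With $R_- u := \sum_i u_i e_i$ and $R_+ v := (\ip{v}{f_i})_i$, the operator
\begin{align}
\mathcal P(z_j) = \begin{pmatrix} A - z_j & R_- \\ R_+ & 0 \end{pmatrix} \colon D(A)\times \CC^N \to \mathcal H \times \CC^N
\end{align}
is boundedly invertible with Schur-complement block $E_{-+}$ satisfying $\|E_{-+}\| \le \sqrt{\alpha}$. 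On the high-probability event $\delta \|Q_\omega\|\,\|E\| \le 1/2$, controlled via Lemma \ref{lemma.norm of qomega}, a Schur-complement/Neumann-series argument shows that $A + \delta Q_\omega - z_j$ is invertible if and only if the random matrix
\begin{align}
E_{-+}^\delta := E_{-+} - \delta\, E_- Q_\omega E_+ + O(\delta^2)
\end{align}
is invertible.

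The decisive and hardest step is a Gaussian lower-tail estimate on $\sigma_{\min}(E_{-+}^\delta)$. Substituting \eqref{random_pertubation} into the leading term, the matrix $M := E_- Q_\omega E_+ \in \CC^{N\times N}$ is complex Gaussian, and the ellipticity assumption \eqref{CS constant}--\eqref{CS constant2} on $S_1|_{\mathcal H_{1,\alpha}}$ and $S_2|_{\mathcal H_{2,\alpha}}$ forces the covariance of $M$ to be non-degenerate, with smallest eigenvalue $\gtrsim C_S^4$. Combining a small-ball/anti-concentration estimate for the smallest singular value of a non-degenerate complex Gaussian matrix (so that $\sigma_{\min}(\delta M) \gtrsim \delta C_S^2$) with the deterministic perturbation $E_{-+}$ of size $\sqrt{\alpha}$, and with the bound on $\|Q_\omega\|$ from Lemma \ref{lemma.norm of qomega}, yields the probability estimate \eqref{eq:prob bound} at a single $j$. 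The specific exponent structure, with the term $\|S_1\|\,\|S_1\|_{\operatorname{Tr}}\,\|S_2\|_{\HS}^2$ subtracted from $\alpha\delta^{-2}$, should emerge precisely from the cost of discarding the Neumann-series exceptional event. The joint assumption $(S_1,S_2) \in \bigcap_{j} \mathfrak S(z_j,\alpha)$ provides a single constant $C_S$ across all $j\in\mathcal J$, so that a countable union bound (absorbed into $C_0$) upgrades this to the simultaneous statement for all $j\in\mathcal J$. The main obstacle I anticipate is balancing the Gaussian small-ball estimate against the deterministic $E_{-+}$ contribution so that the resulting exponent matches the precise form displayed in \eqref{eq:prob bound}, rather than a weaker version only polynomial in $\alpha\delta^{-2}$.
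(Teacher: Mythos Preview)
Your overall architecture---Fredholm stability $\Rightarrow$ reduce to invertibility at $z_j$ $\Rightarrow$ Grushin on the low singular vectors $\Rightarrow$ analyze the random effective matrix $E_{-+}^\delta$---matches the paper. Two points, however, diverge from the actual argument, and one of them is a genuine gap.

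\smallskip
\noindent\textbf{No small-ball estimate is needed, and the exponent is not obtained by ``balancing''.} For Theorem~\ref{thm:general result} you only need $E_{-+}^\delta$ to be \emph{invertible}, not a quantitative lower bound on $\sigma_{\min}(E_{-+}^\delta)$. The paper fixes a single event
\[
\mathcal Q_M \;=\; \Big\{\big\|S_{1,2}\big(\textstyle\sum_{j,k}\alpha_{j,k}\,e^1_j\otimes e^2_k\big)S_2\big\|_{\mathrm{HS}} < M\Big\},
\qquad M \uparrow \delta^{-1}\alpha^{1/2}\|S_1\|^{-1/2},
\]
on which the Neumann series converges. After the change of basis in Proposition~\ref{claim.1} and the measure comparison in Lemma~\ref{lemma.1}, the conditional law of the effective $N\times N$ matrix is absolutely continuous with respect to Lebesgue, so $\{\det E_{-+}^\delta = 0\}$ is a null event inside $\mathcal Q_M$. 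Hence the \emph{entire} probability loss is $\P(\mathcal Q_M^c)$, and Lemma~\ref{lemma.norm of qomega} applied to $S_{1,2}$ and $S_2$, together with the factorization $S_1=S_{1,1}S_{1,2}$ with $\|S_{1,i}\|_{\mathrm{HS}}^2=\|S_1\|_{\mathrm{Tr}}$ and $\|S_{1,i}\|=\|S_1\|^{1/2}$, produces exactly the exponent in \eqref{eq:prob bound}. There is no balancing against the deterministic $E_{-+}$. The anti-concentration route you sketch is the content of the separate Theorem~\ref{theorem:quantitative}, not of Theorem~\ref{thm:general result}.

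\smallskip
\noindent\textbf{The union bound does not close.} Your final step---``a countable union bound (absorbed into $C_0$) upgrades this to the simultaneous statement for all $j\in\mathcal J$''---fails when $\mathcal J$ is infinite: summing the same positive bound $C_0\exp(\dots)$ over countably many $j$ gives $+\infty$, and this cannot be absorbed into a constant. The paper avoids a union bound altogether. The point is that $\mathcal Q_M$ is a \emph{single} event, independent of $j$: on $\mathcal Q_M$ the perturbed Grushin problem is well-posed at every $z_j$, and for each $j$ the set $\{\det E_{-+,j}^\delta = 0\}$ is null by absolute continuity. A countable union of null sets is null, so
\[
\P\Big(\textstyle\bigcap_{j\in\mathcal J}\{A+\delta Q_\omega - z_j \text{ invertible}\}\Big)\;\ge\;\P(\mathcal Q_M),
\]
with no dependence on $|\mathcal J|$. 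This is where the uniformity assumption $(S_1,S_2)\in\bigcap_j\mathfrak S(z_j,\alpha)$ is actually used: it guarantees that the non-degeneracy in Proposition~\ref{claim.1} (and hence absolute continuity) holds at every $z_j$ on the same event $\mathcal Q_M$.
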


We have a few remarks about Theorem \ref{thm:general result}.

Assumption \ref{condition:5} can be replaced with $S_2(h,\alpha)$ being trace-class, {in which case in \eqref{eq:prob bound} all norms of $S_1$ are replaced by the same norms of $S_2$ and vice versa.}

The probabilistic bound \eqref{eq:prob bound} is close to $1$  if
\begin{align}
\frac{C_0 \norm{S_1}\norm{S_1}_{\operatorname{Tr}} \norm{S_2}_{\operatorname{HS}}^2 - \alpha \delta^{-2}}{2\norm{S_1}^{3/2} \norm{S_2}} 
\end{align}
is a large negative number. This can be done by increasing $\alpha$ and/or decreasing $\delta$. However, if $\alpha$ is large, the operators $S_1$ and $S_2$ may need to be modified to satisfy Hypothesis \ref{def1}, which may increase $\norm{S_1}_{\operatorname{Tr}}$ and $\norm{S_2}_{\operatorname{HS}}$.

Alternatively, we can fix $\alpha$ and construct $S_1$ and $S_2$ to have norm $1$, but at the cost of having large Hilbert--Schmidt and trace norms. 
This forces us to select a small $\delta$. 

One could artificially satisfy Hypothesis \ref{def1} by setting $S_1$ and $S_2$ to be the orthogonal projections onto ${\rm{span}}(\set{e_1,\dots, e_N})$ and ${\rm{span}} (\set{f_1,\dots, f_N})$ respectively.
In this case, $\norm{ S_1 } = \norm {S_2} = 1$ and $\norm{S_1}_{\operatorname{Tr}} = \norm{S_2}_{\operatorname{HS}}^2 = N$. 
We would then find that $A + \delta Q_\omega$ has a discrete spectrum with probability close to $1$ as long as $\delta \ll N^{-1}\sqrt{\alpha}$. 
Understanding how $N$ depends on $\alpha$ on a case-by-case basis would further refine this estimate.

We furthermore note that Theorem \ref{thm:general result} applies to unbounded closed operators such that $\rho_F^{(0)}(A)$ is non-empty (i.e. operators $A$ such that there is a $z_0 \in \C$ such that $A- z_0$ is Fredholm of index $0$) -- not just Fredholm operators of index $0$. 

We also remark that for the previous discussed operators (Seeley's operator \eqref{eq:seeleyoperator} and $D_h(\beta)$ \eqref{eq:first define of Dh}), $\rho_F^{(0)}(A) = \rho_F ^{(0)}(D_h(\beta)) = \C$.
Operators with several connected components of $\rho_F^{(0)}(A)$ do exist. For example, Toeplitz operators on the classical Hardy space $H^2 (\T)$ have Fredholm index related to the winding number of the image of the circle under the symbol \cite[Theorem 7.26]{douglas2012banach}. 

The assumption \ref{condition:2} in Theorem \ref{thm:general result} will be satisfied for $D_h(\beta)$ by using the ellipticity {at infinity} of the operator and the compact embedding theorems of Sobolev spaces.

We next state a more quantitative version of Theorem \ref{thm:general result}.
Instead of proving $A + \delta Q_\omega- z_0$ is invertible with certain probability (which would imply a discrete spectrum), we show that the smallest singular value cannot be too small (which implies invertibility).

\begin{theorem}\label{theorem:quantitative}
Given the same assumptions as Theorem \ref{thm:general result}, for all $\delta >0$ and $z_0 \in D_j$ (for some $j\in \mathcal{J}$), \edit{in the event that $\norm{Q_\omega}_{HS}<\infty$ (which occurs with probability $1$)
}
let $0\le t_{1,\delta} \le t_{2,\delta} \le \cdots$ denote the singular values\footnote{
\edit{
The fact that the singular values are discrete is not immediate and is discussed after the statement of the theorem.
}
}
of $A+\delta Q_\omega - z_0$, and fix $\alpha_{\rm{max}} >0$.
Then there exist constants $C_0,C_1,C_2> 0 $ such that for $a > 0$ satisfying:
    \begin{align}
        a \le N^{NC_2} \delta ^N \alpha^{-(N-2)/2} \label{eq:570}
    \end{align}
    we have that
    \begin{align}
        \P(t_{1,\delta} \le a) \le C_0 \exp\left( \frac{C_0 \norm{S_1}\norm{S_1}_{\operatorname{Tr}} \norm{S_2}_{\operatorname{HS}}^2 -\alpha \delta ^{-2}}{2 \norm{ S_1}^{3/2} \norm{S_2}} \right) +   \frac{C_0\alpha ^{2 + (N-1)/2}}{N^{C_1N}\delta ^{N-2}C_{S}^{2N}  \norm{S_{1}}^{3/2}}a \label{eq:574}
    \end{align}
for $\alpha < \alpha _{\rm{max}}$ where {$C_S$ and $N(\alpha)$ are defined in Hypothesis \ref{def1}.}
\end{theorem}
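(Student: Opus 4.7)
The plan is to adapt the Grushin problem framework of Hager--Sj\"ostrand to reduce the event $\set{t_{1,\delta}\le a}$ to an event about the smallest singular value of an effective random $N\times N$ matrix, and then to apply a Gaussian small-ball inequality. The two terms of \eqref{eq:574} correspond respectively to a large-deviation bound on $\norm{Q_\omega}$ on a good event and to the small-ball estimate. For the first, let $\Omega_{\mathrm{good}}\coloneqq \set{\omega : \delta\norm{Q_\omega}\le c_0\sqrt{\alpha}}$ for a suitable absolute constant $c_0>0$; exactly as in the proof of Theorem~\ref{thm:general result}, Gaussian concentration applied to $\norm{Q_\omega}$ (writing $Q_\omega = S_1 G_\omega S_2$ with $G_\omega=\sum_{j,k}\alpha_{j,k}e_j^1\otimes e_k^2$ and using the trace-class structure of $S_1$) yields
\[
\P(\Omega_{\mathrm{good}}^c) \le C_0 \exp\pr{\frac{C_0 \norm{S_1}\norm{S_1}_{\operatorname{Tr}} \norm{S_2}_{\operatorname{HS}}^2 - \alpha\delta^{-2}}{2\norm{S_1}^{3/2}\norm{S_2}}},
\]
which is the first term of \eqref{eq:574}; it remains to bound $\P(\set{t_{1,\delta}\le a}\cap \Omega_{\mathrm{good}})$.

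On the connected component $D_j$ containing $z_0$, combine assumption~(\ref{condition:2}) with analytic Fredholm theory to guarantee that $(A-z_0)^*(A-z_0)$ has discrete spectrum near $0$, and extract orthonormal systems $\set{e_i}_{i=1}^N$, $\set{f_i}_{i=1}^N$ spanning the small-singular-value subspaces $\mathcal H_{1,\alpha}, \mathcal H_{2,\alpha}$ of $A-z_0$ associated to singular values $\le \sqrt{\alpha}$, for which Hypothesis~\ref{def1} provides the lower bounds $\norm{S_1 v}\ge C_S\norm{v}$ on $\mathcal H_{1,\alpha}$ and $\norm{S_2 w}\ge C_S\norm{w}$ on $\mathcal H_{2,\alpha}$. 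Define $R_+u \coloneqq (\ip{u}{f_i})_{i=1}^N$ and $R_- u_-\coloneqq \sum_{i=1}^N (u_-)_i e_i$, and form the block operator $\mathcal P_\delta \coloneqq \mat{P_\delta & R_- \\ R_+ & 0}$ with $P_\delta\coloneqq A+\delta Q_\omega -z_0$. A standard Grushin argument shows that $\mathcal P_0$ is bijective with bounded inverse $\mathcal E_0 = \mat{E_0 & E_{+,0} \\ E_{-,0} & E_{-+,0}}$, where $\norm{E_{-+,0}}\le\sqrt{\alpha}$ and $E_{+,0}, E_{-,0}$ are quasi-isometric onto $\mathcal H_{1,\alpha}$ and $\C^N$ respectively (with errors $O(\sqrt{\alpha})$). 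On $\Omega_{\mathrm{good}}$ the Neumann series $\mathcal E_\delta = \mathcal E_0 \sum_{k\ge 0}(-\delta \mathcal Q \mathcal E_0)^k$, with $\mathcal Q = \diag(Q_\omega,0)$, converges, and the Schur complement identity delivers the quantitative equivalence $t_{1,\delta}\le a \implies s_{\min}(E_{-+,\delta})\le C a \norm{\mathcal E_\delta}^2$.

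Expanding the Neumann series to first order,
\[
E_{-+,\delta} = E_{-+,0} - \delta (E_{-,0} S_1)\, G_\omega\, (S_2 E_{+,0}) + \mathcal R_\omega,
\]
where $\norm{\mathcal R_\omega}\lesssim \delta^2\norm{Q_\omega}^2 \norm{\mathcal E_0}^3$. The ellipticity bounds of \eqref{CS constant} combined with the quasi-isometry of $E_{\pm,0}$ onto $\mathcal H_{1,\alpha}, \mathcal H_{2,\alpha}$ ensure that the linear-in-$\delta$ term is a genuine $N\times N$ complex Gaussian matrix whose covariance bilinear form is bounded below in every direction by a quantity of order $\delta^2 C_S^2$, up to tame factors in $\norm{S_1},\norm{S_2}$. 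A Sankar--Spielman--Teng type small-ball inequality for this Gaussian random matrix --- applied either to the random polynomial $\det(E_{-+,\delta})$ or via the standard lower-tail estimate for the smallest singular value of a Gaussian plus deterministic matrix --- yields
\[
\P(s_{\min}(E_{-+,\delta}) \le s \mid \Omega_{\mathrm{good}}) \le \frac{C\,\alpha^{2+(N-1)/2}}{N^{C_1 N}\,\delta^{N-2}\,C_S^{2N}\,\norm{S_1}^{3/2}} \cdot s,
\]
valid for $s$ exceeding the deterministic remainder $\norm{\mathcal R_\omega}$. The condition \eqref{eq:570} on $a$ is precisely what guarantees this last requirement for $s\asymp a$, reproducing the second term of \eqref{eq:574}.

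The main difficulty is to verify that the random matrix $(E_{-,0}S_1)\,G_\omega\,(S_2 E_{+,0})$ retains non-degenerate Gaussian fluctuations of scale at least $\delta C_S$ in every direction, despite the conjugation by $S_1,S_2$ whose singular values may be arbitrarily small elsewhere. This is precisely the role of Hypothesis~\ref{def1}: its lower bounds on $S_1,S_2$ restricted to $\mathcal H_{1,\alpha},\mathcal H_{2,\alpha}$ ensure that the effective random matrix has the required anti-concentration. Balancing this against convergence of the Neumann series on $\Omega_{\mathrm{good}}$, and the requirement that the quadratic remainder $\mathcal R_\omega$ be dominated by $s\asymp a$, is what produces the precise form of the constraint \eqref{eq:570} and the intricate $N$-dependence of the prefactor in \eqref{eq:574}.
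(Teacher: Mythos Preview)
Your overall architecture matches the paper's: split off a bad event where $\delta\norm{Q_\omega}$ is large (producing the first term of \eqref{eq:574}), on the good event set up the Grushin problem for $A-z_0$, reduce via the Schur complement to the smallest singular value of the effective $N\times N$ matrix $E_{-+}^\delta$, and feed this into a Gaussian anti-concentration estimate. Your use of Hypothesis~\ref{def1} to bound the covariance of the linear-in-$\delta$ term $E_-^0 Q_\omega E_+^0$ from below by $C_S^2$ in each direction is also the right idea (this is Proposition~\ref{claim.1} in the paper).

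The genuine gap is in your handling of the nonlinear remainder $\mathcal R_\omega$ and, tied to that, your explanation of where the constraint \eqref{eq:570} comes from. On the good event one only has $\norm{\mathcal R_\omega}\lesssim (\delta\norm{Q_\omega})^2/\sqrt{\alpha}\lesssim \sqrt{\alpha}$, which is \emph{not} small compared to $a$; and \eqref{eq:570} is an \emph{upper} bound on $a$, so it cannot possibly force $s\asymp a$ to exceed $\norm{\mathcal R_\omega}$. If you simply absorb $\mathcal R_\omega$ into $s$ before applying a small-ball inequality to the Gaussian linear part, the bound you get is of order $\sqrt{\alpha}$ uniformly in $a$, hence useless as $a\to 0$.

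The paper treats the remainder by an entirely different mechanism. Writing $E_{-+}^\delta = E_{-+}^0 + \delta T_1\bigl((\alpha_{j,k})_{1\le j,k\le N} + \tilde T\bigr)T_2$, it invokes Lemma~\ref{lemma.1} (a Jacobian/change-of-variables estimate from \cite{Hager2008}) to show that on $\mathcal Q_M$ the law of $(\alpha_{j,k})_{1\le j,k\le N}+\tilde T$ is dominated by $(1+O(\delta M^3/\sqrt\alpha))\,\mu_N$. This absorbs the full nonlinear dependence into a multiplicative constant on the density, without ever comparing $\norm{\tilde T}$ to $a$. The anti-concentration input is then Lemma~\ref{lemma.2'}, the Hager--Sj\"ostrand bound on $\P(|\det(D+V_\omega)|\le c)$ for a pure Gaussian $V_\omega$, after passing from $t_1(E_{-+}^\delta)$ to $|\det E_{-+}^\delta|$ via $|\det E_{-+}^\delta|\le t_1(E_{-+}^\delta)\norm{E_{-+}^\delta}^{N-1}$. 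The constraint \eqref{eq:570} comes from that lemma's applicability condition \eqref{eq:889} on $c$, once one tracks $c = aC^{N-1}\alpha^{(N-2)/2}\delta^{-N}\det(T_1T_2)^{-1}$; it has nothing to do with dominating the remainder.
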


\edit{
Discreteness of the singular values of $A+\delta Q_\omega -z_0$ follows from the fact that the essential spectrum is invariant under compact perturbations.
Indeed, let $\mathcal A:=(A-z_0)^*(A-z_0)$ and $\mathcal B:=(A-z_0 +\delta Q_\omega)^* (A-z_0 +\delta Q_\omega).$
If we can show that
\begin{equation}\label{eq:587}
\begin{split}
    &(\mathcal A+i)^{-1}-(\mathcal B+i)^{-1}\\
    &\qquad\qquad\qquad=(\mathcal B+i)^{-1} (\delta (Q_{\omega}^*(A-z_0)+(A-z_0)^*Q_{\omega} + \delta Q_{\omega}^*Q_{\omega}))(\mathcal A+i)^{-1} 
    \end{split}
\end{equation}
is compact, then because $\mathcal A$ has only discrete spectrum, then $\mathcal B$ has discrete spectrum (see \cite[\S XIII.4, Corollary 1]{reed1978iv}).
Compactness of \eqref{eq:587} follows because $(A-z_0)(\mathcal{A}+i)^{-1}$ and $(\mathcal{B}+i)^{-1} (A-z_0)^*$ are bounded (the first can be shown by taking the polar decomposition of $A$ and using functional calculus, the second by the Kato--Rellich theorem) and $Q_\omega$ is compact. 
}

{
We note that applications of Theorem \ref{theorem:quantitative} should be for $a$ close to zero, and the required upper bound on $a$ \eqref{eq:570} is a technicality needed to apply a random matrix theory result.
Heuristically, Theorem \ref{theorem:quantitative} states that as $a$ goes to zero, the probability the smallest singular value of $A+ \delta Q_\omega - z_0$ is less than $a$ goes to zero.
Indeed, the first term on the right-hand side of \eqref{eq:574} is the same term appearing in \eqref{eq:prob bound}, which is independent of $a$.
In the case of TBG, this term is exponentially small in $h$ (see Theorem \ref{theorem_pert_of_DhB}).

The second term must also be treated on a case-by-case basis, in particular because it depends on $N$ (which depends on $\alpha$ and $h$).
In the case of TBG, if we fix $\alpha$, let $\delta = h^{4}$, and construct $S_1$ and $S_2$ as in Theorem \ref{theorem_pert_of_DhB}, we can apply \cite[Proposition 4.2]{becker2024absence} which gives us that $N \asymp h^{-2}$.
In this case, there exist positive constants $C_3$ and $C_4$ such that the second term on the right-hand side of \eqref{eq:574} is bounded above by
\begin{align}
    \frac{ a h^8}{(C_3 h^2)^{h^{-2}C_4}}
\end{align}
which rapidly goes to zero if either $a$ or $h$ goes to zero.
}

{
Estimating the size of the smallest singular value of a randomly perturbed operator has been well-studied in the context of random matrix theory.
Sankar, Spielmann, and Teng in \cite{sankar2006smoothed} estimate the probability that the smallest singular value of $X + \delta Q$ is small for $X$ a deterministic matrix $N\times N$, $Q$ a random matrix $N\times N$ with Gaussian i.i.d. entries, and $\delta >0$.
There have been various extensions or related work, see for instance Rudelson and Vershynin \cite{rudelson2008least} who estimate the size of the smallest singular value of a matrix whose entries are i.i.d. subgaussians, Tao and Vu \cite{tao2008random,tao2009smooth} where $Q$ has i.i.d. entries of bounded second moment, and Cook \cite{cook2018lower} where $Q$ has independent but not identically distributed entries under additional assumptions.
See also Basak, Paquette, and Zeitouni \cite[Remark 1.3]{basak2020spectrum} who describe random perturbations $Q$ such that the smallest singular value of $X + \delta Q$ is small.
}

\section{Proof of main result}\label{section2}

We begin by stating a lemma that provides a probabilistic norm bound of $Q_\omega$ as defined in \eqref{random_pertubation}, proven in \cite[Remark 6.2]{Hager2008} and \cite[Equation 8.5]{Hager2008}.
\begin{lemma}\label{lemma.norm of qomega}
There exists a $C_0 > 0$ such that for all $a >0$:
\begin{align}
\P (\norm{Q_\omega}_{\operatorname{HS}}^2 \ge a) \le C_0 \exp\left ( \frac{C_0 \norm{S_1}_{\operatorname{HS}}^2 \norm{S_2}_{\operatorname{HS}}^2 -a }{2\norm{S_1} \norm{S_2}} \right ).
\end{align}
\end{lemma}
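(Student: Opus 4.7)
The plan is to recognize $\|Q_\omega\|_{\operatorname{HS}}^2$ as a weighted sum of i.i.d.\ exponential random variables and then apply a Chernoff bound. First, using the singular value decompositions $S_1 = \sum_j s_j^1\, e_j^1 \otimes \tilde e_j^1$, $S_2 = \sum_k s_k^2\, \tilde e_k^2 \otimes e_k^2$ together with the unitary invariance of the joint law of $(\alpha_{j,k})$, one rewrites $Q_\omega = \sum_{j,k} s_j^1 s_k^2 \alpha_{j,k}\, e_j^1 \otimes e_k^2$. Computing the Hilbert--Schmidt norm in this orthonormal expansion yields
\begin{align}
\|Q_\omega\|_{\operatorname{HS}}^2 \;=\; \sum_{j,k} (s_j^1)^2 (s_k^2)^2\, |\alpha_{j,k}|^2 \;=\; \sum_{j,k} \sigma_{j,k}\, E_{j,k},
\end{align}
where $\sigma_{j,k} := (s_j^1 s_k^2)^2$ and $E_{j,k} := |\alpha_{j,k}|^2$ are i.i.d.\ $\operatorname{Exp}(1)$ (as the squared modulus of a standard complex Gaussian).

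Next I would apply Markov's inequality to $e^{t\|Q_\omega\|_{\operatorname{HS}}^2}$ for a parameter $t>0$ to be chosen. By independence and the MGF identity $\mathbb{E}[e^{sE}] = (1-s)^{-1}$ for $s<1$,
\begin{align}
\mathbb P\bigl(\|Q_\omega\|_{\operatorname{HS}}^2 \ge a\bigr) \;\le\; e^{-ta}\prod_{j,k}\frac{1}{1 - t\,\sigma_{j,k}},
\end{align}
valid whenever $t\,\sigma_{\max} < 1$, where $\sigma_{\max} = \max_{j,k}\sigma_{j,k} = \|S_1\|^2\|S_2\|^2$. Restricting to $t \le 1/(2\|S_1\|^2\|S_2\|^2)$ so that $t\sigma_{j,k} \le 1/2$ for every $j,k$, one uses the elementary bound $-\log(1-x)\le x + 2x^2$ on $[0,1/2]$ to obtain
\begin{align}
\log\mathbb E\bigl[e^{t\|Q_\omega\|_{\operatorname{HS}}^2}\bigr] \;\le\; t\sum_{j,k}\sigma_{j,k} \;+\; 2t^2\sum_{j,k}\sigma_{j,k}^2 \;\le\; t\,\|S_1\|_{\operatorname{HS}}^2\|S_2\|_{\operatorname{HS}}^2 \;+\; 2t^2\,\|S_1\|^2\|S_2\|^2\,\|S_1\|_{\operatorname{HS}}^2\|S_2\|_{\operatorname{HS}}^2,
\end{align}
where the second inequality uses $\sum_j (s_j^1)^4 \le \|S_1\|^2 \sum_j (s_j^1)^2 = \|S_1\|^2\|S_1\|_{\operatorname{HS}}^2$, and similarly for $S_2$.

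Finally I would choose $t$ to obtain the stated form of the bound. Taking $t = 1/(2\|S_1\|^2\|S_2\|^2)$ makes the quadratic correction term comparable to the linear term, yielding an estimate of the shape $\exp\bigl((C\|S_1\|_{\operatorname{HS}}^2\|S_2\|_{\operatorname{HS}}^2 - a)/(2\|S_1\|^2\|S_2\|^2)\bigr)$; the precise form claimed in the lemma, with $2\|S_1\|\|S_2\|$ in the denominator, is then obtained by either choosing $t$ differently on the two regimes $\|S_1\|\|S_2\|\lessgtr 1$, or by absorbing constants into $C_0$, together with the trivial bound $\mathbb P(\cdot)\le 1$ outside the regime where the exponential is small. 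I expect the main (albeit mild) obstacle to be matching the exact constants and denominator exponents in the stated form; the exponential concentration itself is a direct consequence of steps 1--3 above and the sub-exponential nature of weighted sums of independent exponentials. The cited references \cite{Hager2008} carry out the bookkeeping in this form.
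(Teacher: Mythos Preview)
The paper does not give its own proof of this lemma; it simply records the statement and cites \cite[Remark~6.2 and Eq.~(8.5)]{Hager2008}. Your Chernoff-bound argument---rewrite $\|Q_\omega\|_{\operatorname{HS}}^2$ as $\sum_{j,k}(s_j^1 s_k^2)^2|\alpha_{j,k}|^2$, recognize the $|\alpha_{j,k}|^2$ as i.i.d.\ $\operatorname{Exp}(1)$, and bound the moment generating function---is the standard and correct route for sub-exponential tails of weighted sums of exponentials, and is the approach one expects the cited reference to take.

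Your closing caveat is honest and accurate: the natural choice $t = 1/(2\|S_1\|^2\|S_2\|^2)$ yields a denominator $2\|S_1\|^2\|S_2\|^2$ rather than the stated $2\|S_1\|\|S_2\|$, and these are not interchangeable uniformly in $S_1,S_2$. Reconciling the two as written requires either a case split on the size of $\|S_1\|\|S_2\|$ or invoking the trivial bound $\mathbb P(\cdot)\le 1$ in the regime where the exponential is $\ge 1$ anyway (note the lemma is only informative once $a > C_0\|S_1\|_{\operatorname{HS}}^2\|S_2\|_{\operatorname{HS}}^2$). This is bookkeeping rather than a gap; the concentration mechanism you describe is the whole content of the proof.
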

Under the assumption that $S_1$ is trace-class, we can write it as a product of Hilbert--Schmidt operators whose Hilbert--Schmidt norms are both the square root of the trace norm of $S_1$ (this is further expanded in the proof of Theorem \ref{thm:general result}{)}.
We can therefore deduce from Lemma \ref{lemma.norm of qomega} a bound on the trace norm of $Q_\omega$:
\begin{align}
    \P(\norm{Q_\omega}_{\operatorname{Tr}}^2 \ge a ) \le C_0 \exp\left ( \frac{C_0 \norm{S_1}_{\operatorname{Tr}}\norm{S_2}_{\operatorname{HS}}^2 - a \norm{S_1}_{\operatorname{Tr}}^{-1}}{2\norm{S_1}^{1/2} \norm{S_2}} \right ).\label{eq:trace_bound_qomega}
\end{align}
{By continuity of measures, we get that almost surely $Q_\omega$ has finite trace norm.}

Equation \eqref{eq:trace_bound_qomega} follows by decomposing $S_1$ as $S_1 = S_{1,1}S_{1,2}$ (for appropriately chosen $S_{1,1},S_{1,2}$ as discussed in the proof of Theorem \ref{thm:general result}), and using that
\begin{align}
    \norm{Q_\omega}_{\operatorname{Tr}} \le \norm{S_{1,1}}_{\operatorname{HS}}\norm{S_{1,2}\circ \left( \sum_{j,k=1}^\infty \alpha_{j,k} e_j^1\otimes e_k^2 \right)\circ S_2} _{\operatorname{HS}}.
\end{align}

We now prove Theorem \ref{thm:general result}. 
This proof follows the method of proof of \cite{Hager2008}. 
However, the main difference is that our operators do not necessarily have semiclassically elliptic symbols.
\begin{proof}[Proof of Theorem \ref{thm:general result}]
We begin by selecting one $j\in \mathcal{J}$, setting $D \coloneqq D_j$ and $z_0 \coloneqq z_j$ given in the hypothesis. Next set $A^\delta \coloneq  A + \delta Q_\omega$. Because $Q_\omega$ is compact, and compact perturbations of Fredholm operators are Fredholm operators with the same index, $A^\delta - z$ is Fredholm of index zero for all $z\in D$. Therefore, by Proposition \ref{prop1}, the spectrum of $A^\delta$ either contains a connected component $D \subset \rho_F^{(0)}(A)$ or $D \cap \Spec(A)$ is discrete. 

Our overall goal is to show (with appropriate probability) that $z_0$ is not in the spectrum of $A^\delta$, thus proving $D \cap \Spec(A^\delta)$ is discrete.
To achieve this, we will build an {auxiliary operator called} a Grushin problem. 

\noindent \textbf{Step 1. Build a Grushin problem for the unperturbed operator} 

{{Equip $ \SAA \coloneq (A-z_0)^*(A-z_0)$} with the the {analogously defined} domain to \eqref{eq:domain} and {$\tilde \SAA \coloneq (A - z_0) (A-z_0)^*$} with domain \eqref{eq:domain} such that both operators are self-adjoint.}
Let $0 \le t_1^2 \le t_2^2 \le \cdots$ be the eigenvalues of $\SAA$ with {an} orthonormal {basis of} eigenvectors $\set{e_i}_{i\in \N}$ such that $t_i \in [0,\infty)$.
Suppose $t_1= t_2 = \cdots = t_n =0$ {and $t_{n+1} > 0$}. 
Then because {$(A-z_0)$} is Fredholm of index $0$, and $\operatorname{Ker}(\Theta)= \operatorname{Ker}(A-z_0)$, the dimension of the {kernel} of {$(A-z_0)^*$} has dimension $n$.
Let $f_1,\dots f_n$ be an orthonormal basis for this space, so that $\tilde \SAA f_i = 0 = t_i^2 f_i$.
If $t_i \neq 0$, then define
\begin{align}
    f_i \coloneq \frac{1}{t_i} (A - z_0)e_i.
\end{align}
Therefore:
\begin{align}
    \tilde \SAA f_i = (A - z_0) (A- z_0)^* f_i = \frac{1}{t_i} (A -z_0) (A-z_0)^*(A- z_0) e_i = t_i (A- z_0)e_i = t_i^2 f_i.
\end{align}

We then get that $\set{f_i}_{i\in \N}$ {is an} orthonormal {system of} eigenvectors of $\tilde \SAA$ corresponding to the same eigenvalues of $\SAA$ such that:
\begin{align}
(A - z_0) e_i = t_i f_i  && \text{and} && (A - z_0)^* f_i = t_i e_i
\end{align}
for all $i\in \N$. 
{
We moreover claim that $\set{f_i}_{i\in \N}$ is an orthonormal basis of eigenvectors of $\tilde \SAA$ (which implies $\SAA$ and $\tilde \SAA$ have the same eigenvalues).
Indeed, suppose {$f\in D((A-z_0)^*)$ is such that $\ip{f}{f_i} = 0$ for all $i\in \N$, it suffices to show that $f = 0$.
We have that $\ip{(A-z_0)^* f}{e_i}= \ip{f}{(A-z_0)e_i}$.
If $i \le n$, then $(A-z_0)e_i = 0$ and if $i >n$, then $\ip{f}{(A-z_0)e_i} = t_i \ip{f}{f_i} = 0$. 
Therefore $\ip{(A-z_0)^* f}{e_i}= 0$, so that $f \in \operatorname{Ker}((A-z_0)^*)= \operatorname{Ker}\tilde \SAA$.
This implies $f=0$ because $f_1,\dots,f_n$ span $\operatorname{Ker}\tilde \SAA$.
{Because $D((A-z_0)^*)$ is dense in $\mathcal H$, we get by approximation that if $\ip{f}{f_i} = 0$ for all $i\in \N$, then $f =0$.}
}}

Let $\alpha > 0$, and define:
\begin{align}
N  = \min \set{ j\in \Z : t_j^2 > \alpha}-1. \label{eq:694}
\end{align}
{For now, we will assume that $N>0$, the case when $N=0$ will be treated at the end of this proof.}
Now define $R_- = \sum _1^N f_i \otimes \delta _i$ and $R_+ = \sum _1^N \delta _i \otimes e_i$ (where $\delta_i$ is the standard basis of $\C^N$), so that
\begin{align}
\mathcal{P}\coloneqq \mat{A - z_0 & R_- \\ R_+ & 0 } \colon {D(A)} \times \C^N \to \mathcal{H} \times \C^N
\end{align}
is invertible with inverse
\begin{align}
\mathcal{E} \coloneqq \mat{\sum _{N +1}^{\infty} \frac{1}{t_i} e_i \otimes f_i && \sum_1^N e_i \otimes \delta _i\\
\sum_1^N \delta _i \otimes f_i && - \sum_1^N t_i \delta _i \otimes \delta _i } \coloneq \mat{ E^0 & E_+^0 \\ E^0_-  & E_{-+}^0}\label{eq:656}
\end{align}
as in {\cite[\S 4]{Sjo2009}}. 
Using \eqref{eq:694}, we can bound (or explicitly compute) the norms of the block entries of $\mathcal{E}$ as
\begin{align}\label{eq:770}
\begin{array}{cc}
       \norm{E^0} \le \frac{1}{\sqrt{\alpha}}, & \norm{E_+^0} = 1,\\
    \norm{E_-^0} = 1, & \norm{E_{-+}^0} \le \sqrt{\alpha}.
\end{array}
\end{align}

\noindent \textbf{Step 2. Build a Grushin problem for the perturbed operator}

We now define
\begin{align}
\mathcal{P}^\delta \coloneqq \mat{A^\delta - z_0 & R_- \\ R_+ & 0 } \colon D(A) \times \C^N \to \mathcal{H} \times \C^N 
\end{align}
so that
\begin{align}
    \mathcal{P}^\delta \mathcal{E} = 1 + \mat{\delta Q_\omega E^0 & \delta Q_\omega E_+^0 \\ 0 & 0 } \coloneq 1 + K.
\end{align}
The term $1+K$ can be inverted via a Neumann series provided that $\norm{K} < 1 $, which is satisfied if
\begin{align}
\delta \alpha^{-1/2} \norm{Q_\omega} {< (1-\e_0)} < 1 \label{eq:condition on Q_omega}
\end{align}
{for some fixed $\e_0 \in (0,1)$.}
When this is the case, {$\mathcal{P}^\delta$ is bijective with inverse}
\begin{align}
\mathcal{E}^\delta\coloneqq (\mathcal{P}^\delta )^{-1}  = \mat{E^\delta & E_+ ^\delta \\ E_- ^\delta & E_{-+}^\delta}.
\end{align}
By the Schur complement formula, $z_0 \notin \Spec A^\delta$ if and only if $E_{-+}^\delta$ is invertible. By construction, $E_{-+}^\delta$ is an $N\times N$ matrix. It now suffices to estimate the probability $\det E_{-+}^\delta$ is nonzero.

By the Neumann series construction, the terms in $\mathcal E ^\delta $ can be computed by writing $\mathcal{E}^\delta = \mathcal{E} (1+K)^{-1} = \mathcal{E}\sum_0^\infty( -K)^j$. 
In this case, we get that
\begin{align}
E_{-+}^\delta= E^0_{-+} + \delta E^0_- Q_\omega E^0_+ + \sum _{j=2}^\infty E^0_- (\delta Q_\omega E^0)^{j-1} \delta Q_\omega E^0_+. \label{eq:607}
\end{align}

Define
\begin{align}
T \coloneqq \delta ^{-1}\left  ( \sum _{j=2}^\infty E^0_- (\delta Q_\omega E^0)^{j-1} \delta Q_\omega E^0_+\right ) , && \hat Q_\omega \coloneqq  E^0_- Q_\omega E^0_+  + T, \label{eq:313}
\end{align}
so that
\begin{align}
E_{-+}^\delta =   E^0_{-+} + \delta  \hat Q_\omega. \label{first Epm}
\end{align}
Note that there exists a $C> 0$ such that
\begin{align}
\norm{T}_{\operatorname{HS}}\le C \frac{\delta}{\sqrt{\alpha}} \norm{Q_\omega}_{\operatorname{HS}}^2 \label{eq:312}
\end{align}
by \eqref{eq:770}, {\eqref{eq:condition on Q_omega}}, and \eqref{eq:313}, and the fact that the Hilbert--Schmidt norm of $T$ is bounded by a constant times the norm of the first term in its summation expansion in \eqref{eq:313}.

\noindent \textbf{Step 3. Describe the law of $\hat Q_\omega$} 

{We reiterate that $z_0 \notin \Spec A^\delta$ if and only if $E_{-+}^\delta$ is invertible.
The remainder of the proof will show that $E_{-+}^\delta$ is invertible with high probability. 
By \eqref{first Epm}, $E_{-+}^\delta$ can be written as a sum of a deterministic $N\times N$ matrix and $\delta$ times a random $N\times N$ matrix $\hat Q_\omega$.
By \eqref{eq:313}, $\hat Q_\omega$ has a leading order term (in $\delta$) which we will describe in the following proposition.}

\begin{prop}\label{claim.1}
There exist orthonormal bases $\set{f^1_j} _{j = 1}^{N}$ and $\set{ f^2_j} _{j = 1}^{N}$ of $\C^N$ such that
\begin{align}
 E^0_- Q_\omega E^0_+  = \sum _{1 \le j,k\le N} s_{1,j} s_{2,k} \alpha _{j,k} f^1_j \otimes f^2_k
\end{align}
where $s_{1,j},s_{2,k}$ are such that ${0<}C_S \le s_{1,j},s_{2,k}\le \max (\norm{S_1}, \norm{S_2})$.
\end{prop}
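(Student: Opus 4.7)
The plan is to reduce $E^0_- Q_\omega E^0_+$ to an $N\times N$ matrix with i.i.d.\ complex Gaussian entries sandwiched between diagonal factors, by performing reduced singular value decompositions of the deterministic operators flanking the Gaussian ensemble. Factor $Q_\omega = S_1 \tilde Q S_2$ where $\tilde Q \coloneqq \sum_{m,n=1}^{\infty} \alpha_{m,n}\, e^1_m \otimes e^2_n$, and set $A_1 \coloneqq E^0_- S_1 \colon \mathcal H \to \C^N$ and $A_2 \coloneqq S_2 E^0_+ \colon \C^N \to \mathcal H$. Writing the reduced SVDs
\begin{equation*}
A_1 = \sum_{j=1}^N s_{1,j}\, f^1_j \otimes v^1_j, \qquad A_2 = \sum_{k=1}^N s_{2,k}\, v^2_k \otimes f^2_k,
\end{equation*}
with $\{f^1_j\}$, $\{f^2_k\}$ orthonormal bases of $\C^N$ and $\{v^1_j\}$, $\{v^2_k\}$ orthonormal systems in $\mathcal H$, a direct expansion using $(u\otimes v)w = u\ip{w}{v}$ then gives
\begin{equation*}
E^0_- Q_\omega E^0_+ \;=\; A_1 \tilde Q A_2 \;=\; \sum_{j,k=1}^N s_{1,j}\, s_{2,k}\, \gamma_{j,k}\, f^1_j \otimes f^2_k, \qquad \gamma_{j,k} \coloneqq \ip{\tilde Q v^2_k}{v^1_j}.
\end{equation*}

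The key step is verifying that $\{\gamma_{j,k}\}_{j,k=1}^N$ is a family of i.i.d.\ mean-zero complex Gaussians of variance one. Expanding,
\begin{equation*}
\gamma_{j,k} = \sum_{m,n} \alpha_{m,n}\, \ip{v^2_k}{e^2_n}\, \ip{e^1_m}{v^1_j},
\end{equation*}
so each $\gamma_{j,k}$ is a mean-zero linear combination of the $\alpha_{m,n}$ and hence complex Gaussian. Using $\E \alpha_{m,n} \overline{\alpha_{m',n'}} = \delta_{mm'}\delta_{nn'}$, Parseval's identity in the ONBs $\{e^1_m\}$ and $\{e^2_n\}$ of $\mathcal H$, and the orthonormality of $\{v^1_j\}$ and $\{v^2_k\}$ yields $\E \gamma_{j,k}\overline{\gamma_{j',k'}} = \delta_{jj'}\delta_{kk'}$; the pseudo-covariance vanishes since $\E \alpha_{m,n}\alpha_{m',n'} = 0$ for complex Gaussians. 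Relabeling $\gamma_{j,k}$ as $\alpha_{j,k}$ then gives the desired form.

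It remains to bound the singular values. Since $E^0_\pm$ are partial isometries of norm $\le 1$ by \eqref{eq:770}, the upper bounds $s_{1,j} \le \norm{S_1}$ and $s_{2,k} \le \norm{S_2}$ are immediate. For the lower bound on $s_{2,\min}$, the variational characterization gives
\begin{equation*}
s_{2,\min} = \inf_{0 \ne v \in \mathcal H_{2,\alpha}} \frac{\norm{S_2 v}}{\norm{v}} \ge C_S
\end{equation*}
directly from Hypothesis~\ref{def1}, using that $\Span(\{e_i\}_{i\le N}) = \mathcal H_{2,\alpha}$. The analogous bound for $s_{1,\min}$ reduces, via the Gram matrix $A_1 A_1^*$ on $\mathcal H_{1,\alpha}$, to the ellipticity lower bound of Hypothesis~\ref{def1} on $\mathcal H_{1,\alpha} = \Span(\{f_i\}_{i\le N})$. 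The only subtle point is this final reduction, whose Gram-matrix formulation formally involves $S_1^*$ restricted to $\mathcal H_{1,\alpha}$ rather than $S_1$; this is covered by the symmetric version of Hypothesis~\ref{def1} and is automatic in the TBG application of Theorem~\ref{Thm.1}, where the cutoff $\Op_h(\chi)$ is self-adjoint.
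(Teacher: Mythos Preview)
Your argument is essentially the paper's: take reduced SVDs of $E_-^0 S_1$ and $S_2 E_+^0$, then use rotational invariance of complex Gaussians. The only difference is packaging---where you compute the covariance of $\gamma_{j,k}=\ip{\tilde Q v_k^2}{v_j^1}$ directly and then relabel, the paper instead invokes unitary invariance of the law of $Q_\omega$ to \emph{choose} the bases $\{e_j^1\},\{e_k^2\}$ in \eqref{random_pertubation} adapted to those SVDs, so that the $\alpha_{j,k}$ in the statement are literally those of $Q_\omega$ rather than a new family equal only in law; for the downstream use in Lemma~\ref{lemma.1} this distinction is immaterial.

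Your flag on the $S_1^*$-versus-$S_1$ issue in bounding $s_{1,\min}$ is well taken: the paper's ``a similar argument'' for the $s_{1,j}$ glosses over exactly this point, and the quantity that arises from $A_1 A_1^*$ is $\inf_{0\ne v\in\mathcal H_{1,\alpha}}\|S_1^* v\|/\|v\|$ rather than the $S_1$-version stated in Hypothesis~\ref{def1}. As you note, it is automatic when $S_1$ is self-adjoint, as in the TBG application.
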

\begin{proof}
For a trace-class Fredholm operator {$B$} of index $0$, {its singular values are denoted by {$s_1(B)\ge s_2 (B)\ge \cdots$}. 
These are the eigenvalues of {$(B^*B)^{1/2}$} ordered in decreasing values and counting their multiplicities.}

With this notation, let $s_{1,j}$ denote the singular values of $E^0_- \circ S_1$ and let $s_{2,j}$ denote the singular values of $S_2 \circ E^0_+$. 
Note that $E^0_- \circ S_1$ and $S_2 \circ E^0_+$ are both rank $N$ operators, so that {$s_{2,j} = 0$ for $j \ge N+1$ and $E_-^0 \circ S_1$ has only $N$ singular values}. 
Both operators are bounded, so $s_{1,j}, s_{2,j} \le \max (\|S_1\|, \|S_2\|)$.

By Hypothesis \ref{def1}, we get a lower bound for the first $N$ singular values.
Indeed, if $(S_2 E_+^0)^*(S_2 E_+^0) u = (s_{2,N} )^2 u$ with $\norm{u} = 1$, then
\begin{align}
(s_{2,N})^2 = \ip{(s_{2,N})^2  u}{u} = \norm{S_2 E_+^0 u} ^2 \ge C_S ^2 \norm{E_+^0 u } ^2 = C_S^2 .
\end{align}
{The last equality uses that $E_+^0 = \sum_1^N e_i \otimes \delta _i$ (recall \eqref{eq:656}) so that $E_+^0$ is {an isometry}, and so $\norm{E_+^0 u } = \norm{u} =1$.
Because $s_{2,j}$ are decreasing in $j$, we get that $C_S \le s_{2,j} \le \max(\norm{S_1},\norm{S_2})$ for $j = 1,\dots, N$.
A similar argument can be used to establish the same bound for each $s_{1,j}$.
}

Because the law of Gaussian ensembles is invariant under unitary conjugations (see for instance \cite[\S 13]{Hager2008}), we are free to choose any two orthonormal bases of $\mathcal{H}$ when defining $Q_\omega$ (in \eqref{random_pertubation}). 
{We will choose these bases in the following way: }let $f^1_j$ and $f^2_j$ be orthonormal bases of $\C^N$ which are eigenfunctions of $\sqrt{(E_-^0 S_1)(E_-^0 S_1)^*}$ and $\sqrt{(S_2 E_+^0 )^* (S_2 E_+^0)}$, respectively, corresponding to eigenvalues $s_{1,j}$ and $s_{2,j}$.
Then, let $e^1_j$ and $e^2_j$ (for $j \in \N$) be orthonormal bases of $\mathcal{H}$ such that
\begin{align}
e^1_j = \frac{1}{s_{1,j}} (E_-^0 S_1)^* f^1_j &&\text{and} && e^2_j = \frac{1}{s_{2,j}} (S_2 E_+^0) f^2_j
\end{align}
for $j = 1, \dots, N$. 
By this construction, we see that
\begin{align}
E_-^0 Q_\omega E_+^0 = E_-^0 S_1 \left( \sum_{j,k=1}^\infty \alpha_{j,k} e^1_j \otimes e^2_k \right) S_2 E_+^0
= \sum_{1 \le j, k \le N} s_{1,j} s_{2,k} \alpha_{j,k} f_j^1 \otimes f_k^2. \qquad \qedhere
\end{align}
\end{proof}

Let $T_1 = \diag(\set{s_{1,j}}_{1\le j\le N})$ and $T_2 = \diag (\set{s_{2,j}}_{1 \le j\le N})$ where $s_{1,j}$ and $s_{2,j}$ are defined in Proposition \ref{claim.1}.
With this, we can rewrite the second term in \eqref{first Epm} (with respect to the bases $f^1_j$ and $f^2_k$) as
\begin{align}
\delta \hat Q_{\omega} =\delta T_1 \circ ( (\alpha _{j,k})_{1\le j,k\le N} + \tilde T ) \circ T_2 \label{eq:317}
\end{align}
where
\begin{align}
\tilde T = T_1^{-1 } \circ T \circ T_2^{-1}.\label{eq:765}
\end{align}
Here, we recall $T$ was defined in \eqref{eq:313} as the lower order terms in the expansion of $E_{-+}^\delta$.

Let $\hat \mu$ be the law of $(\alpha_{j,k})_{1\le j,k\le N} + \tilde T$. The following lemma, proven in \cite[Eq. 9.18]{Hager2008}, bounds the measure $\hat \mu$.

{Recall from} Assumption \ref{condition:5} that $S_1$ is a trace-class operator{, so} it can be written as a product of Hilbert--Schmidt operators $S_{1,1}$ and $S_{1,2}$ (see for instance \cite[Chapter 10]{retherford1993hilbert}).
Moreover, we can choose these Hilbert--Schmidt operators\footnote{{Indeed, by polar decomposition} $S_1 = \sum_{j}\lambda_j e_j \otimes f_j$ for $e_j$ and $f_j$ orthonormal bases of $\mathcal{H}$ and $\lambda_j$ the singular values of $S_1$, then we can define $S_{1,1} \coloneq \sum_j \sqrt{\lambda_j} e_j \otimes f_j$ and $S_{1,2} \coloneq \sum_j \sqrt{\lambda_j} f_j \otimes f_j$.} such that
\begin{align}
    \norm{S_{1,1}}_{\operatorname{HS}} = \norm{S_{1,2}}_{\operatorname{HS}} = \sqrt{\norm{S_1}_{\operatorname{Tr}}}
\end{align}
and $\norm{S_{1,1}} = \norm{S_{1,2}} = \sqrt{\norm{S_1}}$.

\begin{lemma}\label{lemma.1}
For $M > 0$, define
\begin{align}
    \mathcal{Q}_M \coloneq \set{(\alpha_{j,k} )_{j,k\in \N} : \norm{ S_{1,2} \circ\left(\sum_{j,k = 1}^{\infty} \alpha_{j,k}e_j^1 \otimes e_k^2\right) \circ S_2}_{\operatorname{HS}} < M}, \label{eq:QM}
\end{align} 
let $\mu_N$ denote the probability measure on complex $N\times N$ matrices given by:
\begin{align}
    \mu_{N} = \prod_{j,k=1}^N \left (  e^{-\abs{\alpha_{j,k}}^2 } \frac{\dd m(\alpha_{j,k})}{\pi}   \right )
\end{align}
{where $m$ is the Lebesgue measure on $\C$.}

{
Then for $(\alpha)_{j,k\in \N}$ in $\mathcal{Q}_M$, the law of $(\alpha_{j,k})_{1\le j,k\le N} + \tilde T$ (as in \eqref{eq:765}), denoted by $\hat \mu$, satisfies the following bound:
\begin{align}
\hat \mu \le \left  (1 + \mathcal{O}(1) \frac{\delta M^3}{ \sqrt{\alpha}} \right ) \mu_N. 
\end{align}
}
{Concretely, if $A \subset \mathcal{Q}_M$ is measurable, then $\hat \mu (A) \le (1 + \mathcal{O}(1 ) \delta M^3\alpha ^{-1/2}) \mu _N (A)$.}
\end{lemma}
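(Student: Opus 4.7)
The plan is to prove the bound as a Radon--Nikodym type estimate comparing $\hat\mu$ to the Gaussian reference measure $\mu_N$ through an explicit change of variables along the nonlinear map $\phi\colon \alpha_{\operatorname{top}}\mapsto \alpha_{\operatorname{top}}+\tilde T(\alpha)$, where $\alpha_{\operatorname{top}}=(\alpha_{j,k})_{1\le j,k\le N}$ and $\alpha_{\operatorname{tail}}=(\alpha_{j,k})_{(j,k)\notin[1,N]^2}$. The key observation is that on $\mathcal{Q}_M$ the perturbation $\tilde T$ is a small deterministic quantity of order $\delta M^2/\sqrt\alpha$, so after the change of variables the density of $\hat\mu$ differs from that of $\mu_N$ by the advertised multiplicative factor $1+O(\delta M^3/\sqrt\alpha)$.

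First I would establish a deterministic bound on $\tilde T$. Since $\alpha\in\mathcal{Q}_M$ forces $\|S_{1,2}\circ(\sum \alpha_{j,k}e_j^1\otimes e_k^2)\circ S_2\|_{\operatorname{HS}}<M$, and since $S_1=S_{1,1}S_{1,2}$ with $\|S_{1,1}\|=\sqrt{\|S_1\|}$, we obtain $\|Q_\omega\|_{\operatorname{HS}}\le \sqrt{\|S_1\|}\,M$. Combined with \eqref{eq:312}, this yields $\|T\|_{\operatorname{HS}}=O(\delta M^2/\sqrt\alpha)$. Using \eqref{eq:765} together with the lower bound $s_{i,j}\ge C_S$ on the singular values from Proposition \ref{claim.1} gives $\|\tilde T\|_{\operatorname{HS}}=O(\delta M^2/\sqrt\alpha)$. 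Differentiating the Neumann series \eqref{eq:313} term by term with respect to $\alpha_{\operatorname{top}}$ (with the tail fixed), and using the geometric convergence ensured by \eqref{eq:condition on Q_omega}, gives the same estimate for the Fr\'echet derivative of $\tilde T$ restricted to the top block.

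Next, for fixed $\alpha_{\operatorname{tail}}$, the map $\phi$ is a $C^1$ diffeomorphism onto its image with $\|D\phi-I\|=O(\delta M^2/\sqrt\alpha)$, and hence Jacobian $|\det D\phi|=1+O(\delta M^2/\sqrt\alpha)$. Setting $\beta=\phi(\alpha_{\operatorname{top}})$, expanding $\phi^{-1}(\beta)=\beta-\tilde T+O(\|\tilde T\|^2)$, and noting that $\|\beta\|_{\operatorname{HS}}=O(M)$ on $\mathcal{Q}_M$, one computes
\begin{align*}
\|\phi^{-1}(\beta)\|^2-\|\beta\|^2=-2\operatorname{Re}\langle\beta,\tilde T\rangle+O(\|\tilde T\|^2)=O\!\left(\delta M^3/\sqrt\alpha\right).
\end{align*}
Exponentiating and combining with the Jacobian estimate shows that the pushforward density lies within a factor $1+O(\delta M^3/\sqrt\alpha)$ of the Gaussian density on sets contained in $\mathcal{Q}_M$. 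Integrating against the marginal law of the tail then yields the stated conclusion $\hat\mu(A)\le(1+O(\delta M^3/\sqrt\alpha))\,\mu_N(A)$ for measurable $A\subset\mathcal{Q}_M$.

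The main technical obstacles I expect are (i) verifying that $\phi$ is invertible on the relevant region, which follows from $\|D\phi-I\|<1$ as a consequence of \eqref{eq:condition on Q_omega} and the norm estimates \eqref{eq:770}, and (ii) justifying the term-by-term differentiation of the Neumann series defining $T$, for which one needs uniform control on $E^0$, $E^0_\pm$ together with geometric convergence of the series in $\|Q_\omega\|/\sqrt\alpha$. Both are routine given the bounds already assembled in Proposition \ref{claim.1} and \eqref{eq:770}, but spelling them out cleanly is what occupies the bulk of the argument.
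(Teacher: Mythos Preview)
The paper does not give its own proof of this lemma; it simply cites \cite[Eq.~9.18]{Hager2008}. Your change--of--variables strategy---condition on the tail, push forward the Gaussian on $\alpha_{\operatorname{top}}$ through $\phi(\alpha_{\operatorname{top}})=\alpha_{\operatorname{top}}+\tilde T$, and compare densities---is exactly the method used there, so at the level of approach there is nothing to contrast.

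There is, however, one step in your sketch that is too quick. You write ``$\|D\phi-I\|=O(\delta M^2/\sqrt\alpha)$, and hence Jacobian $|\det D\phi|=1+O(\delta M^2/\sqrt\alpha)$.'' This inference is not valid in general: $D\phi$ acts on $\C^{N^2}$, and an operator--norm bound $\|A\|<\varepsilon$ only gives $|\det(I+A)-1|=O(N^2\varepsilon)$, which is useless unless you separately control $N$ in terms of $M$. What the argument actually needs is a bound on the \emph{trace norm} (or at least the trace) of $D\tilde T$, since $\log\det(I+A)=\operatorname{tr}\log(I+A)$ and $|\operatorname{tr}\log(I+A)|$ is governed by $\|A\|_{\operatorname{tr}}$, not $\|A\|_{\operatorname{op}}$. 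Obtaining such a trace--norm estimate requires exploiting the specific structure of $T$ in \eqref{eq:313}: each $\partial_{\alpha_{m,n}}Q_\omega=S_1(e_m^1\otimes e_n^2)S_2$ is rank one, and in the adapted bases of Proposition~\ref{claim.1} one has $T_1^{-1}E_-^0 S_1 e_m^1=f_m^1$ and $(S_2 E_+^0 T_2^{-1})^*f_n^2 = e_n^2$, which lets one sum the Hilbert--Schmidt norms of $\partial_{\alpha_{m,n}}\tilde T$ over $m,n\le N$ without picking up a raw factor of $N$. This is precisely the computation that occupies the corresponding passage in \cite{Hager2008}; your outline is correct once you replace ``operator norm of $D\tilde T$'' by ``Hilbert--Schmidt/trace norm of $D\tilde T$'' and carry out that summation. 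The density comparison via $\|\phi^{-1}(\beta)\|^2-\|\beta\|^2=O(\delta M^3/\sqrt\alpha)$ is fine as stated.
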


\noindent \textbf{Step 4. Estimate the probability $E_{-+}^\delta$ is invertible}

Observe by \eqref{first Epm}
\begin{align}
    \det(E_{-+}^\delta)  = \delta ^N \det (\delta^{-1} E_{-+}^0 + \hat Q_\omega).\label{eq.26}
\end{align}

Therefore, on $\mathcal Q_M$, the event that the right-hand side of \eqref{eq.26} is zero will have probability zero. 
This is because the entries of the random matrix have probability densities that are absolutely continuous with respect to the Lebesgue measure (using Lemma \ref{lemma.1}), and the zero \edit{set} of the characteristic polynomial has codimension $1$.

Therefore, $\Spec(D \cap A^\delta)$ is discrete as long as we can build a Grushin problem (which requires a norm bound on $Q_\omega$, \eqref{eq:condition on Q_omega}) and $(\alpha_{j,k})_{j,k\in \N}$ in the random perturbation belongs to $\mathcal Q _M$. 
Concretely,
\begin{align}
   \P( \Spec(D \cap A^\delta)  \text{ is discrete})\ge \P(\set{\norm{Q_\omega} < \delta^{-1} \alpha ^{1/2} } \cap \mathcal{Q}_M ).\label{eq329}
\end{align}
If $M  < \delta^{-1}\alpha^{1/2}\norm{S_{1,1}}^{-1}$, then
\begin{align}
    \norm{Q_\omega}_{\operatorname{HS}}\le \norm{S_{1,1}} \norm{S_{1,2} \circ \left( \sum_{\alpha _{j,k}} e_{j}^1 \otimes e_k^2 \right)  \circ S_2}_{\operatorname{HS}}< \delta ^{-1}\alpha^{1/2}\label{eqe:330}
\end{align}
so that $ \mathcal{Q}_M\subset \set{\norm{Q_\omega} < \delta^{-1} \alpha ^{1/2} }$.
Therefore \eqref{eq329} becomes
\begin{align}
    \P( \Spec(D \cap A^\delta)  \text{ is discrete})&\ge \P(\mathcal{Q}_M)\\
    &> 1 - C_0 \exp\left( \frac{C_0 \norm{S_{1,2}}_{\operatorname{HS}}^2\norm{S_2}_{\operatorname{HS}}^2 - M^2}{2\norm{S_{1,2}} \norm{S_2}}\right).\label{eq:3.22}
\end{align}
{Here the second inequality follows from Lemma \ref{lemma.norm of qomega}.}
We can select $\e \in (0,1)$ and set $M= \e\delta^{-1} \alpha^{1/2} \norm{S_{1,1}}^{-1}$. 
In this case, we have:
\begin{align}
     \frac{C_0 \norm{S_{1,2}}_{\operatorname{HS}}^2\norm{S_2}_{\operatorname{HS}}^2 - M^2}{2\norm{S_{1,2}} \norm{S_2}}&=  \frac{C_0\norm{S_{1,1}} ^2\norm{S_{1,2}}_{\operatorname{HS}}^2\norm{S_2}_{\operatorname{HS}}^2 - \e^2 \alpha \delta^{-2} }{2\norm{S_{1,1}}^2 \norm{S_{1,2}} \norm{S_2}}\\
     &= \frac{C_0\norm{S_{1}} \norm{S_{1,2}}_{\operatorname{HS}}^2\norm{S_2}_{\operatorname{HS}}^2 - \e^2 \alpha \delta^{-2}}{2\norm{S_1}^{3/2} \norm{S_2}}\\
      &= \frac{C_0\norm{S_1}\norm{S_1}_{\operatorname{Tr}}\norm{S_2}_{\operatorname{HS}}^2 - \e^2 \alpha \delta^{-2}}{2\norm{S_1}^{3/2} \norm{S_2}}
\end{align}
where in the second equality, we use that $\norm{S_{1,1}} =\norm{S_{1,2}} = \norm{S_1}^{1/2}$, and in the third equality, we use that $\norm{S_{1,2}}_{\operatorname{HS}}^2 = \norm{S_1}_{\operatorname{Tr}}$.
We can then take the limit as $\e \to 1$, so that \eqref{eq:3.22} becomes
\begin{align}
    \P( \Spec(D \cap A^\delta)  \text{ is discrete})&\ge 1 - C_0 \exp\left (\frac{C_0\norm{S_1}\norm{S_1}_{\operatorname{Tr}}\norm{S_2}_{\operatorname{HS}}^2 - \alpha \delta^{-2}}{2\norm{S_1}^{3/2} \norm{S_2}}\right) .
\end{align}

\noindent \textbf{Step 5. Consider the case where $N(\alpha) = 0$ and extend the result to all $D_j$.}

{
We recall, when building the Grushin problem, we assumed that $N  >0$ (where $N$ was defined in \eqref{eq:694} as the number of small singular values depending on $\alpha$).
If $N = 0$, then this implies that $0$ is not a singular value, so $A- z_0$ is invertible.
In this case, we can build the Grushin problem by setting $\mathcal{P} = A-z_0$ and $\mathcal E = \sum_1^\infty t_i^{-1} e_i \otimes f_i$.
This allows us to build the inverse for $A+\delta Q_\omega - z_0$ as $\mathcal{E}(1 + \delta Q_\omega \mathcal E)^{-1}$ with a Neumann series, provided that $\delta \alpha^{-1/2} \norm{Q}_\omega < 1$.
We then get $\P( \Spec(D \cap A^\delta)  \text{ is discrete})\ge \P(\set{\norm{Q_\omega} < \delta^{-1} \alpha ^{1/2} } )$, which is bounded below by \eqref{eq:3.22}, and we get the same result.
}

For each $j\in \mathcal{J}$, let $\mathcal{E}_j$ be the event $\Spec(A^\delta ) \cap D_j$ has discrete spectrum. 
But by the above discussion, if $M < \delta ^{-1} \alpha^{1/2} \norm{S_{1,1}}^{-1}$, then $\mathcal{E}_j \supset \mathcal Q_M$.
Therefore:
\begin{align}
    \P \left(\bigcap_{j \in \mathcal{ J}} \mathcal E_j\right) \ge \P ( \mathcal Q_M)
\end{align}
which is estimated in \eqref{eq:3.22}.\end{proof}

We now prove Theorem \ref{theorem:quantitative}, the more quantitative version of Theorem \ref{thm:general result}, by estimating the size of the smallest singular value.

\begin{proof}[Proof of Theorem \ref{theorem:quantitative}]
\

\noindent \textbf{Step 1. Relate singular value to $E_{-+}^\delta$}

We begin by relating the smallest singular value of $A^\delta - z_0$ to the absolute value of the determinant of $ E_{-+}^\delta$ (as long as a Grushin problem can be constructed).
We define $0\le t_{1,\delta} \le t_{2,\delta}\le \cdots$ as the increasing sequence of singular values of $A^\delta - z_0$.
We can write
\begin{align}
    \P(t_{1,\delta }\le a ) \le \P( \set{t_{1,\delta} \le a } \cap \mathcal{Q}_M) + \P(\set{t_{1,\delta} \le a } \cap \mathcal{Q}_M^c)\label{eq:793}
\end{align}
recalling the definition of $\mathcal{Q}_M$ given in \eqref{eq:QM}.
The second term of the right-hand side of \eqref{eq:793} is bounded using Lemma \ref{lemma.norm of qomega}:
\begin{align}
   \P(\set{t_{1,\delta} \le a } \cap \mathcal{Q}_M^c)\le  \P(\mathcal{Q}_M^c) \le  C_0 \exp\left( \frac{C_0 \norm{S_{1,2}}_{\operatorname{HS}}^2\norm{S_2}_{\operatorname{HS}}^2 - M^2}{2\norm{S_{1,2}} \norm{S_2}}\right).\label{eq:819}
\end{align}

As long as $M < \delta ^{-1}\alpha ^{1/2} \norm{S_{1,1}}^{-1}$, we can set up the same Grushin problem as in the proof of Theorem \ref{thm:general result} (recalling \eqref{eq:condition on Q_omega}).
We now use a general fact about Grushin problems.
Applying the Schur complement formula and properties of singular values (see for instance \cite[Lemma 18]{Vogel2020}), we get that
\begin{align}
\frac{t_1(E_{-+}^\delta )}{t_1(E_{-+}^\delta )\norm{E^\delta} + \norm{E_-^\delta}\norm{E_+^\delta}} \le     t_{1,\delta}\label{eq:826}
\end{align}
where $0\le t_1(E_{-+}^\delta) \le t_2(E_{-+}^\delta ) \le \cdots \le t_{N}(E_{-+}^\delta)$ are the singular values of $E_{-+}^\delta$.
By expanding the Neumann series expansion for the Grushin problem (similarly to \eqref{eq:607}), we see that, in the event $\mathcal{Q}_M$, we have the following bounds:
\begin{align}
t_1(E_{-+}^\delta) =  \mathcal O (1),  && \norm{E^\delta}   = \mathcal O (\alpha ^{-1/2}),\\
\norm{E_-^\delta} = \mathcal O (1), && \norm{E_+^\delta} = \mathcal O (1),
\end{align}
which combined with \eqref{eq:826}, gives us
\begin{align}
\alpha ^{1/2} t_1 (E_{-+}^\delta ) \le  C  t_{1,\delta}\label{eq:834}
\end{align}
for some constant $ C >0 $ where we use that $\alpha < \alpha_{\rm{max}}$.

Next, observe that
\begin{align}
    \abs{\det E_{-+}^\delta } = \prod_{j=1}^N t_j (E_{-+}^\delta )\le t_1(E_{-+}^\delta) (t_N(E_{-+}^\delta ))^{N-1}  = t_1(E_{-+}^\delta) \norm{E_{-+}^\delta}^{N-1}.\label{eq:840}
\end{align}
Within the event $\mathcal{Q}_M$, with $M < \delta ^{-1} \alpha^{1/2}\norm{S_{1,1}}^{-1}$, we can use \eqref{eq:607} to get that $\norm{E_{-+}^\delta} = \mathcal O (\alpha^{1/2})$.
Using this, \eqref{eq:840} can be rearranged as
\begin{align}
    t_1(E_{-+}^\delta ) \ge  \frac{\abs{\det (E_{-+}^\delta )}}{(C \sqrt{\alpha})^{N-1}}\label{eq:845}
\end{align}
for some (possibly new) constant $C>0$.
Therefore, combining \eqref{eq:845} with \eqref{eq:834}, we get that
\begin{align}
    \set{(t_{1,\delta} \le a)  \cap \mathcal{Q}_M }\subset \set{\left(\abs{\det(E_{-+}^\delta)} < a C^{N-1} \sqrt{\alpha}^{N-2}\right) \cap \mathcal{Q}_M} \label{eq:981}
\end{align}
for some (possibly new) constant $C> 0$.

\noindent \textbf{Step 2. Estimate the probability that $|\det(E_{-+}^\delta)|$ is small}

Recall, by \eqref{eq:317}
\begin{align}
E_{-+}^\delta = E_{-+} + \delta T_1 \circ (  (\alpha_{j,k})_{1\le j,k\le N} + \tilde T ) \circ T_2
\end{align}
where the definition of $T$ is given in \eqref{eq:313}, $T_{1,2}$ are defined in the discussion preceding \eqref{eq:317}, and $\tilde T \coloneq T_1^{-1} \circ T \circ T_2^{-1}$.
This gives us
\begin{align}
&\set{|\det E_{-+}^\delta | >a C^{N-1} \sqrt{\alpha}^{N-2}} \\
&\qquad =  \set{ |\det  \left ( (\alpha_{j,k})_{1\le j,k\le N} + T + \delta ^{-1 }T_1^{-1} E_{-+}^0T_2^{-1} \right ) | >a C^{N-1} \sqrt{\alpha}^{N-2}\delta ^{-N}\det (T_1 T_2)^{-1}}.
\end{align}
We also recall that Lemma \ref{lemma.1} estimates the law of $(\alpha_{j,k})_{1\le j,k\le N} + T$ by the law of an $N\times N$ Gaussian ensemble.

We next use the following Lemma from \cite[Proposition 7.3]{Hager2008} estimating the size of the determinant of a Gaussian ensemble added to a deterministic matrix.

\begin{lemma}\label{lemma.2'}
If $V_\omega = (\alpha _{i,j} )_{1\le i,j\le N}$ with $\alpha_{i,j} \sim \mathcal{N}_\C(0,1)$ i.i.d., and $D\in \C^{N\times N}$, then there exist $C_1, C_2 > 0$ (independent of $D$) such that for $c>0$:
\begin{align}
\P(  \abs{\det (D+V_\omega) }\le c ) \edit{\; \le \;}  C_1 c \exp \left (- \frac{1}{2} (C_2 + (N-\frac{1}{2} )\log N - 2N )   \right ) \label{eq.3'}
\end{align}
as long as
\begin{align}
c \le \exp\left(\frac{C_2 + (N + 1/2)\log(N) - 2N}{2}\right)\label{eq:889}
\end{align}
for $C_2$ the same constant as in \eqref{eq.3'}.
\end{lemma}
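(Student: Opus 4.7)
\textbf{Proof plan for Lemma \ref{lemma.2'}.} The plan is to follow the strategy of \cite[Proposition 7.3]{Hager2008}, building an explicit favorable event on which $|\det(D+V_\omega)|\le c$ and lower bounding its probability. The key point is that the dependence on $D$ drops out after a unitary reduction, so the resulting estimate is $D$-uniform.

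First, I would reduce to the case of diagonal $D$. The law of the complex Ginibre matrix $V_\omega$ is invariant under $V_\omega \mapsto U V_\omega W^*$ for any unitaries $U, W\in U(N)$. Writing the singular value decomposition $D = U_1 \Sigma U_2^*$ with $\Sigma = \diag(\sigma_1,\dots,\sigma_N)$, $\sigma_i\ge 0$, we have
\[
|\det(D+V_\omega)| = |\det U_1|\,|\det U_2^*|\,|\det(\Sigma + U_1^* V_\omega U_2)| \stackrel{d}{=} |\det(\Sigma+V_\omega)|,
\]
so WLOG $D=\Sigma$. This step is what makes the eventual constants independent of $D$.

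Second, I would analyze $p(V) := \det(\Sigma + V)$ as a polynomial of degree $N$ in the $N^2$ Gaussian variables $V_{ij}$, and condition on the off-diagonal entries. Given the off-diagonal entries, $p(V_\omega)$ is a polynomial of degree $N$ in the $N$ independent standard complex Gaussians $V_{11},\dots,V_{NN}$, whose leading term is $\prod_i(\sigma_i + V_{ii})$. The distribution of a degree-$N$ polynomial in $N$ independent complex Gaussians admits an anti-concentration lower bound of the form $\P(|p|\le c)\ge C c$ provided $c$ is below a threshold controlled by the leading coefficient; iterating this one-variable-at-a-time estimate through the diagonal entries is the content of \cite[Proposition 7.3]{Hager2008}.

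Third, I would track the constants. The factor $C_1 c$ on the right-hand side of \eqref{eq.3'} comes from the one-dimensional complex Gaussian anti-concentration in the final variable, while the $N$-dependent exponential factor $\exp(-\tfrac12(C_2+(N-\tfrac12)\log N - 2N))$ emerges from iteratively bounding the Gaussian densities over the remaining diagonal variables and invoking Stirling's formula $\log N!=(N+\tfrac12)\log N - N + O(1)$. The admissibility condition \eqref{eq:889} on $c$ is precisely the range in which the final lower bound stays below the trivial bound $\P(\cdot)\le 1$.

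The main obstacle is the $D$-uniformity of the constants: naively, for large $\|D\|_{HS}$ the Gaussian weight $e^{-\|W-D\|_{HS}^2}$ on the region $\{|\det W|\le c\}$ can be very small. The point of the unitary reduction to diagonal $D$ combined with the per-variable conditioning is that the large values of $\sigma_i$ get absorbed into the favorable event (by centering each Gaussian step near $-\sigma_i$ rather than near $0$), so that the surviving probability depends only on $N$ and $c$. Verifying that this absorption can be done uniformly in $\sigma_1,\dots,\sigma_N$ is the technical heart of the argument, and it is carried out in \cite{Hager2008} by an inductive Schur-complement expansion $\det(\Sigma+V)=\det A\cdot(a - b^* A^{-1}c)$ applied at each level.
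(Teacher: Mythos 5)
The paper does not actually prove Lemma \ref{lemma.2'}; it imports it verbatim from \cite[Proposition 7.3]{Hager2008}, so there is no internal proof in the paper to compare yours against. The more important issue is that the inequality in \eqref{eq.3'} as printed contains a sign error: it must read $\le$, not $\ge$. Two pieces of evidence. First, the lemma is invoked exactly once, in \eqref{eq228} in the proof of Theorem \ref{theorem:quantitative}, where it is used to bound $\P(|\det(V_\omega+D)|\le c)$ from \emph{above}. Second, a $D$-independent \emph{lower} bound is simply false: take $D = R\,I$ with $R\to\infty$; then $|\det(D+V_\omega)|$ is typically of size $R^N$, so $\P(|\det(D+V_\omega)|\le c)\to 0$ for any fixed admissible $c$, contradicting any positive $D$-independent lower bound.

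Because you took the printed $\ge$ at face value, your whole proposal points in the wrong direction, and it would not have closed even in that direction. You identify the obstruction yourself in your final paragraph, but the fix you sketch --- ``centering each Gaussian step near $-\sigma_i$'' --- does not remove the $D$-dependence: the complex Gaussian density at $-\sigma_i$ is $e^{-\sigma_i^2}/\pi$, so the probability of the favorable event you construct carries a factor $\prod_i e^{-\sigma_i^2}$ that vanishes as $\|D\|\to\infty$. That is not a technicality to absorb; it is exactly why the lower bound is false. (The subsidiary claim that for a single complex Gaussian $\P(|Z|\le c)\ge Cc$ is also not right: $\P(|Z|\le c)=1-e^{-c^2}\asymp c^2$.) For the correct $\le$ direction, the Hager--Sj\"ostrand argument starts with the same unitary/SVD reduction you propose, but then the decisive step goes the other way: expand the determinant as an affine form $\sum_i a_i V_{iN}+b$ in the Gaussians of a single column, with $a_i,b$ measurable in the remaining entries, and use the shift-uniform anti-concentration bound $\P(|\sum_i a_i Z_i + b|\le c)\le (c/\|a\|)^2$. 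Uniformity in the shift $b$ is precisely what makes the $D$-dependence disappear in the upper bound, where it cannot for the lower bound; the $N$-dependent exponential in \eqref{eq.3'} then comes from controlling $\|a\|$ (essentially via moments of Ginibre minors and Stirling's formula) across the iteration.
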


We now apply Lemmas \ref{lemma.1} and \ref{lemma.2'} (setting $D = \delta ^{-1} T_1^{-1} E_{-+}^0T_2^{-1}$ and assuming \eqref{eq:889} holds for now) to get that
\begin{align}
&\P ( \set{ \abs{\det E _{-+}^\delta }\le  a C^{N-1} \sqrt{\alpha}^{N-2}} \cap \mathcal{Q}_M) \\
&\qquad \le \left ( 1 + \mathcal{O}(1)  \frac{\delta M^3}{\sqrt{\alpha}} \right )  \P \left (  | \det  (V_\omega  +D) | \le a C^{N-1} \sqrt{\alpha}^{N-2}\delta^{-N} \det (T_1 T_2)^{-1} \right ) \\
&\qquad \le  \left ( 1 + \mathcal{O}(1)  \frac{\delta M^3}{\sqrt{\alpha}} \right )  \left(a C^{N-1} \sqrt{\alpha}^{N-2}\delta ^{-N}\det(T_1 T_2 )^{-1} \right) \\
& \qquad \qquad \qquad \qquad \qquad \cdot\exp \left  ( -\frac{1}{2} (C_2 + ( N - \frac{1}{2} ) \log (N) - 2N ) \right ).\label{eq228}
\end{align}
Letting $M < \delta ^{-1} \alpha ^{1/2} \norm{S_{1,1}}^{-1}$, we get that
\begin{align}
    \frac{\delta M^3}{\sqrt{\alpha}} < \frac{\alpha^{2} \delta ^2}{\norm{S_{1,1} }^3} = \frac{\alpha^{2} \delta ^2}{\norm{S_{1} }^{3/2}}
\end{align}
where we recall that $\norm{S_{1,1}} = \norm{S_1}^{1/2}$.
By Proposition \ref{claim.1}, the determinants of $T_1$ and $T_2$ are bounded below, so that
\begin{align}
    \det(T_1 T_2 )^{-1} \le C_S^{-2N}.\label{eq:906}
\end{align}
There exist positive constants $C_3$ and $C_4$ such that
\begin{align}
    \frac{-1}{2} ((N-1/2) \log (N) - 2N) \le -C _3 N \log (N) + C_4. \label{eq:910}
\end{align}

\noindent \textbf{Step 3. Estimate the probability the smallest singular value is small}

We therefore get (by \eqref{eq:981}):
\begin{align}
\P(\set{t_{1,\delta} \le a} \cap \mathcal{Q}_M)\le C \frac{\alpha^{2+(N-1)/2} }{\delta^{N-2}C_S^{2N}\norm{S_{1}}^{3/2}}N^{-C_3N}a. \label{eq:914}
\end{align}

For \eqref{eq:889} to hold, we require that
\begin{align}
    aC^{N-1} \sqrt{\alpha}^{N-2}\delta^{-N}\det(T_1T_2)^{-1} \le \exp \left ( \frac{C_2 + (N+1/2) \log (N) - 2N}{2} \right). \label{eq:916}
\end{align}
Using \eqref{eq:906} and a similar bound as in \eqref{eq:910}, \eqref{eq:916} holds if
\begin{align}
    a\le \exp\left( C_5 N \log (N)  + N \log(\delta) -
\left(\frac{N-2}{2}\right) \log(\alpha) \right)
\end{align}
for some $C_5 >0$. 
Combining \eqref{eq:914} with \eqref{eq:819} (setting $M = \e \delta ^{-1} \sqrt \alpha \norm{S_{1}}^{-1/2}$) and taking the limit as $\e \to 1$ gives us the Theorem.
\end{proof}

\section{Applications to twisted bilayer graphene}
\label{sec:TBG}
In this section we build Hilbert--Schmidt operators to satisfy Hypothesis \ref{def1} for the operator
\begin{align}
D_h(\beta) =  \mat{2hD_{\bar z } & U(z) \\ U(-z) & 2h D_{\bar z}} \label{eq:Dh defined}
\end{align}
first defined in \eqref{eq:first define of Dh} recalling that $2\dbar _z \coloneq (\p_{\Re z} + i \p _{\Im z })$. 
Here we absorb $\beta$ into the definition of $U$, so that $U(z) \coloneqq  \beta \sum_{k=0}^2 \omega^k \exp((z\bar \omega^k - \bar z \omega^k)  /2 )$.

{
By \cite[Proposition 2.3]{Becker2020}, $D_h(\beta)$ is an unbounded Fredholm operator on the domain $H^1(\C / \Gamma; \C^2)$ with $\rho_F^{(0)}(D_h(\beta)) = \C$ (recall $\rho_F^{(0)}$ is defined in equation \eqref{eq:rhoFA0}).

We will ultimately apply Theorem \ref{thm:general result} to $D_h(\beta)$ which requires showing that the spectrum of $D_h(\beta)^* D_h(\beta)$ on its natural domain is discrete.
This follows by the Sobolev embedding.
In fact, $D_h (\beta)^* D_h (\beta)$ is a self-adjoint operator on $H^2 (\C / \Gamma ; \C^2)$, so that $(D_h (\beta)^* D_h (\beta) - i)^{-1}$ is a bounded operator from $L^2 (\C / \Gamma)$ to $H^2 (\C /\Gamma)$.
According to the Rellich--Kondrachov Theorem, $H^2 (\C /\Gamma)$ is compactly embedded in $L^2 ( \C / \Gamma)$ so that $$(D_h (\beta)^* D_h (\beta) - i)^{-1} \colon L^2 (\C / \Gamma) \to L^2 (\C /\Gamma)$$ is a compact operator. 
Therefore $(D_h (\beta)^* D_h (\beta) - i)^{-1}$ has discrete spectrum, which implies that 
$D_h (\beta)^* D_h (\beta)$ has discrete spectrum.

We want to construct operators $S_1$ and $S_2$ that satisfy Hypothesis \ref{def1} for the random perturbation $Q_\omega$ defined in \eqref{random_pertubation}.
}

\begin{claim}\label{claim:existenceofrandomtbg}
There exist constants $C_1,C_2 > 0$ such that if $\chi(z,\zeta) \in C_0^\infty (T^*(\C /\Gamma) ; [0,1])$ is identically $1$ when $|\zeta|^2 < C_1 C_2$, then
\begin{align}
S_1 = S_2 = \Op_h\mat{\chi & 0 \\ 0 & \chi} \label{eq:S1 S2 defined}
\end{align}
satisfy Hypothesis \ref{def1} for $\alpha < C_2$, where $\Op_h$ is the quantization of functions on $T^*(\C/\Gamma)$ defined in Definition \ref{def:quantization_thing}.
\end{claim}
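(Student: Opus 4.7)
The plan is to use semiclassical microlocalization: the eigenvectors of $(D_h(\beta)-\tilde z)^*(D_h(\beta)-\tilde z)$ and of its companion $(D_h(\beta)-\tilde z)(D_h(\beta)-\tilde z)^*$ with eigenvalues less than $\alpha$ are concentrated in a bounded region of phase space, so the cutoff $\Op_h(\chi)$, being identically $1$ on a large ball in the $\zeta$ variable, acts essentially as the identity on them. Concretely, the Weyl principal symbol of $D_h(\beta)$ on $T^*(\C/\Gamma)$ is the matrix
\begin{align}
p(z,\zeta) = \begin{pmatrix} 2\bar\zeta & U(z) \\ U(-z) & 2\bar\zeta \end{pmatrix},
\end{align}
and since $U \in C^\infty(\C/\Gamma)$ is bounded, one has the matrix lower bound $(p(z,\zeta) - \tilde z)^*(p(z,\zeta) - \tilde z) \geq (4|\zeta|^2 - C_0) I_2$ for some constant $C_0$. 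Hence constants $C_1,C_2 > 0$ can be chosen so that $(p-\tilde z)^*(p-\tilde z) \geq 4 C_2 I_2$ on $\{|\zeta|^2 \geq C_1 C_2\}$, encoding the semiclassical ellipticity of $D_h(\beta)$ at infinity.

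Next, I would set $L := (D_h(\beta)-\tilde z)^*(D_h(\beta)-\tilde z)$, a non-negative self-adjoint matrix-valued semiclassical operator with principal symbol $(p-\tilde z)^*(p-\tilde z)$, and fix $\alpha < C_2$. Picking $\psi \in C_c^\infty(\R;[0,1])$ with $\psi \equiv 1$ on $[-1,\alpha]$ and $\supp \psi \subset [-2, 2C_2]$, the semiclassical Helffer--Sj\"ostrand functional calculus (applied to self-adjoint matrix-valued operators) gives
\begin{align}
\psi(L) = \Op_h\bigl(\psi\bigl((p-\tilde z)^*(p-\tilde z)\bigr)\bigr) + O_{L^2 \to L^2}(h).
\end{align}
By the choice of $C_1,C_2$, the symbol on the right vanishes wherever $(p-\tilde z)^*(p-\tilde z) > 2C_2$, in particular on $\{|\zeta|^2 \geq C_1 C_2\}$. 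Since $\chi \equiv 1$ on $\{|\zeta|^2 < C_1 C_2\}$, the pseudodifferential composition rule then yields
\begin{align}
\Op_h(\chi)\,\psi(L) = \psi(L) + O_{L^2\to L^2}(h).
\end{align}

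Finally, for any $v \in \mathcal{H}_{2,\alpha}$ we have $\psi(L)v = v$, so
\begin{align}
\|S_2 v\| = \|\Op_h(\chi) v\| \geq \|\psi(L) v\| - \|(\Op_h(\chi)\psi(L) - \psi(L))v\| \geq (1 - C h)\|v\|,
\end{align}
which yields the required $\|S_2 v\| \geq C_S \|v\|$ with any $C_S \in (0,1)$ for $h$ sufficiently small. The identical argument with $L$ replaced by $(D_h(\beta)-\tilde z)(D_h(\beta)-\tilde z)^*$, whose symbol $(p - \tilde z)(p-\tilde z)^*$ satisfies the same ellipticity at infinity, handles $\mathcal{H}_{1,\alpha}$ and gives $\|S_1 w\| \geq C_S \|w\|$. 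The main technical obstacle is rigorously justifying the matrix-valued semiclassical calculus on the torus $\C/\Gamma$: Weyl quantization, the composition rule, and in particular the Helffer--Sj\"ostrand functional calculus for self-adjoint matrix-valued pseudodifferential operators. The quantization framework on $\C/\Gamma$ is set up in Appendix \ref{section3}, and the extensions from the scalar to the matrix-valued setting are routine.
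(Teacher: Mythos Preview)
Your argument is correct and follows the same microlocal intuition as the paper, but the implementation differs. The paper avoids the functional calculus entirely: for a fixed eigenvector $u$ with $\Theta u = t_i^2 u$ (where $\Theta = D_h(\beta)^* D_h(\beta)$), it shows directly that the principal symbol of $\Theta - t_i^2 + i\,\Op_h(\tilde\chi I_2)$ has determinant bounded below, builds a parametrix $P$ via Proposition~\ref{claim.parametrix}, and then uses the disjoint-support calculus to conclude $\Op_h((1-\chi)I_2)u = \Op_h((1-\chi)I_2)\,P\,(i\,\Op_h(\tilde\chi I_2))u + O(h^\infty) = O(h^\infty)$. This yields the sharper estimate $\|S_j u\| \ge (1 + O(h^\infty))\|u\|$ and, crucially, uses only the composition and parametrix results already established in Appendix~\ref{section3}. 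Your Helffer--Sj\"ostrand route is cleaner in that it handles all eigenvectors simultaneously via $\psi(L)$, but it imports a heavier piece of machinery (matrix-valued semiclassical functional calculus on the torus) not developed in the paper, and as written gives only $O(h)$ rather than $O(h^\infty)$ --- which is enough for the claim, but less than what the parametrix argument delivers for free. A minor slip: the principal symbol of $2hD_{\bar z}$ in the paper's convention is $\zeta$, not $2\bar\zeta$; this does not affect your ellipticity estimate since only $|\zeta|$ enters.
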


\begin{proof}
{
To show that $S_1$ and $S_2$ satisfy Hypothesis \ref{def1} for an $\alpha$, we must show there exists a $C_S  > 0$ such that  $\norm{S_1 u} , \norm{S_2 u} \ge \norm{u}$ for every $u\in H^2 (\C /\Gamma ; \C^2)$ satisfying $D_h(\beta)^*D_h(\beta) u = t_i^2 u $ for $t_i^2 < \alpha$. 
Let us now fix such a $t_i$ and $u$.

To establish the lower bound, we will use the symbolic calculus of matrix-valued symbols (which is reviewed in the Appendix).
}

{To apply this calculus, we use the following notation.} We write elements of $T^*(\C / \Gamma)$ as $(x + i y,\xi + i \eta)$ where $\xi,\eta,x,y\in \R$ are such that $x+iy \in \C / \Gamma$.
We will also write $\zeta = \xi + i \eta$ and $z = x + i y$.

The operator $D_h(\beta)$ (defined in \eqref{eq:Dh defined}) has principal symbol
\begin{align}
\mat{ \zeta & U(z) \\ U(-z) & \zeta }.
\end{align}

Define $\SAAA \coloneqq (D_h(\beta))^* (D_h(\beta))$ {with domain $H^2 (\C /\Gamma ; \C^2)$}. 
{From this, we see that $\SAAA  u = t_i^2 u$} and
\begin{align}
q_0 \coloneqq \sigma_0 (\SAAA)=  \mat{|\zeta|^2 + |U(-z)|^2 & \bar \zeta U(z) + \zeta \overline{U(-z)}  \\ \overline{U(z)} \zeta + \bar \zeta U(-z)  & |\zeta|^2 + |U(z)|^2 }
\end{align}
where $\sigma_0(P)$ denotes the principal symbol of an operator $P$ (see Definition \ref{def:principal symbol}).    
We would like to build a $\chi \in C_0^\infty (T^*M ; [0,1])$ such that a parametrix can be constructed for $\SAAA - t_i^2 + i \Op_h(\chi I_2)$, where
\begin{align}
I_2 \coloneqq \mat{1 & 0 \\ 0 & 1}.
\end{align}
{Such a parametrix will then be used to show that $S_1 u = S_2 u= u + \mathcal{O}(h^\infty)$, which will provide the desired lower bound of $\norm{S_1 u}$ and $\norm{S_2 u}$.}
{To construct such a parametrix}, we must show that the determinant of the principal symbol of $\SAAA-t_i^2 + i \Op_h(\chi I_2)$ is uniformly bounded from below.

Let $\lambda _{\pm} = \lambda_\pm (t_i)$ denote the eigenvalues of $q_0 - t_i^2 {I_2}$. Fix $\e \in (0,1)$, {$C_2 >0$ sufficiently large (which we will specify later)}, and define
\begin{align}
    K \coloneqq \set{  (z,\zeta) \in T^* (\C / \Gamma) : |\zeta|^2 \le (1 - \e)^{-1}C_2 }, \label{eq:1066}
\end{align}
and let $\tilde \chi \in C_0^\infty (T^*(\C / \Gamma); [0,1])$ be identically $1$ on $K$ (which is a compact set) and zero on the set
\begin{align}
    \set{  (z,\zeta) \in T^* (\C / \Gamma) : |\zeta|^2 > C_1 C_2 }
\end{align}
where $C_1 = c(1 - \e)^{-1}$ and $c  > 1$.
We can then compute that
\begin{align}
    |\det (\sigma_0 ( \SAAA - t_i^2 + i \Op_h(\chi I_2))|^2 &= |\lambda_+ \lambda_- + i \chi (\lambda_+  + \lambda_- ) - \chi^2 |^2 \\
    &= (\lambda_+ \lambda_-)^2 + \chi^4 + \chi^2 (\lambda _+^2 + \lambda_-^2 ),\label{eq:1071}
\end{align}
which is bounded below by $\chi^4$ so that for $(z,\zeta) \in K$, \eqref{eq:1071} is bounded below by $1$.

We now aim to provide a lower bound of $\lambda_+ \lambda_- $ for $(z,\zeta) \in K^c$. Note that:
\begin{align}
    \lambda_+ \lambda_- = \det (q_0 - t_i^2 ) = f_1 (z,\zeta) f_2 (z,\zeta) - f_3(z,\zeta) \label{eq49}
\end{align}
where
\begin{align}
    f_1 (z,\zeta) &\coloneqq |\zeta|^2 + |U(-z)|^2 - t_i^2, \\
    f_2 (z,\zeta) & \coloneqq |\zeta|^2 + |U(z)|^2 - t_i^2, \\
    f_3 (z,\zeta) & \coloneqq |\bar \zeta U(z) + \zeta \overline{U(-z)}|^2.
\end{align}
Recalling that $|U(z)|$ is bounded, we see that $|f_3| \le  C |\zeta|^2$ where $C$ depends on the maximum of $|U(z)|$.
Furthermore, for $j=1,2$
\begin{align}
    f_j(z,\zeta) {\ge |\zeta|^2 - t_i^2\ge |\zeta|^2 -\alpha \ge |\zeta|^2 - C_2} \ge \e  |\zeta|^2 \label{eq413} 
\end{align}
using \eqref{eq:1066}.
Therefore, using \eqref{eq:1066}, \eqref{eq49} and \eqref{eq413}, we get that
\begin{align}
    \lambda_+\lambda_-  \ge \e^2 |\zeta|^4 - C |\zeta|^2 =|\zeta|^2 (\e^2 |\zeta|^2 - C) \ge  (1-\e)^{-1} C_2\left( \e^2(1-\e)C_2 -C\right)  > 0
\end{align}
as long as
\begin{align}
    C_2^2 > \frac{C}{\e^2(1-\e)}. \label{eq:1138}
\end{align}
{For a sharper lower bound on $C_2$, we may minimize the right-hand side of \eqref{eq:1138} by letting $\e = 2/3$, in which case we have $(\e^2 (1-\e))^{-1} = 27/4$.
In this case we require $C_2 > \sqrt{27 C} / 2$, and $C_1$ in the statement of the Claim is $3$.
}

{Having shown a lower bound on the determinant of the principal symbol of $\SAAA - t_i^2 + i \Op_h(\tilde \chi I_2)$}, we can invert $\SAAA - t_i^2 + i \Op_h(\tilde \chi I_2)$ using Proposition \ref{claim.parametrix}.
{Let $P$ be a parametrix} of $\SAAA - t_i^2 + i\Op_h(\tilde \chi I_2) $.
Now let $ \chi \in C_0^\infty (T^* (\C / \Gamma) ; [0,1])$ be identically $1$ on the support of $\tilde \chi$.
Recalling that $\SAAA u = t_i^2 u$, we get that
\begin{align}
\Op_h(I_2 -  \chi I_2) u &= \Op_h(I_2 -  \chi I_2) P(\SAAA - t_i^2 + i\Op_h(\tilde \chi I_2)) u + \mathcal{O}_{L^2 \to L^2} (h^\infty) \\
&= \Op_h  (I_2 -  \chi I_2) P(i \Op_h(\tilde \chi I_2) ) u + \mathcal{O} (h^\infty)= \mathcal{O}_{L^2\to L^2}(h^\infty). 
\end{align}
Here we use that the composition of symbols with disjoint support yields $\mathcal{O}_{L^2 \to L^2}(h^\infty)$ errors.
We then see that
\begin{align}
\Op_h( \chi I_2)  u = u + \mathcal{O} (h^\infty)
\end{align}
so we get that:
\begin{align}
\norm{  \Op_h( \chi I_2) u } \ge (1 + \mathcal{O} (h^\infty)  ) \norm{u}
\end{align}
so that $\Op_h( \chi I_2)$ satisfies Hypothesis \ref{def1}.
\end{proof}

\begin{theorem}[Perturbation of $D_h(\beta)$]\label{theorem_pert_of_DhB}
Suppose $D_h(\beta)$ is defined by \eqref{eq:first define of Dh}, $S_1$ and $S_2$ are defined by \eqref{eq:S1 S2 defined}, $0 < \delta < h^{\kappa}$ with $\kappa >  2$, and $Q_\omega$ is defined by \eqref{random_pertubation}. 
Then $D_h(\beta) + \delta Q_\omega$ has discrete spectrum with probability at least
\begin{align}
1 - C_1 \exp(-C_2/h^{2\kappa})
\end{align}
for positive constants $C_1$ and $C_2$ (i.e. with overwhelming probability).
\end{theorem}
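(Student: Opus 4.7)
The plan is to apply Theorem \ref{thm:general result} to $A = D_h(\beta)$ with the pair of operators $S_1 = S_2 = \Op_h(\chi I_2)$ supplied by Claim \ref{claim:existenceofrandomtbg}, and then to track how each of the norms in the exponent \eqref{eq:prob bound} scales in the semiclassical parameter $h$.

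First I would verify the hypotheses of Theorem \ref{thm:general result}. By \cite{Becker2020}, $D_h(\beta)$ is a closed, densely defined, unbounded Fredholm operator of index zero with $\rho_F^{(0)}(D_h(\beta)) = \C$, so there is a single connected component and one may take $z_0 = 0 \in \C$. Discreteness of $\Spec(D_h(\beta)^* D_h(\beta))$ was established at the start of Section \ref{sec:TBG} via Rellich--Kondrachov, so condition (3) holds. Fixing any $\alpha \in (0, C_2)$ with $C_2$ as in Claim \ref{claim:existenceofrandomtbg}, that claim yields $(S_1, S_2) \in \mathfrak{S}(0, \alpha)$ with some constant $C_S > 0$. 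Since $\chi \in C_0^\infty(T^*(\C/\Gamma))$ has compact phase-space support, the matrix-valued calculus of Appendix \ref{section3} ensures $S_1$ is trace class, giving condition (5).

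Next I would estimate the four norms entering \eqref{eq:prob bound} via the semiclassical calculus on the torus. Boundedness of Weyl quantizations of bounded symbols gives $\|S_1\| = \|S_2\| = O(1)$ uniformly in $h$. The semiclassical Weyl asymptotics applied to the trace-class operator $\Op_h(\chi I_2)$ on the four-dimensional phase space $T^*(\C/\Gamma)$ yields
\[
\|S_1\|_{\operatorname{Tr}} = O(h^{-2}), \qquad \|S_2\|_{\operatorname{HS}}^2 = \operatorname{Tr}(S_2^* S_2) = \operatorname{Tr}\bigl(\Op_h(\chi^2 I_2)\bigr) + O(h^{-1}) = O(h^{-2}).
\]
Substituting these bounds into \eqref{eq:prob bound} with $\alpha$ fixed yields, for the exponent,
\[
\frac{C_0 \|S_1\|\|S_1\|_{\operatorname{Tr}}\|S_2\|_{\operatorname{HS}}^2 - \alpha\delta^{-2}}{2\|S_1\|^{3/2}\|S_2\|} = \frac{O(h^{-4}) - \alpha \delta^{-2}}{O(1)}.
\]
Under the hypothesis $\delta < h^{\kappa}$ with $\kappa > 2$, one has $\alpha \delta^{-2} \ge \alpha h^{-2\kappa}$, which dominates the $O(h^{-4})$ term as $h \to 0$ precisely because $2\kappa > 4$. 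Thus the exponent is bounded above by $-c\, h^{-2\kappa}$ for some $c > 0$ and all sufficiently small $h$, giving a probability bound of the form $1 - C_1 \exp(-C_2/h^{2\kappa})$ as claimed.

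The main obstacle is the verification of the semiclassical Weyl-type estimates for $\|S_1\|_{\operatorname{Tr}}$ and $\|S_2\|_{\operatorname{HS}}$ on the compact quotient $\C/\Gamma$, which relies on the matrix-valued Weyl quantization developed in Appendix \ref{section3}; the remaining arithmetic is routine. The constraint $\kappa > 2$ is seen to be exactly the threshold at which the variance gain $\alpha\delta^{-2}$ of the noise overtakes the deterministic Hilbert--Schmidt/trace volume growth $h^{-4}$ inherent to the four-dimensional cotangent bundle.
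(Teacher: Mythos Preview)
Your proposal is correct and follows essentially the same approach as the paper: verify the hypotheses of Theorem \ref{thm:general result} via Claim \ref{claim:existenceofrandomtbg} and the discussion at the start of Section \ref{sec:TBG}, then plug the semiclassical norm estimates $\|S_1\|,\|S_2\|\sim 1$ and $\|S_1\|_{\operatorname{Tr}},\|S_2\|_{\operatorname{HS}}^2=O(h^{-2})$ into \eqref{eq:prob bound}. Your write-up is in fact more explicit than the paper's in justifying the norm bounds and in isolating why $\kappa>2$ is the correct threshold.
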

\begin{proof}

In Claim \ref{claim:existenceofrandomtbg}, we constructed $S_1$ and $S_2$ satisfying the hypotheses of Theorem \ref{thm:general result}.
Moreover, because the symbols of $S_1$ and $S_2$ have compact support, they are both trace-class.

We therefore can apply Theorem \ref{thm:general result} to get that there exists a $C_0  > 0 $ such that $D(h) + \delta Q_\omega$ has discrete spectrum with probability at least:
\begin{align}
    \max\left(1 - C_0 \exp \left ( \frac{C_0 \norm{S_1} \norm{S_1}_{\operatorname{Tr}} \norm{S_2}_{\operatorname{HS}}^2 - \alpha  \delta ^{-2}}{ 2 \norm{S_1}^{3/2} \norm{S_2}} \right ) , 0 \right).
\end{align}
By construction, $\norm{S_1}_{\operatorname{Tr}} , \norm{S_2}^2_{\operatorname{HS}} = \mathcal{O} (h^{-2}) $, and $\norm{S_1}, \norm{S_2} \sim 1$. 
Therefore, if $0 < \delta < h^{\kappa}$ with $\kappa >  2$, we get that $D(h) + \delta Q_\omega$ has discrete spectrum with probability at least
\begin{align}
1 - C_1 \exp\left( -C_2/h^{2\kappa}\right) 
\end{align}
for positive constants $C_1$ and $C_2$.
\end{proof}

\appendix

\section{Quantization of matrix valued functions on \texorpdfstring{$\C / \Gamma$}{C/Γ}}\label{section3}

In this appendix, we provide the background and pseudo-differential calculus to construct operators $S_1$ and $S_2$ for our application to twisted bilayer graphene and, in particular, to $D_h(\beta)$ as defined in \eqref{eq:first define of Dh}.

{
The ultimate goal is to prove a composition result for certain quantized operators (Proposition \ref{claim:composition}) and a parametrix construction (Proposition \ref{claim.parametrix}). 
We will prove this by using well-established results on the Weyl quantization of scalar-valued functions on $\R^d$.

\edit{The results of this section follow via mild modifications of standard results for pseudo-differential operators with scalar-valued symbols.  
The main difference in the matrix-valued symbol case, is that when constructing a parametrix, the inverse of the symbol must be used.
A careful verification of the usual parametrix construction (which relies on a well-defined symbol class with a calculus) is required.
}

A summary of this appendix is as follows.
\begin{enumerate}
    \item Define the class of functions ($S^k(T^*(\C / \Gamma); \C^2)$) we want to quantize (Definition \ref{def:1275}).
    \item Show that the usual $h$-Weyl quantization of these functions induces bounded linear maps between appropriate Sobolev spaces on $\C / \Gamma$ (Proposition \ref{prop:1325}).
    \item Define the quantization of $A \in S^k(T^*(\C / \Gamma); \C^2)$ as the restriction of the $h$-Weyl quantization to a Sobolev space on $\C / \Gamma$ (Definition \ref{def:quantization_thing}).
    \item Use standard results about the $h$-Weyl quantization of scalar-valued functions on $\R^d$ to get a composition and parametrix result (Propositions \ref{claim:composition} and \ref{claim.parametrix}). 
\end{enumerate}
}

We aim to quantize matrix-valued symbols on the cotangent space of $\C / \Gamma$. 
We recall that the lattice $\Gamma$ is given by $ 4\pi (i\omega \Z \oplus i \omega ^2 \Z)$ where $\omega = e^{2\pi i /3}$.
The discussion in this section will work for any lattice of $\R^d$ spanned by $d$ linearly independent vectors, however, we only consider $\Gamma \subset \mathbb C$ to keep the discussion as explicit as possible.

{We first recall the $h$-Weyl quantization of symbols on $\R^{2d}$.
For
\begin{align}\label{eq:1317}
    a (x,\xi) \in S^0(\R^{2d}) \coloneq \set{a(x,\xi) \in C^\infty (\R^{2d}) : \forall  \alpha \in \N^{2d} \  \exists \  C_\alpha >0 \  \text{s.t.} \  |\p^\alpha_{x,\xi} a(x,\xi) | \le C_\alpha },
\end{align}
we define $\Op_h^w(a)$ acting on $u\in \mathcal{S}(\R^d)$ (Schwartz) by
\begin{align}
    \Op_h^w(a) u (x) \coloneq \frac{1}{2\pi h} \iint_{\R^{2d}} e^{\frac{i}{h}(x-y)\cdot\xi}a\left(\frac{x+y}{2},\xi\right) u(y) \dd y \dd \xi \in \mathcal{S}(\R^d).
\end{align}
By duality, $\Op_h^w(a)$ extends to an operator from $\mathcal{S}'(\R^d)$ to itself.
If $a$ has certain decay properties, $\Op_h^w(a)$ can be shown to map boundedly between Sobolev spaces.

Identifying $\C$ with $\R^{2d}$, this allows us to quantize functions on $\C$.
We can also consider matrix-valued symbols by quantizing each component individually.
What requires some discussion is how to construct a quantization of symbols on $\C / \Gamma$ that map from some set of functions on $\C / \Gamma$ to some other set of functions on $\C / \Gamma$.

Let us begin by defining the space of symbols we wish to quantize.} 
These symbols will be functions on the cotangent bundle $T^*(\C /\Gamma)$. 
 The cotangent bundle $T^*(\C / \Gamma)$ can be identified with $(\C / \Gamma ) \times \R^2$.
 Elements of $T^*(\C / \Gamma)$ can be written as $(z,(\xi,\eta))$ with this identification, where $z\in \C / \Gamma$ and $(\xi,\eta) \in \R^2$.
 We also identify $\R^2$ with $\C$ by writing $\zeta = \xi + i \eta$, so that elements of $T^*(\C / \Gamma)$ can also be written $(z,\zeta)$.

\begin{defi}[$S^k(T^*(\C / \Gamma))$]
For $k\in \R$, we define $S^k(T^*(\C/\Gamma))$ as the set of functions $f$ in $C^\infty (T^*(\C/\Gamma))$ such that for all $\alpha,\beta \in \N^{2}$, there exists $C_{\alpha ,\beta} > 0 $ such that:
\begin{align}
|\p^\alpha_{z,\bar z }  \p^\beta_{\zeta,\bar \zeta} f(z,\zeta)| \le C_{\alpha,\beta} (1 + |\zeta|)^{k - |\beta|}.
\end{align}
\end{defi}

The symbols we are interested in quantizing to build $D_h (\beta)$ are defined as follows.

\begin{defi}[$S^k(T^*(\C / \Gamma); \C^{2\times 2})$]\label{def:1275}
For $k\in \R$, we say a matrix-valued function $A\in C^\infty ( T^*(\C / \Gamma) ; \C^{2 \times 2})$ is in $S^k(T^*(\C / \Gamma); \C^{2\times 2})$ if
\begin{align}
A(z,\zeta) = \mat{A_{11}(z,\zeta) & A_{12}(z,\zeta) \\ A_{21}(z,\zeta) & A_{22}(z,\zeta)} \label{eq:466}
\end{align}
and $A_{ij} \in S^k(T^*(\C/ \Gamma))$ for each $i,j$.
\end{defi}

We define the Weyl quantization of matrix-valued symbols by quantizing element-wise
\begin{align}
\Op_h^w(A) \coloneqq \mat{\Op_h^w(A_{11}) & \Op_h^w(A_{12}) \\ \Op_h^w(A_{21}) & \Op_h^w(A_{22})}
\end{align}
where $\Op_h^w (A_{ij})$ is defined as
\begin{align}
\Op_h^w (A_{ij} )&u(x_1 + i x_2) \coloneqq\\
&\frac{1}{(2\pi h)^2} \int_{\R^2}\int_{\R^2} e^{\frac{i}{h} \ip{(x-y)}{\xi}} \tilde A_{ij} \left (\frac{x_1 + y_1}{2} + i \frac{x_2+y_2}{2}, \xi_1 + i \xi_2 \right ) u (y_1 + i y_2) \dd y \dd \xi
\end{align}
for $x \in \R^2$, $u\in  \mathcal{S}(\C)$, and $\tilde A_{ij}$ is the $\Gamma$-periodization of $A_{ij}$ in the $z$ variable. 
This integral is well defined for $u\in \mathcal{S}(\C)$.
By duality, it extends to a linear operator from $\mathcal{S}'(\C)$ to itself. 
Note that because each $\tilde A_{ij}$ is periodic on $\Gamma$, $\Op_h^w (A_{ij})$ maps the $\Gamma$-periodic elements of $\mathcal{S}'(\C)$ to the $\Gamma$-periodic elements of $\mathcal S'(\C)$. 

{We next want to show that quantizations of elements of $S^k(T^*(\C/\Gamma); \C^{2\times 2})$ are bounded maps between certain Sobolev spaces on $\C / \Gamma$.
This requires a brief digression into defining such Sobolev spaces.
}

Because $\Gamma \simeq \ZZ^2$, any element $u\in L^2 (\C / \Gamma)$ has a Fourier series representation
\begin{align}
u(z) = \sum _{k \in \Gamma^*} c_k e_k (z) \label{eq:566}
\end{align}
where for each $k\in \Gamma^*$
\begin{align}
e_k (z) \coloneqq (\Vol (\C / \Gamma ) )^{-1/2 }e^{\frac{i}{2} (z\bar k  + \bar z k)}.
\end{align}
These $e_k$ form an orthonormal basis so that
\begin{align}
\norm{u}_{L^2 (\C/ \Gamma )}^2 = \sum_{k\in \Gamma ^* } |c_k|^2. 
\end{align}

For $s\in \R$, define the $h$-dependent Sobolev space $H^s_h(\C /\Gamma)$ as the vector space of elements $u\in L^2 (\C / \Gamma)$ such that
\begin{align}
\norm{ \pi_{\C /\Gamma } \circ \Op_h^w (\jpb{\xi}^s) \circ \tau_{\C / \Gamma } u}_{L^2 (\C / \Gamma)} < \infty
\end{align}
where $\jpb{\xi} \coloneq (1+|\xi|^2)^{1/2}$, $\pi_{\C / \Gamma}\colon L^2_{\operatorname{loc}} (\C ) \to L^2 (\C / \Gamma)$ is the restriction to the fundamental domain of $\Gamma$ centered at the origin (which we abuse and call $\C/\Gamma$), $\tau_{\C / \Gamma} \colon L^2 (\C / \Gamma) \to L^2_{\operatorname{loc}}(\C)$ is the $\Gamma$-periodization of an element of $L^2 (\C / \Gamma) $ to $L^2_{\operatorname{loc}}(\C)$, and $\Op_h^w$ denotes the usual Weyl-quantization on $\R^{2}$ which is identified with $\C$.

If $u$ is written in the form \eqref{eq:566} and $s\in \R_{\ge 0}$, then $u\in H^s_h(\C/\Gamma)$ if and only if 
\begin{align}
\sum _{k\in \Gamma ^*} \abs{hk}^{2s} |c_k|^2 < \infty. \label{eq:577}
\end{align}
We define the $H^s _h(\C / \Gamma)$ norm as the square root of the left-hand side of \eqref{eq:577}.

\begin{prop}\label{prop:1325}
Suppose $k,s\in \R$ and $A\in S^k (T^* (\C/ \Gamma) ; \C^{2\times 2})$. 
Then $\pi_{\C /\Gamma} \circ \Op_h^w (A) \circ \tau_{\C /\Gamma}$ is a bounded map from $H^s_h (\C/ \Gamma; \C^2)$ to $H^{s-k}_h (\C/ \Gamma;\C^2)$.
\end{prop}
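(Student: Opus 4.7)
My strategy is to reduce the proposition to the standard semiclassical Weyl calculus on $\R^2$ in two steps. First, because $\Op_h^w$ and the Sobolev spaces $H^s_h(\C/\Gamma;\C^2)$ act entrywise on matrix-valued objects, it suffices to treat a scalar symbol $a\in S^k(T^*(\C/\Gamma))$. Second, by the very definition of $H^s_h(\C/\Gamma)$ via the operator $B_s \coloneq \Op_h^w(\jpb{\xi}^s)$, the boundedness $H^s_h \to H^{s-k}_h$ is equivalent to showing that
\[ P \coloneq B_{s-k}\circ\Op_h^w(\tilde a)\circ B_{-s} \]
descends to a bounded operator on $L^2(\C/\Gamma)$ via $\pi_{\C/\Gamma}\circ P\circ\tau_{\C/\Gamma}$, where $\tilde a$ is the $\Gamma$-periodization of $a$ in $z$.

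Since $\C/\Gamma$ is compact, all $z$-derivatives of $\tilde a$ are uniformly bounded in $z$, so $\tilde a$ belongs to the standard H\"ormander-type scalar symbol class on $\R^2\times\R^2$ with weight $\jpb{\zeta}^k$. The semiclassical symbol composition theorem on $\R^2$ then gives $P = \Op_h^w(b)$ for some $b$ in the analogous $S^0$-class, with $b$ still $\Gamma$-periodic in $z$ because $\jpb{\xi}^{\pm s}$ are $z$-independent. The proposition thus reduces to the following claim: if $b$ is a scalar symbol on $\R^2\times\R^2$ with all $(z,\zeta)$-derivatives uniformly bounded and $b$ is $\Gamma$-periodic in $z$, then $\pi_{\C/\Gamma}\circ\Op_h^w(b)\circ\tau_{\C/\Gamma}$ is bounded on $L^2(\C/\Gamma)$, with norm controlled by finitely many seminorms of $b$ (in particular, uniformly in $h$).

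To prove this claim, I would first note that $\Gamma$-periodicity of $b$ in $z$ implies $\Op_h^w(b)$ commutes with $\Gamma$-translations, hence sends $\Gamma$-periodic tempered distributions to $\Gamma$-periodic ones; this makes $\pi_{\C/\Gamma}\circ\Op_h^w(b)\circ\tau_{\C/\Gamma}$ well defined on $L^2(\C/\Gamma)$. I would then fix a smooth partition of unity $1 = \sum_{\gamma\in\Gamma}\chi(z-\gamma)^2$ with $\chi\in C_c^\infty(\C)$, decompose $\tau_{\C/\Gamma} u$ against this partition, and combine Calder\'on--Vaillancourt on $\R^2$ with the rapid off-diagonal decay of the Schwartz kernel of $\Op_h^w(b)$ (obtained by integration by parts in $\zeta$ using the $S^0$ seminorm bounds) to produce an almost-orthogonality estimate of Cotlar--Stein type between the localized pieces.

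The main technical obstacle is precisely this transfer from $\R^2$-boundedness to torus-boundedness: Calder\'on--Vaillancourt gives an operator bound on $L^2(\R^2)$, but $\tau_{\C/\Gamma} u$ is not in $L^2(\R^2)$, only locally square-integrable. The localization argument above circumvents this by exploiting the compact support of $\chi$, the $\Gamma$-periodicity of $b$, and the off-diagonal decay of the kernel of $\Op_h^w(b)$, reducing the required estimate to an $\ell^2(\Gamma)$-summability statement. Once this $L^2$ bound is in hand, the proposition follows from the fact that $B_{\pm s}$ furnish (by definition of $H^s_h(\C/\Gamma)$) isometric-type identifications between the relevant torus Sobolev spaces and $L^2(\C/\Gamma)$.
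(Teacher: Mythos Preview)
Your proposal is correct and follows essentially the same route as the paper: reduce to a scalar $S^0$ symbol by conjugating with $\Op_h^w(\jpb{\xi}^{\pm s})$, then handle the $L^2(\C/\Gamma)$ bound by decomposing over $\Gamma$-translates and using integration by parts in $\zeta$ for off-diagonal decay. The only cosmetic differences are that the paper uses sharp cutoffs $1_{\C/\Gamma}$ on the $y$-integral (writing $\Op_h^w(\tilde b)\tilde u = \sum_{k\in\Gamma} B_k$ with $B_k = 1_{\C/\Gamma} T_{-k}\Op_h^w(\tilde b)1_{\C/\Gamma}$) and a Schur test, whereas you use a smooth partition of unity and frame the summation as Cotlar--Stein; both packagings yield the same $\ell^1(\Gamma)$-summable $|k|^{-2N}$ bound.
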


\begin{proof}
Here we adapt the argument from \cite[Theorem 5.5]{zworski2012}.

\noindent \textbf{Step 1. Conjugate the symbol to reduce to $k = s = 0$ }

It suffices to work component-wise. Let $a\in S^k (T^* (\C / \Gamma))$. It suffices to show that 
\begin{align}
\pi_{\C /\Gamma}\Op_h^w (\jpb{\xi}^k) \Op_h^w (a) \Op_h^w  ( \jpb{\xi}^{-s}) \tau_{\C /\Gamma} \colon L^2 (\C / \Gamma ) \to L^2 (\C / \Gamma)
\label{main estimate}
\end{align}
is bounded. By the composition of Weyl-quantizations (see for instance \cite[Theorem 4.18]{zworski2012}), there exists $\tilde b \in S^0(\C) $ such that 
\begin{align}
\Op_h^w (\tilde  b) = \Op_h^w (\jpb{\xi}^k) \Op_h^w (\tau_{\C / \Gamma} a) \Op_h^w  (\jpb{\xi}^{-s}) .
\end{align}
{Because $\tau_{\C / \Gamma} a$ is $\Gamma$-periodic in $\C$ and is conjugated by symbols which are also $\Gamma$-periodic in $\C$, $\tilde b$ is $\Gamma$-periodic in $\C$.
Let $b = \pi_{\C / \Gamma} \tilde b \in S^0(T^* (\C / \Gamma))$.    
}

\noindent \textbf{Step 2. Decompose $\Op_h^w (\tilde b)$}

Let $u\in L^2 (\C /\Gamma)$ and define $\tilde u \coloneqq \tau_{\C / \Gamma}u \in L^2_{\operatorname{loc}}(\C)$. 
We now have, for $x\in \C / \Gamma$, that $\Op_h^w (\tilde b) \tilde u (x)$ equals
\begin{align}
 & \frac{1}{(2\pi h )^2} \int_{\R^2} \int_{\R^2} e^{\frac{i}{h} \ip{(x-y) }{\xi}} \tilde b \left (\frac{x_1 + y_1}{2} + i \frac{x_2+y_2}{2}, \xi_1 + i \xi_2 \right ) \tilde u (y_1 + i y_2) \dd y \dd \xi\\
&= \sum_{k\in \Gamma} B_k \tilde u(x) 
\end{align}
where $B_k \tilde u(x)$ is defined as
\begin{align}
  \frac{1}{(2\pi h )^2} \int_{\R^2} \int_{\C / \Gamma} e^{\frac{i}{h} \ip{(x-y+k) }{\xi}}  b \left (\frac{x_1 + y_1-k_1}{2} + i \frac{x_2+y_2-k_2}{2}, \xi_1 + i \xi_2 \right ) \tilde u (y_1 + i y_2) \dd y \dd \xi
\end{align}
{where $k = k_1 + i k_2$.}

By periodicity of $\tilde b$, we see for each $k\in \Gamma$:
\begin{align}
B_k  = 1 _{\C / \Gamma} T_{-k} \Op_h^w(\tilde b)1 _{\C / \Gamma} \label{eq:1364}
\end{align}
where $T_{-k} v (x): = v(x +k)$ {and $1_{\C / \Gamma}$ is the characteristic function on the fundamental domain of $\Gamma$. }

\noindent \textbf{Step 3. Bound each component $B_k$ for $k$ away from zero}

For each $N\in \N$, {$x$ and $y$ in the fundamental domain of $\Gamma$, and $|k| > 8\pi$ (so that $|x -y  +k| \neq 0$)}, we can write
\begin{align}
e^{\frac{i}{h} \ip{x-y+k}{\xi}} = h^{2N} |x- y+k |^{-2N} |D_\xi|^{2N} e^{\frac{i}{h} \ip{x-y+k}{\xi}}
\end{align}
so that by integration by parts (using that the Fourier transform of compactly supported functions will be Schwartz in $\xi$)
\begin{align}
B_k = 1_{\C /\Gamma} T_{-k} \tilde B_k 1 _{\C / \Gamma}
\end{align}
with $\tilde B_k u(x)$ defined as
\begin{align}
\frac{1}{(2\pi h)^2} \int_{\R^2} \int_{\R^2} e^{\frac{i}{h} \ip{x-y}{\xi}} \chi(x-k) \chi(y ) h^{2N} |x- y|^{-2N} (|D_\xi|^{2N} \tilde b) \left (\frac{x+y}{2}, \xi \right ) \tilde u(y) \dd y \dd \xi
\end{align}
with $\chi  \in C_0^\infty(\R^2)$ identically $1$ near the fundamental domain of $\Gamma$ (identified with $\R^2$).
{We can then apply the Schur test to get that there exists a $C>0$ independent of $k$ such that
\begin{align}
    \norm{\tilde B_k}_{L^2 (\R^2)\to L^2 (\R^2)} \le C h^{2N}|k|^{-2N}
\end{align}
so that $\norm{B_k}_{L^2 (\C / \Gamma) \to L^2 (\C / \Gamma)}= \mathcal{O}(h^{\infty}|k|^{-\infty})$.}

{From \eqref{eq:1364}, $B_k$ is bounded from $L^2(\C / \Gamma)$ to $L^2 (\C / \Gamma)$ for $|k| \le 8\pi$.
This, combined with the $L^2$ estimate for $B_k$ away from zero gives  \eqref{main estimate}.
}\end{proof}

We therefore have the following quantization.

\begin{defi}[Quantization of $S^k(T^*(\C / \Gamma); \C^{2\times 2})$]\label{def:quantization_thing}
For $k\in \R$ and $$A \in S^k(T^*(\C / \Gamma); \C^{2\times 2}),$$ we define 
\begin{align}
    \Op_h(A) \coloneqq \pi_{\C / \Gamma }\circ \Op_h^w (A) \circ \tau_{\C / \Gamma} 
\end{align} 
\end{defi}
{We note that for $A \in S^k(T^*(\C / \Gamma); \C^{2\times 2})$, we get by Proposition \ref{prop:1325} that $$\Op_h(A) \colon H^k (\C /\Gamma ; \C^2 )\to L^2 (\C / \Gamma ; \C^2).$$}

For $k \in \R$, matrix-valued symbols $A, A_j \in S^k (T^*(\C/\Gamma); \C^{2\times 2})$ ($j\in \Z_{\ge0}$) depending on $h$, we write
\begin{align}
A \sim \sum _0^\infty h^j A_j \label{eq:1231}
\end{align}
in $S^k (T^* (\C / \Gamma );\C^{2\times 2}))$ if for all $J \in \N$
\begin{align}
A - \sum_{j=0}^J h^j A_j \in  h^{J+1} S^k (T^*(\C / \Gamma) ; \C^{2\times 2}).
\end{align}

\begin{defi}[Principal symbol]\label{def:principal symbol}
    If $A \in S^k(T^*(\C / \Gamma ) ; \C^{2\times 2})$ has an asymptotic expansion as in \eqref{eq:1231}, then its \textit{principal symbol} is the equivalence class of matrix-valued functions $A_0$ with the equivalence relation: \begin{align}
        A\sim B \iff A - B \in h S^k(T^* (\C / \Gamma) ; \C^{2\times 2}).
    \end{align}
\end{defi}

\begin{prop}[Composition]\label{claim:composition}
Suppose $k,\ell \in \R$,
\begin{align}
 A \in S^k (T^* (\C /\Gamma) ; \C^{2\times 2}) && \text{and}&& B \in S^\ell (T^* (\C /\Gamma) ; \C^{2\times 2}).
\end{align}
Then there exists a $C \in S^{k + \ell} (T^* (\C /\Gamma) ; \C^{2\times 2})$ such that:
\begin{align}
\Op_h (A) \circ \Op_h (B) = \Op_h (C)
\end{align}
and
\begin{align}
C \sim AB + \sum _{1}^\infty h^j   \mat{(C_{11})_j & (C_{12})_j \\ (C_{21})_j & (C_{22})_j}.
\end{align}
\end{prop}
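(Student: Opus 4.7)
The plan is to reduce composition on $\C/\Gamma$ to the well-established Weyl composition on $\R^2$ via the periodization/restriction pair $(\tau_{\C/\Gamma}, \pi_{\C/\Gamma})$, and to treat the matrix structure componentwise. Since $\Op_h$ acts entry-by-entry and ordinary matrix multiplication is a sum of scalar products, it suffices to prove: for scalar $a \in S^k(T^*(\C/\Gamma))$ and $b \in S^\ell(T^*(\C/\Gamma))$, there exists $c \in S^{k+\ell}(T^*(\C/\Gamma))$ with $\Op_h(a)\Op_h(b) = \Op_h(c)$ and $c \sim ab + \sum_{j\geq 1} h^j c_j$.

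The core technical step is an intertwining observation. For any $\gamma \in \Gamma$, the translation $T_\gamma v(x) = v(x+\gamma)$ on $L^2_{\mathrm{loc}}(\C)$ satisfies
\begin{equation*}
T_\gamma \Op_h^w(\tau_{\C/\Gamma}b) T_{-\gamma} = \Op_h^w\bigl((T_\gamma \otimes \mathrm{id})(\tau_{\C/\Gamma}b)\bigr) = \Op_h^w(\tau_{\C/\Gamma}b),
\end{equation*}
by a direct change of variables in the Weyl integral and the $\Gamma$-periodicity of $\tau_{\C/\Gamma}b$ in the $z$-variable. Consequently, if $u \in H^\ell_h(\C/\Gamma)$ and $\tilde u = \tau_{\C/\Gamma}u$, then $\Op_h^w(\tau_{\C/\Gamma}b)\tilde u$ is itself $\Gamma$-periodic, so $\tau_{\C/\Gamma}\pi_{\C/\Gamma}$ acts as the identity on it. This yields the clean identity
\begin{equation*}
\Op_h(a)\Op_h(b) = \pi_{\C/\Gamma} \Op_h^w(\tau_{\C/\Gamma}a)\Op_h^w(\tau_{\C/\Gamma}b)\tau_{\C/\Gamma},
\end{equation*}
reducing the composition problem to the usual Weyl calculus on $\R^2$.

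Now apply the classical Weyl composition result (e.g.\ \cite[Theorem 4.18]{zworski2012}): there is $\tilde c \in S^{k+\ell}(\R^2 \times \R^2)$ with $\Op_h^w(\tau_{\C/\Gamma}a)\Op_h^w(\tau_{\C/\Gamma}b) = \Op_h^w(\tilde c)$ and the Moyal expansion
\begin{equation*}
\tilde c(x,\xi) \sim \sum_{j=0}^\infty \frac{1}{j!}\left(\frac{ih}{2}\sigma(D_x,D_\xi;D_y,D_\eta)\right)^j (\tau_{\C/\Gamma}a)(x,\xi)(\tau_{\C/\Gamma}b)(y,\eta)\Big|_{(y,\eta)=(x,\xi)}.
\end{equation*}
Every term in this sum is a finite linear combination of products of derivatives of $\tau_{\C/\Gamma}a$ and $\tau_{\C/\Gamma}b$, so each term is $\Gamma$-periodic in $x$; by Borel summation, $\tilde c$ may be chosen $\Gamma$-periodic in $x$, and we set $c = \pi_{\C/\Gamma}\tilde c \in S^{k+\ell}(T^*(\C/\Gamma))$, whose leading term is $ab$. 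Combining with the intertwining identity and reapplying the cancellation of $\tau_{\C/\Gamma}\pi_{\C/\Gamma}$ on the $\Gamma$-periodic symbol $\tilde c$ gives $\Op_h(a)\Op_h(b) = \Op_h(c)$.

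The main obstacle is really only the intertwining in the second step, i.e.\ showing that $\pi_{\C/\Gamma}$ may be dropped whenever it is preceded by the Weyl quantization of a $\Gamma$-periodic symbol acting on a $\Gamma$-periodic function. Once this is in hand, the asymptotic expansion and the symbol-class membership are immediate consequences of the scalar Moyal calculus on $\R^2$, and the passage to $2\times 2$ matrices is purely algebraic.
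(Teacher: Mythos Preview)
Your approach is essentially the same as the paper's: both reduce to the scalar Weyl calculus on $\R^2$, invoke the standard composition theorem there, and then check that the resulting symbol $\tilde c$ is $\Gamma$-periodic so that it descends to $\C/\Gamma$. One small wrinkle: you write that ``by Borel summation, $\tilde c$ may be chosen $\Gamma$-periodic,'' but $\tilde c$ is not chosen---it is the \emph{unique} symbol satisfying $\Op_h^w(\tilde c) = \Op_h^w(\tau_{\C/\Gamma} a)\Op_h^w(\tau_{\C/\Gamma} b)$, since Weyl quantization is injective on these symbol classes, and Borel summation of the asymptotic series only produces a symbol agreeing with $\tilde c$ up to $O(h^\infty)$. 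The paper handles periodicity by appealing directly to the exact Moyal product formula (as in \cite[Theorem~4.12]{zworski2012}), which visibly preserves $\Gamma$-periodicity in $z$; alternatively, your own intertwining identity already delivers this, since $T_\gamma \Op_h^w(\tilde c) T_{-\gamma} = \Op_h^w(\tilde c)$ together with injectivity forces $T_\gamma \tilde c = \tilde c$.
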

\begin{proof}
By standard results about the composition of Weyl operators (see for instance \cite[\S 4.3]{zworski2012}), we have that
\begin{align}
\Op_h^w (\tilde A) \circ \Op_h^w (\tilde B) = \Op _h ^w (\tilde C)
\end{align}
where $\tilde C  = \tilde A  \# \tilde B = \tilde A \tilde B + \mathcal{O}(h) \in S^{k + \ell } (T^* (\C ) ; \C^{2\times 2})$, {and $\tilde A$ and $\tilde B$ denotes the periodization in the $z$ variable of all matrix components.
It is easy to verify from the formula for $a\#b$ (see, for instance, \cite[Theorem 4.12]{zworski2012}) that if $a$ and $b$ are $\Gamma$ periodic in $z$, then $a\#b$ is $\Gamma$ periodic in $z$.
Therefore, we can restrict $\tilde C$ to the fundamental domain of $\Gamma$ in the variable $z$, which we denote by $C \in S^{k+ \ell } (T^*(\C / \Gamma) ; \C^{2\times 2})$.
}

If $u\in H^{k+ \ell} (\C / \Gamma ; \C^2)$ and $\tilde u \coloneqq \pi_{\C / \Gamma}^{-1} u$, then:
\begin{align}
\Op_h(A) \circ \Op_h(B) u &= (\Op_h^w (A)\Op_h^w(B) \tilde u  )|_{\C /\Gamma}\\
&= (\Op_h ^w(C) \tilde u )|_{\C / \Gamma} = \Op_h(C) u. 
\end{align}
\end{proof}

\begin{prop}[Parametrix construction]\label{claim.parametrix}
If $k\in \R$ and $F \in S^k (T^*(\C / \Gamma) ; \C^{2\times 2})$ with principal symbol
\begin{align}
\mat{A(z,\zeta)  & B(z,\zeta) \\ C(z,\zeta) & D(z,\zeta)}
\end{align}
such there exists a $c_0> 0 $ with $A(z,\zeta)D(z,\zeta) - B(z,\zeta)C(z,\zeta) > c_0$.
Then there exists $G \in S^{-k} (T^*(\C / \Gamma) ; \C^{2\times 2})$ such that
\begin{align}
\Op_h (G) \circ \Op_h(F) = \Op_h (F) \circ \Op_h (G) = O(h^{\infty})
\end{align}
and the principal symbol of $G$ is
\begin{align}
\mat{A(z,\zeta)  & B(z,\zeta) \\ C(z,\zeta) & D(z,\zeta)}^{-1}.
\end{align}
\end{prop}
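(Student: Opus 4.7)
The plan is to carry out the standard elliptic parametrix construction, adapted to the matrix-valued periodic setting provided by Proposition \ref{claim:composition}.

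First, I will construct a rough right parametrix by pointwise matrix inversion. Writing $\sigma_0(F) = \mat{A & B \\ C & D}$, Cramer's rule provides the candidate
\begin{align}
    G_0 = \frac{1}{AD-BC}\mat{D & -B \\ -C & A}.
\end{align}
The uniform lower bound $AD-BC \ge c_0$ ensures that this is smooth, and differentiating via the quotient rule, using the symbol estimates on $A,B,C,D \in S^k$ and the uniform non-degeneracy of $AD-BC$, I will show $G_0 \in S^{-k}(T^*(\C/\Gamma); \C^{2\times 2})$.

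Next, I will iteratively correct $G_0$ to higher orders in $h$. By Proposition \ref{claim:composition}, $\Op_h(F)\Op_h(G_0) = \Op_h(F \# G_0)$ with $F \# G_0$ having principal symbol $\sigma_0(F)\, G_0 = I_2$, so
\begin{align}
    \Op_h(F)\Op_h(G_0) = I + h\, \Op_h(R_1)
\end{align}
for some $R_1 \in S^0$. Setting $G_1 := -G_0 R_1 \in S^{-k}$ and reapplying Proposition \ref{claim:composition} cancels the order-$h$ term up to an $h^2$ error. Iterating, I produce a sequence $G_j \in S^{-k}$ satisfying
\begin{align}
    \Op_h(F)\Op_h\left(\sum_{j=0}^N h^j G_j\right) = I + h^{N+1}\, \Op_h(R_{N+1}),
\end{align}
where at each step $G_{j+1} = -G_0 R_{j+1}$ and $R_{j+1} \in S^0$ is the remainder extracted from the previous composition.

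I will then invoke a standard Borel summation argument (truncating each $G_j$ in frequency on a scale depending on $j$) to obtain a single $G \in S^{-k}$ with $G \sim \sum_{j \ge 0} h^j G_j$ in the sense of \eqref{eq:1231}. This gives $\Op_h(F)\Op_h(G) = I + \mathcal{O}(h^\infty)$. The same scheme with left multiplication produces a left parametrix $\tilde G \in S^{-k}$, and the usual comparison
\begin{align}
    \tilde G = \tilde G\,\#\,(F\,\#\,G) + \mathcal{O}(h^\infty) = G + \mathcal{O}(h^\infty)
\end{align}
(interpreted via Proposition \ref{claim:composition}) shows $G$ is a two-sided parametrix with the prescribed principal symbol.

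The main obstacle I anticipate is the symbol-class bookkeeping in Step~1: verifying that the entries of $G_0$ and all of their derivatives satisfy the $S^{-k}$ estimates, rather than merely $S^0$ estimates, requires combining the hypothesis $AD-BC \ge c_0$ with the ellipticity of the determinant at infinity (which in the TBG application is what the proof of Claim \ref{claim:existenceofrandomtbg} verifies). Everything after Step~1 is essentially automatic from Proposition \ref{claim:composition}, with $\Gamma$-periodicity preserved throughout by the $\pi_{\C/\Gamma} \circ (\cdot) \circ \tau_{\C/\Gamma}$ structure of the quantization.
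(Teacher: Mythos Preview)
Your proposal is correct and is exactly the ``usual parametrix construction'' the paper invokes: the paper's own proof is a one-line deferral to the standard argument (citing Grigis--Sj\"ostrand) once the composition rule of Proposition~\ref{claim:composition} is in hand, and you have simply written that argument out. Your flagged concern about the $S^{-k}$ estimate for $G_0$ is legitimate---the bare hypothesis $AD-BC>c_0$ does not by itself force $(AD-BC)^{-1}\in S^{-2k}$---but, as you note, in the only place this proposition is used (Claim~\ref{claim:existenceofrandomtbg}) the determinant is genuinely elliptic of order $2k$, so the issue does not arise.
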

\begin{proof}
Because we have a composition rule for this symbol class (Proposition \ref{claim:composition}), the claim follows by an identical argument as the usual parametrix construction, see, for instance, \cite[Theorem 4.1]{Grigis}.
\end{proof}

\printbibliography

\end{document}